\theoremstyle{plain}
\renewcommand{\theequation}{\arabic{section}.\arabic{equation}}
\renewcommand\thefigure{\thesection.\@arabic\c@figure}
\newtheorem{thm}{\bf Theorem}
\newtheorem{cor}{\bf Corollary}
\newtheorem{prop}{Proposition}[section]
\theoremstyle{lemma}
\newtheorem{lm}{\bf Lemma}[section]
\theoremstyle{remark}
\newtheorem{rem}{\bf Remark}[section]
\newtheoremstyle{example}{}{}{}{}{\bfseries}{\smallskip}{\newline}{}
\theoremstyle{example}
\newtheorem{example}{Example}
\def \epsilon {{\varepsilon}}
\definecolor{bgblue}{rgb}{0.04,0.39,0.54}
\definecolor{lired}{rgb}{0.3, 0.0, 0.0}
\definecolor{ligreen}{rgb}{0.0, 0.3, 0.0}
\definecolor{liblue}{rgb}{0.9, 1.0, 1.0}
\definecolor{gray}{rgb}{0.6, 0.6, 0.6}
\definecolor{sky}{rgb}{0.3, 1.0, 1.0}
\definecolor{bunhong}{rgb}{1.0, 0.3, 1.0}
\definecolor{yellow}{rgb}{0.97, 1, 0.0}
\definecolor{liyellow}{rgb}{0.9, 0.8, 0.0}
\definecolor{cengse}{rgb}{0.00,0.40,0.29}
\newcommand{\mathleft}{\@fleqntrue\@mathmargin0pt}
\newcommand{\bs}[1]{\boldsymbol{#1}}
\renewcommand \wedge \times
\begin{document}
\bibliographystyle{plain}

{\title[Exact divergence-free spectral method for MHD system] {An exact divergence-free spectral method for incompressible and resistive magneto-hydrodynamic equations in two and three dimensions}
\author[
  L. Qin,\,    H. Li\,  $\&$\,  Z. Yang
  ]{
	  \;\; Lechang Qin${}^{1}$,   \;\;  Huiyuan Li${}^{2}$ \;\; and\;\; Zhiguo Yang${}^{*,1}$
	  }
\thanks{ \noindent  ${}^{*}$ Corresponding author.
  \\
\noindent${}^{1}$ School of Mathematical Sciences, MOE-LSC and CMA-Shanghai, Shanghai Jiao Tong University, Shanghai 200240, China. Email: tony$\_$qin0225@sjtu.edu.cn (L. Qin), yangzhiguo@sjtu.edu.edu (Z. Yang).
  \\
  \noindent${}^{2}$ State Key Laboratory of Computer Science/Laboratory of Parallel Computing, Institute of
Software, Chinese Academy of Sciences, Beijing 100190, China. Emails: huiyuan@iscas.ac.cn (H. Li).
	  }
	  }
\keywords{Magneto-hydrodynamic equations, spectral method, generalised Jacobi polynomial, divergence-free condition, unconditional stability.} \subjclass[2000]{65N35, 65N22, 65F05, 35J05}

\begin{abstract}
In this paper, we present exact divergence-free spectral method for solving the incompressible and resistive magneto-hydrodynamic (MHD) equations in two and three dimensions, as well as the efficient solution algorithm and unconditionally energy-stable fully-discretized numerical schemes. We introduce new ideas of constructing two families of exact divergence-free vectorial spectral basis functions on domains diffeomorphic to squares or cubes. These bases are obtained with the help of orthogonality and derivative relation of generalised Jacobi polynomials, several de Rham complexes, as well as the property of contravariant Piola transformation. They are well-suited for discretizing the velocity and magnetic fields, respectively, thereby ensuring point-wise preservation of the incompressibility condition and the magnetic Gauss's law.  With the aid of these bases,  we propose a family of exact divergence-free implicit-explicit $k$-step backward differentiation formula (DF-BDF-$k$) fully-discretized schemes for the MHD system. These schemes naturally decouple the pressure field from the velocity field.  Consequently, the stability of the space-time fully-discretized numerical schemes based on these bases are significantly enhanced. These schemes exhibit unconditional stability for $k=1,2$, and demonstrate exceptional stability and accuracy for $k=3,4$, verified with extensive numerical results for long time simulations using large time step sizes.  Moreover, these schemes can be solved efficiently as they require only the solutions of two symmetric positive-definite equations, instead of solving a coupled saddle-point system. Furthermore, we present efficient solution algorithms for these two decoupled equations for the velocity and magnetic fields, respectively, by exploiting the sparsity and structure of the resultant linear algebraic systems. Ample numerical examples in two and three dimensions are provided to demonstrate  the distinctive accuracy, efficiency and stability of our proposed method. 
\end{abstract}
\maketitle

\section{Introduction}
We are devoted to an exact divergence-free spectral approximation of the incompressible and resistive magnetohydrodynamic (MHD) system governing the interaction between the electrically conducting fluid and the magnetic field:
\begin{subequations}\label{eq: mhd}
\begin{align}
&\partial_t \bs u+\bs u\cdot \nabla \bs u -\nu \nabla^2 \bs u+{\rho}^{-1}\nabla p -(\mu \rho)^{-1}(\nabla \times \bs B) \times \bs B= {\bs f}, \label{eq: umom}\\
& \partial_t \bs B-\nabla \times (\bs u\times \bs B)+\eta \nabla \times(\nabla \times \bs B)=\bs 0, \label{eq: Bmom}\\
& \nabla \cdot \bs u=0, \label{eq: divu}\\
& \nabla \cdot \bs B=0, \label{eq: divB}
\end{align}
\end{subequations}
where $\bs u$, $p$ and $\bs B$ and $\bs f$ are the velocity, pressure, magnetic and certain body force fields, respectively. $\rho$ is the density of the fluid, $\nu$ is the kinematic viscosity, $\mu$ is the magnetic permeability, $\eta={1}/{(\mu \sigma)}$ is the magnetic diffusivity coefficient and $\sigma$ is the electric conductivity, which are all assumed to be constants throughout the paper. This system is prescribed on domain $\Omega \subset \mathbb{R}^d,$ $d\in \{ 2, 3\}$ with boundary $\partial \Omega$ and supplemented with the following
initial and boundary conditions
\begin{equation}\label{eq: initial}
\bs u(\bs x,t=0)=\bs u_{\rm in}(\bs x),\quad  \bs B(\bs x,t=0)=\bs B_{\rm in}(\bs x)\;\; {\rm in}\;\; \Omega,
\end{equation}
and
\begin{equation}\label{eq: mhdbc}
\bs u|_{\partial \Omega}=\bs 0,\quad \bs n \cdot \bs B|_{\partial \Omega}=0,\quad \bs n \times (\nabla \times \bs B)|_{\partial \Omega}=\bs 0\;\;\;{\rm at}\;\;\partial \Omega,
\end{equation}
where $\bs n$ is the outward unit normal vector along $\partial \Omega$. The initial data needs to satisfy the compatible conditions for the well-posedness of the problem
\begin{equation}\label{eq: compatible}
\nabla \cdot \bs u_{\rm in}=\nabla \cdot \bs B_{\rm in}=0, \quad \bs u_{\rm in}|_{\partial \Omega}=\bs 0,\quad \bs n \cdot \bs B_{\rm in}|_{\partial \Omega}=0, \quad \bs n \times \nabla \times \bs B_{\rm in}|_{\partial \Omega}=\bs 0\;\; {\rm at}\;\; \partial \Omega.
\end{equation}

The challenges of numerically simulating the above problem arise from at least three-folds: (i) the divergence-free conditions (i.e. $\nabla \cdot \bs u=\nabla \cdot \bs B=0$); (ii) the saddle-point nature of the system caused by the Lagrange multiplier $p$ to explicitly impose the constraint $\nabla \cdot \bs u=0$; (iii) the nonlinear coupling of the system. On the one hand, one needs to design structure-preserving numerical discretization such that the discrete divergence-free constraints are preserved point-wisely and lead to efficient and accurate solution algorithm. On the other hand, it is crucial to adopt stable scheme suitable for long-time simulations while keeping the resultant discrete linear system easy to solve at each time step. In this paper, we shall address all these issues. 

It is noteworthy that though the divergence-free constraints for the velocity and magnetic fields appear the same at the first glance, they possess different physical meanings and fulfill different roles in the constrained PDE system.  Consequently, they have to be approximated differently in numerical algorithms.  $\nabla \cdot \bs u=0$ encodes the incompressibility of the fluid or, equivalently, the mass conservation, and is imposed explicitly via the Lagrange multiplier $p$ (cf. \cite{white2011fluid}). It is well-known that a pair of approximation space for the velocity and pressure field that does not satisfy the  discrete inf-sup condition will lead to severe numerical instability \cite{boland1983stability}. While the magnetic Gauss's law $\nabla \cdot \bs B=0$ represents the non-existence of magnetic monopoles in nature and is implicitly implied by equation \eqref{eq: Bmom} given that $\nabla \cdot \bs B^0=0$. As is shown in \cite{brackbill1980effect}, the violation of $\nabla \cdot \bs B=0$ induces an unphysical parallel force such that even a tiny deviation in satisfying this constraint will lead to large numerical error in long-time simulation of the MHD system. 

Much effort has been devoted to developing numerical algorithms that preserve these divergence-free constraints for the velocity and magnetic fields at the discrete level.  The common methods used for ensuring divergence-free solutions include, but are not limited to, the divergence-cleaning technique \cite{brackbill1980effect}, the scalar or vector potential method \cite{cyr2013new, jardin2010computational, shadid2010towards}, the constrained transport method \cite{evans1988simulation,stone1992zeus}, the Lagrange multiplier method \cite{Monk03} and the mixed finite element method \cite{gawlik2022finite, hiptmair2018fully, hu2017stable, hu2019structure,LAAKMANN2023112410}. These methods often involve solving additional equations to project the numerical solution onto a divergence-free space or introducing new variables to augment or reformulate the original system. As a result, equations with higher spatial order or large coupled systems need to be solved.
In contrast to the aforementioned methods, the method of constructing divergence-free bases offers a straightforward approach to preserving the divergence-free constraint without any additional effort. 
    In \cite{ye1997discrete}, a weakly divergence-free finite element basis consisting of 9 velocity nodes per element was constructed for the Stokes equation. Guo and Jiao (\cite{guo2016spectral}) proposed an abstract framework for constructing divergence-free basis functions for $n$-dimensional Navier-Stokes equation. Cai et. al in \cite{cai2013divergence} constructed a divergence-free $\bs H({\rm div})$ hierarchical basis for the MHD system. Local divergence-free methods based on discontinuous-Garlerkin (DG) or weak-Galerkin (WG) frameworks were also proposed in \cite{hiptmair2018fully, li2005locally, mu2018discrete}. To the best of the authors' knowledge, there is no exact divergence-free spectral approximation in existing literatures that can preserve the incompressibility condition and  magnetic Gauss's law point-wisely at the same time.   

In this paper, we propose a systematic method adhering to de Rham complexes to construct exact divergence-free $\bs H^1$- and $\bs H({\rm div})$-conforming spectral basis functions on domains diffeomorphic to squares or cubes. This is accomplished through the orthogonal property and derivative relation of the generalized Jacobi polynomials and the property of contravariant Piola transformation. Due to the orthogonality of the generalized Jacobi polynomials, the proposed bases possess a notable merit that the coefficient matrices of the resultant linear algebraic systems are either highly sparse and band-limited or well-structured. By exploiting the properties of these matrices, we can propose corresponding efficient algorithms. 

With the help of these bases, the stability of the space-time fully-discretized numerical schemes are significantly enhanced.
We propose a family of exact divergence-free BDF-$k$ fully-discretized schemes, i.e. DF-BDF-$k$ schemes for the MHD system. These schemes exhibit unconditional stability for $k=1,2$, and demonstrate distinctive feature of stability and accuracy for $k=3,4$, with ample numerical results for long time simulations using large time step sizes. The proposed schemes have drawn inspirations from the energy-stable schemes proposed in \cite{choi2016efficient}. However, a naive extension of the schemes therein to $k\geq 3$ will cause severe stability issue, without the proposed divergence-free bases.  As for efficiency, the proposed methods are linear-coupled schemes and can be solved quite efficiently by a sub-iteration method. In each iteration step, one only needs to solve two decoupled linear equations with constant coefficients for the velocity and magnetic fields, respectively. Numerical experiments show that it often needs only a few sub-iterations (1$\sim$3) to converge for $k=3,4$, even for large time step sizes.

The rest of the paper is organized as follows. In Section \ref{sect: divfree}, we present a systematic way to construct exact divergence-free $\bs H^1$-and $\bs H({\rm div})$-conforming spectral bases in two and three dimensions. The associated efficient solution algorithms are presented. In Section \ref{sect: scheme}, we propose a family of exact divergence-free implicit-explicit BDF-$k$ fully-discretized schemes for the MHD system. The unconditional stability of the scheme for $k=1,2$ is rigorously proved. In Section \ref{sect: numer}, we provide extensive numerical results to demonstrate the accuracy, efficiency and exceptional stability of the proposed methods. Section \ref{sect: conclude} then concludes the discussions with some closing remarks. 

\section{Construction of the divergence-free spectral bases}\label{sect: divfree} 
In this section, we propose spectral  approximations for velocity and magnetic fields such that divergence-free constraints are held exactly at the discrete level. We restrict our attention to computational domains diffeomorphic to squares or cubes, i.e. there exists a diffeomorphism $\mathcal{F}: {\bs \xi} \in [-1,1]^d\rightarrow  {\bs x}\in \Omega \subset \mathbb{R}^d$ for $d=2,3$. We start by introducing some basic notations of Sobolev spaces and briefly review the definition of generalized Jacobi polynomials, as well as their orthogonality and derivative properties.
\subsection{Notations for some Sobolev spaces}
Let $\bs L^2(\Omega,\mathbb{R}^n)$ be the space of square integral $n$-dimensional vector functions on domain $\Omega$ with the associated inner product $(\cdot, \cdot)_{\Omega}$  and norm $\|\cdot \|_{0,\Omega}$ as usual. $\bs H^m(\Omega, \mathbb{R}^n)=\big\{\bs u \in \bs L^2(\Omega,\mathbb{R}^n)| \partial_{x_1}^{\alpha_1}\cdots\partial _{x_d}^{\alpha_d} (\bs u)_j \in  L^2(\Omega, \mathbb{R}), \;  \sum_{i=1}^d|\alpha_i|\leq m, j=1,\cdots,n  \big\}$ and its associated norm $\| \cdot \|_{m,\Omega}$ are defined as in \cite{Adam75}. For clarity, we denote $L^2(\Omega,\mathbb{R})=\bs L^2(\Omega,\mathbb{R})$ and $H^m(\Omega,\mathbb{R})=\bs H^m(\Omega,\mathbb{R})$ for scalar functions. We omit the reference to $\mathbb{R}^n$ and $\Omega$ if no ambiguity occurs. 

In the two-dimensional Cartesian coordinates,  for any vector function $\bs v(\bs x)=(v_1(\bs x),v_2(\bs x))^{\intercal}$ and scalar function $v(\bs x)$,  the curl operator for vector and scalar functions, and the divergence operator are defined respectively as
\begin{equation}\label{eq: curldiv2d}
\nabla \times \bs v=\frac{\partial v_2}{\partial x_1}- \frac{\partial v_1}{\partial x_2},\qquad \nabla \times v=\Big( \frac{\partial v}{\partial x_2}, -\frac{\partial v}{\partial x_1}  \Big)^{\intercal},\qquad \nabla \cdot \bs v=\frac{\partial v_1}{\partial x_1}+ \frac{\partial v_2}{\partial x_2}.
\end{equation}
While for the three-dimensional Cartesian coordinates, the curl and divergence operators for an arbitrary vector $\bs v(\bs x)=\big(v_1(\bs x),v_2(\bs x),v_3(\bs x) \big)^{\intercal}$ take the form
\begin{equation}\label{eq: curl3d}
\nabla \times \bs v=\Big( \frac{\partial v_3}{\partial x_2}- \frac{\partial v_2}{\partial x_3}, \; \frac{\partial v_1}{\partial x_3}-\frac{\partial v_3}{\partial x_1},\; \frac{\partial v_2}{\partial x_1}- \frac{\partial v_1}{\partial x_2} \Big)^{\intercal},\qquad \nabla \cdot \bs v=\frac{\partial v_1}{\partial x_1}+\frac{\partial v_2}{\partial x_2}+\frac{\partial v_3}{\partial x_3}.
\end{equation}
Accordingly, we denote the spaces  
\begin{equation}\label{eq:Hcurl}
\begin{aligned}
 \bs{H}({\rm curl},\Omega)&=
\begin{cases}
 \Big\{\bs v \in \bs L^2(\Omega,\mathbb{R}^2)\big| \nabla \times \bs v\in L^2(\Omega),\;\; \Omega \in \mathbb{R}^2 \Big \},\\[5pt]
 \Big\{\bs v \in \bs L^2(\Omega,\mathbb{R}^3)\big| \nabla \times \bs v\in \bs L^2(\Omega,\mathbb{R}^3),\;\; \Omega \in \mathbb{R}^3 \Big \},
\end{cases} \\
\bs{H}({\rm div},\Omega)&=\Big\{\bs v \in \bs L^2(\Omega)\big|\, \nabla \cdot \bs v\in L^2(\Omega)  \Big \}.
\end{aligned}
\end{equation}
The corresponding Sobolev spaces indowed with essential boundary conditions are defined as $ \bs H^1_0(\Omega)=\big\{ \bs v\in \bs H^1(\Omega),\; \bs v|_{\partial \Omega}=\bs 0 \big\}$ and
\begin{equation}\label{eq:Hcurl0}
\bs H_0({\rm curl},\Omega)=\big\{\bs v\in \bs H({\rm curl},\Omega),\; \bs n \times \bs v |_{\partial \Omega}=0 \big\},\;\;  \bs H_0({\rm div},\Omega)=\big\{\bs v\in \bs H({\rm div},\Omega),\;\; \bs n \cdot \bs v |_{\partial \Omega}=0 \big\}.
\end{equation}
Besides, we denote following Sobolev spaces  with divergence-free constraints
\begin{equation}\label{eq:div0}
\bs H^1_0({\rm div}0,\Omega)=\big\{ \bs v\in \bs H_0^1(\Omega),\;\; \nabla \cdot \bs v=0 \big\},\quad \bs H_0({\rm div}0,\Omega): =\big \{ \bs v \in \bs H_0({\rm div},\Omega), \;\;\nabla \cdot \bs v=0 \big \}.
\end{equation}
For clarity, we sometimes denote $D_{\bs \xi}$, $D_{\bs x}$ ($D=\nabla \times$ or $\nabla \cdot$ ) as differential operators on the reference and physical domains, respectively. 

\subsection{Generalised Jacobi polynomials} 
The classical Jacobi polynomials $P_{n}^{(\alpha,\beta)}(\xi)$ of order $n$ with hyper parameters $\alpha,\beta>-1$ are orthogonal polynomials  with respect to the weight function $\omega^{\alpha, \beta}(\xi)=(1-\xi)^{\alpha}(1+\xi)^{\beta}$ on $\Lambda=(-1,1)$, that is
\begin{equation}\label{GUPeq9}
\int_{-1}^1P_n^{(\alpha,\beta)}(\xi)P_m^{(\alpha,\beta)}(\xi)\omega^{\alpha, \beta}(\xi)\,{\rm d}\xi=\frac{2^{\alpha+\beta+1}\Gamma(n+\alpha+1)\Gamma(n+\beta+1)}{(2n+\alpha+\beta+1)\Gamma(n+1)\Gamma(n+\alpha+\beta+1)}\delta_{nm},
\end{equation}
where $\delta_{nm}$ is the Kronecker delta symbol and $\Gamma(\cdot)$ is the Gamma function. 
The classical Jacobi polynomials can be generalized to cases with general $\alpha,\beta\in\mathbb{R}$, that is, the generalised Jacobi functions (cf.\,\cite{GUO20091011, ShenTangWang2011}). The flexibility offered by the parameters $\alpha,\beta$ allows us to design suitable bases for exact preservation of the divergence-free constraint.  Hereafter, we adopt the following definition of generalized Jacobi polynomials of parameters $\alpha=\beta=-1,-2$:
\begin{equation}\label{eq: gJp}
P_n^{(-1,-1)}(\xi)=
\begin{cases}
(1-\xi)/2,\;\; & n=0,\\[2pt]
(1+\xi)/2,\;\; & n=1,\\[2pt]
\dfrac{\xi^2-1}{4} P_{n-2}^{(1,1)}(\xi), \;\; & n \geq 2;
\end{cases}
\quad
P_n^{(-2,-2)}(\xi)=
\begin{cases}
 {(1-\xi)^2(2+\xi)}/{4},\;\; & n=0,\\[2pt]
 {(1-\xi)^2(1+\xi)}/{4},\;\; & n=1,\\[2pt]
 {(1+\xi)^2 (2-\xi)}/{4}, \;\;& n=2,\\[2pt]
 {(1+\xi)^2(\xi-1)}/{4},\;\; & n=3, \\[2pt]
 \Big(\dfrac{\xi^2-1}{4}\Big)^2P_{n-4}^{(2,2)}(\xi),\;\;&  n \geq 4.
\end{cases}
\end{equation}
The following derivative relation and orthogonality of the generalised Jacobi polynomials play crucial roles for the construction of the desired bases: 

\begin{itemize}
\item derivative relations:
\begin{equation}\label{eq: derivativep}
 P_n^{(-1,-1)'}(\xi)=
 \dfrac{n-1}{2}P_{n-1}^{(0,0)}(\xi) ,\;\;  n\geq 2,
 \quad
P_n^{(-2,-2)'}(\xi)=
 \dfrac{n-3}{2}P^{(-1,-1)}_{n-1}(\xi),\;\;  n\geq 4,
\end{equation}
where $P_n^{(0,0)}(\xi)=L_n(\xi)$ is the Legendre polynomial of order $n$;
\item orthogonality:
\begin{subequations}\label{eq: orth}
\begin{align}
& \int_{-1}^1 L_{n}(\xi) L_{m}(\xi) {\rm d}\xi=\frac{2}{2n+1}\delta_{mn},\quad m,n\geq 0, \label{eq: L1orth}\\
& \int_{-1}^1 P_{n}^{(-1,-1)}(\xi) P_{m}^{(-1,-1)}(\xi) (1-\xi^2)^{-1}{\rm d}\xi=\frac{n-1}{2n(2n-1)}\delta_{mn}, \quad m,n\geq 2, \label{eq: J1orth}\\
& \int_{-1}^1 P_{n}^{(-2,-2)}(\xi) P_{m}^{(-2,-2)}(\xi)(1-\xi^2)^{-2} {\rm d}\xi= \frac{(n-2)(n-3)}{8n(n-1)(2n-3)}\delta_{mn},\quad m,n\geq 4. \label{eq: J2orth}
\end{align}
\end{subequations}
\end{itemize}

\subsection{Divergence-free  bases in two dimensions}
Now we are ready to present a systematic procedure to construct high-order exact divergence-free spectral bases for $\bs H_0^1(\Omega)$ and $\bs H_0({\rm div},\Omega)$  spaces in two dimensions. Besides the derivative relations and orthogonal properties of generalised Jacobi polynomials, another crucial tools for these constructions are several de Rham complexes and their associated commuting diagrams. 

\subsubsection{$\bs H_0({\rm div},\Omega)$-conforming divergence-free basis in 2D}
To fix the idea, we start with the $\bs H_0({\rm div})$-conforming divergence-free basis on a reference square $\Lambda^2$. 
Consider the two-dimensional de Rham complex with minimal $L^2$-smoothness (see \cite{arnold2010finite, Monk03}) and its corresponding commuting diagram
\begin{equation}\label{eq: Bcomp2d}
\begin{split}
&\mathbb{R}\xrightarrow{\rm id}H^1(\Omega)\;\;\;\xrightarrow{\nabla\times  }\;\;\; \bs H({\rm div},\Omega)\;\;\;\xrightarrow{\nabla \cdot} \;\;\; L^2(\Omega)\xrightarrow{0} \{ 0\}\\
& \qquad \;\; \mathcal{P}_1 \downarrow \qquad \;\;\;\;\; \;\;\;\;\;\;\;\mathcal{P}_{\rm div} \downarrow \qquad \;\;\;\;\;\;\;\;\; \;\;\mathcal{P}_{0}\downarrow \\
&\mathbb{R}\xrightarrow{\rm id}H_N^1(\Omega)\;\;\xrightarrow{\nabla\times  }\;\; \bs H_N({\rm div},\Omega)\;\; \xrightarrow{\nabla \cdot} \;\;L_{N-1}^2(\Omega)\xrightarrow{0} \{ 0\}.
\end{split}
\end{equation}
The finite dimensional spaces $H_N^1$, $\bs H_N({\rm div})$ and $L_{N-1}^2$ form discrete approximate spaces for $H^1$, $\bs H({\rm div})$ and $L^2$ spaces, respectively and $\mathcal{P}_1$, $\mathcal{P}_{\rm div}$ and $\mathcal{P}_{0}$ are the corresponding projections onto the discrete spaces. The continuous and discrete complexes respectively form exact sequences,  implying that for each operator, the range of the operator coincides with the null space of the successive operator in the sequence. 

Define 
\begin{equation}\label{eq: Wn2d}
H_{N}^1(\Lambda^2):={\rm span}\big( \big\{P_{m}^{(-1,-1)}(\xi_1)P_{n}^{(-1,-1)}(\xi_2),\;\; 0\leq m,n\leq N  \big \} \big).
\end{equation}
We note from \cite{Shen94b} that $H_{N}^1(\Lambda^2)$
serves as a conforming basis for $H^1(\Lambda^2)$. Inspired by the exact sequence \eqref{eq: Bcomp2d}, one 
can construct the following approximation spaces $ \bs H_N({\rm div},\Lambda^2)$ for $\bs H({\rm div},\Lambda^2)$ as follows
\begin{equation}\label{eq: Yn2d}
\begin{aligned}
\bs H_N({\rm div},\Lambda^2)={\rm span}\Big(& 
 \Big\{P_{m}^{(-1,-1)}(\xi_1)L_{n}(\xi_2)\bs e_1,\;\;0\leq m\leq N,\; 0\leq n \leq N-1 \Big\},\\
 &\Big\{ L_{m}(\xi_1)P_{n}^{(-1,-1)}(\xi_2)\bs e_2, \;\; 0\leq m \leq N-1, \; 0\leq n \leq N \Big\} \Big),
 \end{aligned}
\end{equation} 
where $\bs e_i,\, i=1,2$ are the canonical bases of two-dimensional Cartesian coordinates. 
It is direct to verify that
\begin{equation}\label{eq: exHdiv2d}
 \nabla \times H_N^1(\Lambda^2)={\rm ker}(\nabla \cdot):=\big\{\bs v\in \bs H_N({\rm div},\Lambda^2) |\nabla \cdot  \bs v=0 \big\},
\end{equation}
which mimics the continuous relation \eqref{eq: Bcomp2d} at the discrete level.  

Note that $\bs H_N({\rm div},\Lambda^2)$ can be categorized into Raviart-Thomas $Q_{k,k-1}\times Q_{k-1,k}$ approximation basis for $\bs H({\rm div},\Lambda^2)$ (see \cite[p.~97]{boffi2013mixed}), thus  any function $\bs v\in \bs H_0({\rm div},\Lambda^2)\cap \bs H({\rm div0},\Lambda^2)$ can be well-approximated by $\bs v_N\in \bs H_N({\rm div},\Lambda^2)$ as
\begin{equation}\label{eq: v2d}
\begin{aligned}
\bs v_N(\bs \xi)=& \sum_{m,n=1}^{N-1} \Big( \hat v_{m,n}^1P_{m+1}^{(-1,-1)}(\xi_1)L_{n}(\xi_2)\bs e_1+ \hat v_{m,n}^2  L_{m}(\xi_1)P_{n+1}^{(-1,-1)}(\xi_2)\bs e_2 \Big)\\
                   & + \sum_{m=1}^{N-1} \hat v_{e,m}^1 P_{m+1}^{(-1,-1)}(\xi_1)\bs e_1 + \sum_{n=1}^{N-1} \hat v_{e,n}^{2} P_{n+1}^{(-1,-1)}(\xi_2)\bs e_2,
\end{aligned}
\end{equation}
where the terms involving $P_0^{(-1,-1)}$ and $P_1^{(-1,-1)}$ vanish due to the boundary condition $\bs n \cdot \bs v|_{\partial \Lambda^2}=0.$
With the help of the derivative property in equation \eqref{eq: derivativep}, it is straightforward to evaluate that
\begin{equation}
 \nabla \cdot \bs v_N(\bs \xi)=\sum_{m,n=1}^{N-1}  \frac{1}{2}\big( m\hat v_{m,n}^1 +n \hat v_{m,n}^2  \big)L_{m}(\xi_1)L_{n}(
\xi_2)+\sum_{m=1}^{N-1} \frac{m}{2} \hat v_{e,m}^1 L_m(\xi_1)+\sum_{n=1}^{N-1} \frac{n}{2} \hat v_{e,n}^2 L_n(\xi_2). 
\end{equation}
Thus, by the orthogonality of Legendre polynomials under the $L^2$ inner product, one obtains a sequence of linear equations from $\nabla \cdot \bs u_N=0$
\begin{equation}\label{eq: lineqB}
m\hat v^1_{m,n}+n\hat v_{m,n}^2=0, \;\; 1\leq m,n\leq N-1, \quad \hat v_{e,i}^1=\hat v_{e,i}^2=0,\;\; i=1,\cdots, N-1.
\end{equation}
Define 
\begin{equation}\label{eq: psiphi}
\begin{aligned}
&\psi_{m}(\xi)=\frac{\sqrt{2(2m-1)}}{m-1}P_{m}^{(-1,-1)}(\xi),\;\; m\geq 2, \quad \phi_n(\xi)=\sqrt{\frac{2n+1}{2}}L_n(\xi),\;\;n\geq 0.
\end{aligned}
\end{equation}
By combining equations \eqref{eq: v2d} and \eqref{eq: lineqB} and adopting the notations in equation \eqref{eq: psiphi}, it directly leads to the divergence-free spectral basis for $\bs H_0({\rm div}0;\Lambda^2)$ space in the following proposition. 
\begin{prop}
${\bs H}_{N,0}({\rm div0}; \Lambda^2)$ is a conforming divergence-free approximation space for $\bs H_0({\rm div}0;\Lambda^2)$ taking the form
\begin{equation}\label{eq: div0b2d}
{\bs H}_{N,0}({\rm div0}; \Lambda^2)={\rm span} \big \{\bs \Phi_{m,n}(\bs \xi),\; 1\leq m,n\leq N-1\big \},
\end{equation}
where $ {\bs \Phi}_{m,n}(\bs \xi)=\big( \psi_{m+1}(\xi_1)\phi_n(\xi_2),-\phi_m(\xi_1)\psi_{n+1}(\xi_2)    \big)^{\intercal}$. 
\end{prop}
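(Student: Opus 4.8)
The plan is to verify two things: first, that each $\bs\Phi_{m,n}$ is a genuine element of $\bs H_0(\mathrm{div}0;\Lambda^2)$ (i.e.\ it is $\bs H(\mathrm{div})$-conforming, pointwise divergence-free, and has vanishing normal trace on $\partial\Lambda^2$), and second, that $\{\bs\Phi_{m,n}\}$ is exactly the spanning set obtained from the general expansion \eqref{eq: v2d} after imposing the constraint equations \eqref{eq: lineqB}, so that ${\bs H}_{N,0}(\mathrm{div}0;\Lambda^2)$ is precisely $\bs H_N(\mathrm{div},\Lambda^2)\cap\bs H_0(\mathrm{div},\Lambda^2)\cap\bs H(\mathrm{div}0,\Lambda^2)$, hence a conforming divergence-free approximation space.

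First I would check the divergence-free property of each basis function directly. Writing $\bs\Phi_{m,n}=(\psi_{m+1}(\xi_1)\phi_n(\xi_2),\,-\phi_m(\xi_1)\psi_{n+1}(\xi_2))^{\intercal}$, we compute
\begin{equation}\label{eq: divPhi}
\nabla\cdot\bs\Phi_{m,n}=\psi_{m+1}'(\xi_1)\phi_n(\xi_2)-\phi_m(\xi_1)\psi_{n+1}'(\xi_2).
\end{equation}
By the derivative relation in \eqref{eq: derivativep}, $P_{m+1}^{(-1,-1)\prime}(\xi)=\tfrac{m}{2}L_m(\xi)$, so with the normalisations in \eqref{eq: psiphi} one gets $\psi_{m+1}'(\xi)=\tfrac{\sqrt{2(2m+1)}}{2}L_m(\xi)=\phi_m(\xi)$; substituting into \eqref{eq: divPhi} makes the two terms cancel identically, giving $\nabla\cdot\bs\Phi_{m,n}\equiv 0$ pointwise. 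Next, $\bs H(\mathrm{div})$-conformity and membership in $\bs H_N(\mathrm{div},\Lambda^2)$ follow because $\psi_{m+1}$ is a polynomial multiple of $P_{m+1}^{(-1,-1)}$ and $\phi_n$ a multiple of $L_n$, with indices in the admissible ranges $2\le m+1\le N$ and $0\le n\le N-1$ (and symmetrically in the second component). The homogeneous normal boundary condition $\bs n\cdot\bs\Phi_{m,n}|_{\partial\Lambda^2}=0$ holds because on the edges $\xi_1=\pm1$ the relevant component is $\psi_{m+1}(\xi_1)\phi_n(\xi_2)$ and $\psi_{m+1}$ is proportional to $P_{m+1}^{(-1,-1)}$, which vanishes at $\xi=\pm1$ for $m+1\ge2$; likewise for $\xi_2=\pm1$. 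Thus each $\bs\Phi_{m,n}\in\bs H_0(\mathrm{div}0;\Lambda^2)$.

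For the converse, I would take an arbitrary $\bs v_N\in\bs H_N(\mathrm{div},\Lambda^2)$ satisfying the zero normal trace, expand it as in \eqref{eq: v2d}, and impose $\nabla\cdot\bs v_N=0$; by the orthogonality \eqref{eq: L1orth} of Legendre polynomials this is equivalent to the algebraic system \eqref{eq: lineqB}, which forces $\hat v_{e,i}^1=\hat v_{e,i}^2=0$ and $\hat v_{m,n}^2=-\tfrac{m}{n}\hat v_{m,n}^1$ for $1\le m,n\le N-1$. Reparametrising the free coefficients and absorbing the scalar factors into the normalisations \eqref{eq: psiphi} shows that the solution set is exactly $\mathrm{span}\{\bs\Phi_{m,n}:1\le m,n\le N-1\}$, and a count of free parameters $(N-1)^2$ confirms no redundancy, so $\{\bs\Phi_{m,n}\}$ is a basis. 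Finally, the approximation property — that ${\bs H}_{N,0}(\mathrm{div}0;\Lambda^2)$ is dense in $\bs H_0(\mathrm{div}0;\Lambda^2)$ as $N\to\infty$ — follows from the discrete exactness \eqref{eq: exHdiv2d} together with the known Raviart--Thomas $Q_{k,k-1}\times Q_{k-1,k}$ approximation result cited after \eqref{eq: exHdiv2d}: approximating $\bs v$ in $\bs H_N(\mathrm{div},\Lambda^2)$ and projecting onto the kernel of the divergence via the commuting diagram \eqref{eq: Bcomp2d} preserves both the divergence-free property and the convergence rate. The main obstacle is bookkeeping rather than conceptual: one must be careful that the index shifts ($m\mapsto m+1$) and the precise constants in \eqref{eq: psiphi} make the cancellation in \eqref{eq: divPhi} exact, and that the boundary-induced vanishing of the $P_0^{(-1,-1)},P_1^{(-1,-1)}$ modes in \eqref{eq: v2d} is correctly accounted for when matching dimensions.
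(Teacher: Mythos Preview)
Your proposal is correct and its core step---deriving $\psi_{m+1}'=\phi_m$ from \eqref{eq: derivativep} and \eqref{eq: psiphi} so that the two terms in $\nabla\cdot\bs\Phi_{m,n}$ cancel---is exactly the paper's proof. The paper's proof records only this cancellation; the conformity, normal-trace, converse-direction, and approximation arguments you supply are handled in the paper by the discussion preceding the proposition (equations \eqref{eq: v2d}--\eqref{eq: lineqB} and the Raviart--Thomas remark), so your write-up is simply more self-contained.
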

\begin{proof}
With the help of  the derivative property \eqref{eq: derivativep}, one readily verifies that
\begin{equation}\label{eq:psirelaphi}
\psi_{m+1}'(\xi)=\phi_m(\xi),
\end{equation}
from where we obtain $\nabla_{\bs \xi} \cdot \bs\Phi_{m,n}(\bs \xi)=0.$ 
\end{proof}
In order to extend the above exact divergence-free basis from the reference square to physical domain $\Omega$ that is diffeomorphic to  $\Lambda^2$, one needs to resort to the following property of contravariant Piola transformation (see \cite[p.39]{ciarlet1988three}).
\begin{lm}
Let $\mathcal{F}$ be a diffeomorphism from $\Lambda^d$ onto $\mathcal{F}(\Lambda^d)=\Omega \subset \mathbb{R}^d$, the contravariant Piola transformation is given by
\begin{equation}\label{eq: piola}
\mathcal{F}^{\rm div}[\bs \Phi](\bs x)=\frac{{\partial_{\bs \xi} \bs x}}{{\rm det}(\partial_{\bs \xi} \bs x)}  \bs \Phi \circ \mathcal{F}^{-1}(\bs x),\;\; \bs x=\mathcal{F}(\bs \xi),
\end{equation}
which satisfies the identity
\begin{equation}\label{eq: piolaid}
{\rm det}(\partial_{\bs \xi} \bs x)   \nabla_{\bs x} \cdot \mathcal{F}^{\rm div}[\bs \Phi]=\nabla_{\bs \xi}\cdot  \bs \Phi .
\end{equation}
\end{lm}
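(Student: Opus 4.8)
The plan is to verify the identity \eqref{eq: piolaid} by a direct pointwise computation in Cartesian components; equation \eqref{eq: piola} is merely the definition of $\mathcal{F}^{\rm div}$ being recorded, so there is nothing to prove there. The only analytic ingredients needed are the chain rule, Jacobi's formula for the derivative of a determinant, and the symmetry of the mixed second-order partials of $\mathcal{F}$ (so $\mathcal{F}$ is tacitly assumed $C^2$). Throughout, write $J=\partial_{\bs\xi}\bs x$ with entries $J_{ik}=\partial x_i/\partial\xi_k$, $\mathcal{J}=\det J$, and let $C=(C_{ij})$ denote the cofactor matrix of $J$, so that the adjugate identity gives $(J^{-1})_{ji}=C_{ij}/\mathcal{J}$, whence $\partial\xi_j/\partial x_i=C_{ij}/\mathcal{J}$, and Laplace expansion gives the contraction $\sum_i C_{ij}J_{ik}=\mathcal{J}\,\delta_{jk}$.

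First I would write $\bigl(\mathcal{F}^{\rm div}[\bs\Phi]\bigr)_i=\mathcal{J}^{-1}\sum_k J_{ik}\Phi_k$, all quantities evaluated at $\bs\xi=\mathcal{F}^{-1}(\bs x)$, and use the chain rule to rewrite the physical divergence through reference derivatives:
\[
  \mathcal{J}\,\nabla_{\bs x}\!\cdot\mathcal{F}^{\rm div}[\bs\Phi]
  =\sum_{i}\mathcal{J}\,\partial_{x_i}\Bigl(\frac{1}{\mathcal{J}}\sum_k J_{ik}\Phi_k\Bigr)
  =\sum_{i,j}C_{ij}\,\partial_{\xi_j}\Bigl(\frac{1}{\mathcal{J}}\sum_k J_{ik}\Phi_k\Bigr).
\]
Expanding the $\partial_{\xi_j}$-derivative by the product rule splits the right-hand side into three groups, according to whether $\partial_{\xi_j}$ falls on $\Phi_k$, on $J_{ik}$, or on $\mathcal{J}^{-1}$. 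In the first group the contraction $\sum_i C_{ij}J_{ik}=\mathcal{J}\delta_{jk}$ collapses everything to $\sum_j\partial_{\xi_j}\Phi_j=\nabla_{\bs\xi}\!\cdot\bs\Phi$, exactly the desired right-hand side of \eqref{eq: piolaid}. It then remains to show that the other two groups cancel.

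This cancellation is the one non-routine point, and the step I expect to be the main obstacle. Applying $\sum_i C_{ij}J_{ik}=\mathcal{J}\delta_{jk}$ once more, the ``$\mathcal{J}^{-1}$'' group reduces to $-\mathcal{J}^{-1}\sum_k\Phi_k\,\partial_{\xi_k}\mathcal{J}$, while the ``$J_{ik}$'' group reduces to $\mathcal{J}^{-1}\sum_k\Phi_k\bigl(\sum_{i,j}C_{ij}\,\partial_{\xi_j}J_{ik}\bigr)$; the two annihilate each other precisely when $\sum_{i,j}C_{ij}\,\partial_{\xi_j}J_{ik}=\partial_{\xi_k}\mathcal{J}$ for every $k$. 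I would establish this last equality from Jacobi's formula $\partial_{\xi_k}\mathcal{J}=\sum_{a,b}C_{ab}\,\partial_{\xi_k}J_{ab}$ combined with the symmetry $\partial_{\xi_j}J_{ik}=\partial^2 x_i/(\partial\xi_j\,\partial\xi_k)=\partial_{\xi_k}J_{ij}$, after which relabelling the summation indices finishes the identification. (Equivalently, one may first prove the Piola identity $\sum_j\partial_{\xi_j}C_{ij}=0$ for each $i$, i.e.\ that the rows of the cofactor matrix are divergence-free in $\bs\xi$, which rests on the very same two facts, and then the whole computation collapses to a single line.) Assembling the three groups yields $\mathcal{J}\,\nabla_{\bs x}\!\cdot\mathcal{F}^{\rm div}[\bs\Phi]=\nabla_{\bs\xi}\!\cdot\bs\Phi$, which is \eqref{eq: piolaid}.
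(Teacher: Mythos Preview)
Your proof is correct. The paper itself does not prove this lemma at all: it simply states the result and cites \cite[p.~39]{ciarlet1988three} as the source, treating it as a standard fact about the contravariant Piola transformation. Your direct component-wise verification via the chain rule, Jacobi's formula, and the symmetry of mixed partials is the standard argument and fills in what the paper leaves to the reference; there is nothing to compare against in the paper's own text.
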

As a direct consequence, we arrive at the spectral approximation basis for $\bs H_0({\rm div}0;\Omega)$-conforming basis as follows.
\begin{prop}
${\bs H}_{N,0}({\rm div0};\Omega)$ is a conforming divergence-free approximation space for $\bs H_0({\rm div}0;\Omega)$ taking the form
\begin{equation}\label{eq: div0b2d}
{\bs H}_{N,0}({\rm div0}; \Omega)={\rm span} \big \{\tilde{ \bs \Phi}_{m,n}(\bs x),\; 1\leq m,n\leq N-1\big \},
\end{equation}
where $ \tilde{\bs \Phi}_{m,n}(\bs x)=\mathcal{F}^{\rm div}[{\bs \Phi}_{m,n}](\bs x)$, the basis function ${\bs \Phi}_{m,n}$ and the Piola mapping  $\mathcal{F}^{\rm div}$   are given in equations \eqref{eq: psiphi} and \eqref{eq: piola}, respectively. 
\end{prop}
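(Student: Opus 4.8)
The plan is to transplant the reference-square construction of the preceding proposition to $\Omega$ using only the Piola identity \eqref{eq: piolaid} together with the invertibility of the Jacobian $\partial_{\bs\xi}\bs x$; no further analysis of the generalised Jacobi polynomials is required.

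First I would record that, by the preceding proposition, every $\bs\Phi_{m,n}$ lies in $\bs H_0({\rm div}0;\Lambda^2)$, i.e. $\nabla_{\bs\xi}\cdot\bs\Phi_{m,n}=0$ on $\Lambda^2$ and $\bs n\cdot\bs\Phi_{m,n}=0$ on $\partial\Lambda^2$. Applying \eqref{eq: piolaid} to $\bs\Phi_{m,n}$ then gives ${\rm det}(\partial_{\bs\xi}\bs x)\,\nabla_{\bs x}\cdot\tilde{\bs\Phi}_{m,n}=\nabla_{\bs\xi}\cdot\bs\Phi_{m,n}=0$, and since $\mathcal{F}$ is a diffeomorphism the determinant is everywhere nonzero, so that $\nabla_{\bs x}\cdot\tilde{\bs\Phi}_{m,n}\equiv 0$ on $\Omega$. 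The exact divergence-free property is thus immediate.

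It then remains to settle two points. For conformity I would show $\tilde{\bs\Phi}_{m,n}\in\bs H_0({\rm div};\Omega)$, the only non-trivial part being the homogeneous normal trace $\bs n\cdot\tilde{\bs\Phi}_{m,n}|_{\partial\Omega}=0$; here I would invoke the standard fact that the contravariant Piola map \eqref{eq: piola} sends normal components of boundary traces into one another up to a nonvanishing scalar factor arising from the cofactor matrix of $\partial_{\bs\xi}\bs x$ and the change of the surface measure, which combined with $\bs n\cdot\bs\Phi_{m,n}|_{\partial\Lambda^2}=0$ gives the claim. For the basis and approximation statement I would note that $\mathcal{F}^{\rm div}$ is linear and, by applying formula \eqref{eq: piola} to $\mathcal{F}^{-1}$, invertible, hence an isomorphism of $\bs H_0({\rm div};\Lambda^2)$ onto $\bs H_0({\rm div};\Omega)$ which by \eqref{eq: piolaid} carries $\bs H_0({\rm div}0;\Lambda^2)$ onto $\bs H_0({\rm div}0;\Omega)$; linear independence of $\{\tilde{\bs\Phi}_{m,n}\}$ then follows from that of $\{\bs\Phi_{m,n}\}$, and the density of $\bigcup_N{\bs H}_{N,0}({\rm div0};\Omega)$ in $\bs H_0({\rm div}0;\Omega)$ follows by transporting, through this isomorphism, the Raviart--Thomas approximation property of the spaces $\bs H_N({\rm div},\Lambda^2)$ used right before \eqref{eq: v2d}.

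I expect the main obstacle to be making the normal-trace statement for $\mathcal{F}^{\rm div}$ fully precise, namely expressing the physical normal $\bs n$ along $\partial\Omega$ in terms of the reference normal via the cofactor formula and checking that the proportionality factor is nonzero on each face of $\Lambda^d$; by contrast the divergence-free identity and the isomorphism property fall out directly from \eqref{eq: piolaid} and the non-degeneracy of $\partial_{\bs\xi}\bs x$. A minor additional point is to spell out precisely in what sense ${\bs H}_{N,0}({\rm div0};\Omega)$ is a \emph{conforming divergence-free approximation space}: inclusion in $\bs H_0({\rm div}0;\Omega)$ for every $N$, together with density of the union of these spaces as $N\to\infty$.
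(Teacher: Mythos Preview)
Your proposal is correct and uses the same key step as the paper, namely combining $\nabla_{\bs\xi}\cdot\bs\Phi_{m,n}=0$ with the Piola identity \eqref{eq: piolaid} and the non-degeneracy of $\partial_{\bs\xi}\bs x$ to conclude $\nabla_{\bs x}\cdot\tilde{\bs\Phi}_{m,n}=0$. The paper's proof in fact stops there; the additional points you work out on preservation of the homogeneous normal trace, linear independence, and approximation/density are not addressed in the paper's argument, so your write-up is more complete than the original.
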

\begin{proof}
It is direct to verify that  $\nabla_{\bs x} \cdot  \tilde{\bs \Phi}_{m,n}(\bs x)=0 $ by using the fact that $\nabla_{\bs \xi}\cdot {\bs \Phi}_{m,n}=0$ and identity in equation \eqref{eq: piolaid}.
\end{proof}


\subsubsection{$\bs H_0^1(\Omega)$-conforming divergence-free basis in 2D}\label{sec: ho1div2d}

Next, we turn to the construction of suitable $\bs H^1$-conforming divergence-free basis.  Let us resort to the two-dimensional smoothed de Rham complex, known as the 2D ``Stokes complex'' \cite{falkstokes2013, guzman2014conforming, mardal2002robust}:
\begin{equation}\label{eq: stokescom2d}
\mathbb{R}\xrightarrow{\rm id}H^2(\Omega)\xrightarrow{\nabla\times  }  \bs H^1(\Omega)\xrightarrow{\nabla \cdot} L^2(\Omega) \xrightarrow{0} \{ 0\}.
\end{equation}
This complex is exact provided that $\Omega$ is simply connected \cite{girault2012finite}.  One can start from the conforming basis for $H^2(\Lambda^2)$: 
\begin{equation}\label{eq:h2N}
H^2_N(\Lambda^2)={\rm span}\big( \big\{P_m^{(-2,-2)}(x_1)P_n^{(-2,-2)}(x_2),\;\;0\leq m,n\leq N  \big\}  \big),
\end{equation}
and the derivation is quite similar with $\bs H_0({\rm div},\Omega)$-conforming divergence-free basis in the previous section. For the sake of concise explanation, we omit the detailed derivation and summarize the corresponding basis as follows.

\begin{prop}\label{prop: h12d}
Define for $m,n\geq 1,$
	\begin{equation}\label{eq: chi}
	 \bs \chi_{m,n}(\bs \xi)=\big(  \varphi_{m+3}(\xi_1)\psi_{n+2}(\xi_2)    ,-\psi_{m+2}(\xi_1)\varphi_{n+3}(\xi_2)    \big)^{\intercal},\qquad \tilde{ \bs \chi}_{m,n}(\bs x)=\mathcal{F}^{\rm div}[{\bs \chi}_{m,n}](\bs x),
	\end{equation}
where $\mathcal{F}^{\rm div}$ is the contravariant Piola transformation and $\varphi_{m}(\xi)$ is given by
	\begin{equation}\label{eq:varphipsi}
	\varphi_{m}(\xi) = \frac{\sqrt{8(2m-3)}}{(m-3)(m-2)} P_{m}^{(-2,-2)}(\xi), \;\; m \geq 4.
	\end{equation}
The conforming divergence-free approximation space for $\bs H^1_0({\rm div}0;\Omega)$ takes the form
\begin{equation}\label{eq: div0b2d}
{\bs H}^1_{N,0}({\rm div0},\Omega)={\rm span} \big \{\tilde{ \bs \chi}_{m,n}(\bs x),\; 1\leq m,n\leq N-3\big \}.
\end{equation}
\end{prop}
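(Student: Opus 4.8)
The plan is to mirror exactly the argument used for the $\bs H_0(\mathrm{div}0)$-conforming basis in the previous subsection, but now starting from the Stokes complex \eqref{eq: stokescom2d} and the $H^2$-conforming basis \eqref{eq:h2N} in place of the $\bs H(\mathrm{div})$ complex \eqref{eq: Bcomp2d}. First I would expand a generic element $\bs v_N \in \bs H^1_0(\mathrm{div}0;\Lambda^2)$ in the tensor-product basis built from $P_m^{(-2,-2)}$ in the component carrying the ``normal-type'' factor and from $P_n^{(-1,-1)}$ in the tangential factor (this is the $\bs H^1$-conforming analogue of the Raviart--Thomas pair, consistent with the smoothed complex, since $\nabla\times H^2_N$ lands in such a space). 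The homogeneous boundary condition $\bs v|_{\partial\Lambda^2}=\bs 0$ then forces the coefficients of $P_0^{(-2,-2)},P_1^{(-2,-2)},P_2^{(-2,-2)},P_3^{(-2,-2)}$ and of $P_0^{(-1,-1)},P_1^{(-1,-1)}$ to vanish, which is why the surviving indices start at $m+3,n+2$ (resp.\ $m+2,n+3$) in \eqref{eq: chi}, leaving $1\le m,n\le N-3$.

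Next I would compute $\nabla_{\bs\xi}\cdot\bs v_N$ using the two derivative relations in \eqref{eq: derivativep}: $P_{m}^{(-2,-2)\prime}=\tfrac{m-3}{2}P_{m-1}^{(-1,-1)}$ brings the first component's $\xi_1$-derivative into a $P^{(-1,-1)}$ in $\xi_1$, matching the $P^{(-1,-1)}$ in $\xi_2$ of the same term, and symmetrically for the second component. Invoking orthogonality \eqref{eq: J1orth} of the $P^{(-1,-1)}$ under the weight $(1-\xi^2)^{-1}$ then converts $\nabla\cdot\bs v_N=0$ into the scalar relations $m\,\hat v^1_{m,n}+n\,\hat v^2_{m,n}=0$, exactly as in \eqref{eq: lineqB}. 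Solving this one-parameter family and absorbing the normalization constants into $\varphi_m$ (definition \eqref{eq:varphipsi}) and $\psi_n$ (definition \eqref{eq: psiphi}) yields the single basis vector $\bs\chi_{m,n}$ per index pair. The crucial identity to check is the analogue of \eqref{eq:psirelaphi}, namely $\varphi_{m+3}'(\xi)=\psi_{m+2}(\xi)$, which follows directly by combining the two derivative formulas in \eqref{eq: derivativep} with the chosen normalizations; from it one reads off $\nabla_{\bs\xi}\cdot\bs\chi_{m,n}=\varphi_{m+3}'(\xi_1)\psi_{n+2}(\xi_2)-\psi_{m+2}(\xi_1)\varphi_{n+3}'(\xi_2)=\psi_{m+2}(\xi_1)\psi_{n+2}(\xi_2)-\psi_{m+2}(\xi_1)\psi_{n+2}(\xi_2)=0$.

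Finally I would transport the result to the physical domain: since $\mathcal F$ is a diffeomorphism $\Lambda^2\to\Omega$ and $\bs\chi_{m,n}\in \bs H^1(\Lambda^2)$ with zero trace, the Piola-mapped functions $\tilde{\bs\chi}_{m,n}=\mathcal F^{\mathrm{div}}[\bs\chi_{m,n}]$ lie in $\bs H^1_0(\Omega)$ (here one uses that for a smooth diffeomorphism the contravariant Piola transform maps $\bs H^1$ to $\bs H^1$ and preserves vanishing boundary traces), and the Piola identity \eqref{eq: piolaid} gives $\det(\partial_{\bs\xi}\bs x)\,\nabla_{\bs x}\cdot\tilde{\bs\chi}_{m,n}=\nabla_{\bs\xi}\cdot\bs\chi_{m,n}=0$, hence $\nabla_{\bs x}\cdot\tilde{\bs\chi}_{m,n}=0$. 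Counting dimensions shows $\{\tilde{\bs\chi}_{m,n}:1\le m,n\le N-3\}$ has the same cardinality as $\nabla\times H^2_N(\Lambda^2)$ modulo constants, i.e.\ it spans the discrete kernel of the divergence in the smoothed complex, giving conformity.

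The main obstacle I anticipate is not the divergence computation, which is a routine consequence of \eqref{eq: derivativep} and \eqref{eq: J1orth}, but rather justifying that $\mathcal F^{\mathrm{div}}$ genuinely maps $\bs H^1_0$ into $\bs H^1_0$ (the Piola transform is naturally an $\bs H(\mathrm{div})$ morphism, and the extra $\bs H^1$-regularity requires that $\mathcal F$ be sufficiently smooth, $\mathcal F\in C^2$ say, so that $\partial_{\bs\xi}\bs x/\det(\partial_{\bs\xi}\bs x)$ is $C^1$) and that the resulting space has the claimed dimension and exactly realizes the discrete Stokes-complex kernel. This is where the hypotheses on $\mathcal F$ and the exactness of \eqref{eq: stokescom2d} on simply connected $\Omega$ are really used; everything else parallels the $\bs H_0(\mathrm{div}0)$ case verbatim.
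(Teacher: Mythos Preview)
Your proposal is correct and follows essentially the same approach as the paper: the key step is the identity $\varphi_{m+1}'(\xi)=\psi_m(\xi)$ (equivalently $\varphi_{m+3}'=\psi_{m+2}$) from the derivative relation \eqref{eq: derivativep}, which yields $\nabla_{\bs\xi}\cdot\bs\chi_{m,n}=0$, and then the Piola identity \eqref{eq: piolaid} transports this to $\Omega$. The paper's proof records only these two ingredients and explicitly omits the detailed derivation you supply (expansion, boundary-index elimination, orthogonality, dimension count), so your write-up simply fills in what the paper leaves to the reader.
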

\begin{proof}
From the derivative relation of generalised Jacobi polynomials in \eqref{eq: derivativep}, we can obtain that
$\varphi_{m+1}'(\xi)=\psi_m(\xi)$. Thus, it is straightforward to derive that $\nabla_{\bs \xi}\cdot {\bs \chi}_{m,n}=0,$ which together with the property of the Piola transformation \eqref{eq: piolaid} leads to  $\nabla_{\bs x} \cdot  \tilde{\bs \chi}_{m,n}(\bs x)=0 $.
\end{proof}


\subsection{Divergence-free  bases in three dimensions}
Following the same spirit of the two dimensional case in the previous subsections, we can obtain the three dimensional  $\bs H_0({\rm div},\Omega)$- and $\bs H_0^1(\Omega)$-conforming divergence-free  bases, which are accomplished by realizing the three-dimensional de Rham complex with $L^2$-smoothness \cite{Monk03}
\begin{equation}\label{eq: 3dcompl2}
\mathbb{R}\xrightarrow{\rm id}H^1(\Omega)\xrightarrow{\nabla } \bs H({\rm curl},\Omega)\xrightarrow{\nabla \times} \bs H({\rm div},\Omega) \xrightarrow{\nabla \cdot} L^2(\Omega)\xrightarrow{0} \{ 0\}
\end{equation}
and  the 3D Stokes complex \cite{neilan2015discrete, tai2006discrete}
\begin{equation}\label{eq: stokescom3d}
\mathbb{R}\xrightarrow{\rm id}H^2(\Omega)\xrightarrow{\nabla  } \bs H^1({\rm curl},\Omega)\xrightarrow{\nabla \times} \bs H^1(\Omega)\xrightarrow{\nabla \cdot }L^2(\Omega)\xrightarrow{0} \{ 0\}.
\end{equation}
The derivations are much involved, especially for the $\bs H_0^1$-conforming divergence-free  bases in 3D.  For conciseness, we postpone them  in appendices and state the results in the following two propositions.
\begin{prop}\label{prop: p3}
The conforming divergence-free approximation space for $\bs H_0({\rm div}0;\Omega)$ in three dimensions takes the form
\begin{equation}\label{eq: div0b3d}
{\bs H}_{N,0}({\rm div0}; \Omega)= {\rm span}\Big\{ \tilde{\bs\Phi}_{m,n,l}^1(\bs x),\tilde{\bs\Phi}_{m,n,l}^2(\bs x), \tilde{\bs \Phi}^{x}_{n,l}(\bs x), \tilde{\bs\Phi}^{y}_{m,l}(\bs x),\tilde{\bs\Phi}^{z}_{m,n}(\bs x) \Big\}_{m,n,l=1}^{N-1},
\end{equation}
where $\tilde{ \bs \Phi}(\bs x)=\mathcal{F}^{\rm div}[{\bs \Phi}](\bs x)$ for $ \bs \Phi=\bs \Phi_{m,n,l}^1,\bs \Phi_{m,n,l}^2,  \bs \Phi^{x}_{n,l}, \bs \Phi^{y}_{m,l},\bs \Phi^{z}_{m,n}$, and $\mathcal{F}^{\rm div}$ is the contravariant Piola transformation defined in equation \eqref{eq: piola}. The basis functions on  the reference cube $\Lambda^3$  for the interior modes take the form
\begin{equation}\label{eq:divBb1}
\begin{aligned}
&\bs \Phi_{m,n,l}^1(\bs \xi)=\Big( \psi_{m+1}(\xi_1)\phi_{n}(\xi_2)\phi_{l}(\xi_3), \;\; -\phi_m(\xi_1)\psi_{n+1}(\xi_2)\phi_{l}(\xi_3),\;\;0\Big)^{\intercal},\\
&\bs \Phi_{m,n,l}^2(\bs \xi)=\Big( \psi_{m+1}(\xi_1)\phi_{n}(\xi_2)\phi_{l}(\xi_3),\;\; 0,\;\;   -\phi_{m}(\xi_1)\phi_{n}(\xi_2)\psi_{l+1}(\xi_3)\Big)^{\intercal},
\end{aligned}
\end{equation}
and the basis functions for the face modes, which are of type $x$, $y$, and $z$, respectively, take the form
\begin{align}
&  \bs \Phi^{x}_{n,l}(\bs \xi)=\big( 0,\; \; \psi_{n+1}(\xi_2) \phi_l(\xi_3),\;\; -\phi_n(\xi_2) \psi_{l+1}(\xi_3)  \big)^{\intercal},  \label{eq:hdive1}\\
&  \bs \Phi^{y}_{m,l}(\bs \xi)=\big(  \psi_{m+1}(\xi_1) \phi_l(\xi_3),\;\;0,\;\; -\phi_m(\xi_1) \psi_{l+1}(\xi_3)  \big)^{\intercal}, \label{eq:hdive2}\\
&  \bs \Phi^{z}_{m,n}(\bs \xi)=\big(  \psi_{m+1}(\xi_1) \phi_n(\xi_2),\;\; -\phi_m(\xi_1) \psi_{n+1}(\xi_2) ,\;\;0 \big)^{\intercal},\label{eq:hdive3}
\end{align}
where $\{ \psi_m\}_{m=2}^{N}$ and $\{\phi_m\}_{m=1}^{N-1}$ are defined in equation \eqref{eq: psiphi}. 
\end{prop}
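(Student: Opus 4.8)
The plan is to mirror the two-dimensional construction of $\bs H_{N,0}({\rm div}0;\Lambda^2)$, now realising the discrete analogue of the $L^2$-smooth de Rham complex \eqref{eq: 3dcompl2} on the reference cube $\Lambda^3$. Concretely, I would introduce the tensor-product Raviart--Thomas space $\bs H_N({\rm div},\Lambda^3)$ whose three components are spanned by $P_m^{(-1,-1)}(\xi_1)L_n(\xi_2)L_l(\xi_3)\bs e_1$, $L_m(\xi_1)P_n^{(-1,-1)}(\xi_2)L_l(\xi_3)\bs e_2$ and $L_m(\xi_1)L_n(\xi_2)P_l^{(-1,-1)}(\xi_3)\bs e_3$ (with the degree in the normal variable raised by one), together with the N\'ed\'elec-type space $\bs H_N({\rm curl},\Lambda^3)$ and $L^2_{N-1}(\Lambda^3)={\rm span}\{L_mL_nL_l\}$; these form an exact discrete sequence and a commuting diagram with the continuous complex, exactly as in \eqref{eq: Bcomp2d}. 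This identifies the target space as $\bs H_{N,0}({\rm div}0;\Lambda^3)=\ker(\nabla_{\bs\xi}\cdot)\cap\{\bs v\in\bs H_N({\rm div},\Lambda^3):\bs n\cdot\bs v|_{\partial\Lambda^3}=\bs 0\}$, and the de Rham structure guarantees that the explicit basis exhibited below is complete.

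For the ``easy'' inclusion I would check directly that each reference function in \eqref{eq:divBb1} and \ree{eq:hdive1}{eq:hdive3} lies in $\bs H_0({\rm div}0;\Lambda^3)$. Divergence-freeness is immediate from the relation $\psi_{m+1}'=\phi_m$ recorded in \eqref{eq:psirelaphi} (a consequence of \eqref{eq: derivativep}): for instance $\nabla_{\bs\xi}\cdot\bs\Phi^1_{m,n,l}=\psi_{m+1}'(\xi_1)\phi_n(\xi_2)\phi_l(\xi_3)-\phi_m(\xi_1)\psi_{n+1}'(\xi_2)\phi_l(\xi_3)=0$, and likewise for $\bs\Phi^2_{m,n,l}$ and the three face families. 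The boundary condition $\bs n\cdot\bs v|_{\partial\Lambda^3}=\bs 0$ holds because in every basis function the component normal to a given face of $\Lambda^3$ either is identically zero or carries a factor $\psi_{k+1}\propto P^{(-1,-1)}_{k+1}$ with $k\ge1$, hence a factor $(1-\xi^2)$ in that variable. Linear independence of the entire collection follows from the linear independence of the underlying tensor products of generalised Jacobi and Legendre polynomials.

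The ``completeness'' inclusion is the crux. I would expand a generic $\bs v_N\in\bs H_N({\rm div},\Lambda^3)$ with $\bs n\cdot\bs v_N|_{\partial\Lambda^3}=\bs 0$ in the Raviart--Thomas basis; the boundary condition removes the $P^{(-1,-1)}_0,P^{(-1,-1)}_1$ contributions from the normal component of each face pair, so the surviving terms carry coefficients $\hat a_{m,n,l},\hat b_{m,n,l},\hat c_{m,n,l}$ with the normal index in $\{1,\dots,N-1\}$ (after reindexing $P^{(-1,-1)}_{\cdot+1}$) and the two tangential indices in $\{0,\dots,N-1\}$. Applying $\nabla_{\bs\xi}\cdot$ and \eqref{eq: derivativep} rewrites $\nabla_{\bs\xi}\cdot\bs v_N$ as a Legendre tensor series $\sum\tfrac12(m\hat a_{m,n,l}+n\hat b_{m,n,l}+l\hat c_{m,n,l})L_m(\xi_1)L_n(\xi_2)L_l(\xi_3)$, so by the $L^2$-orthogonality \eqref{eq: L1orth} the constraint $\nabla_{\bs\xi}\cdot\bs v_N=0$ decouples into scalar relations indexed by $(m,n,l)$. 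Classifying these by how many of $m,n,l$ vanish is the heart of the bookkeeping: when $m,n,l\ge1$, one equation in three unknowns leaves a two-dimensional solution space realised by $\{\bs\Phi^1_{m,n,l},\bs\Phi^2_{m,n,l}\}$; when exactly one index is zero, say $l=0$, the term with $L_0(\xi_3)=1$ yields one equation in two unknowns, realised by the face mode $\bs\Phi^z_{m,n}$ (and symmetrically for $\bs\Phi^x_{n,l},\bs\Phi^y_{m,l}$); when two indices vanish the surviving coefficient is forced to zero, so no ``edge'' modes persist, exactly as in two dimensions. A dimension count then shows $\{\bs\Phi^1_{m,n,l},\bs\Phi^2_{m,n,l},\bs\Phi^x_{n,l},\bs\Phi^y_{m,l},\bs\Phi^z_{m,n}\}_{m,n,l=1}^{N-1}$ is a basis of $\bs H_{N,0}({\rm div}0;\Lambda^3)$.

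Finally, I would transfer everything to $\Omega$ using the contravariant Piola transformation $\mathcal{F}^{\rm div}$: identity \eqref{eq: piolaid} gives $\nabla_{\bs x}\cdot\tilde{\bs\Phi}=0$ from $\nabla_{\bs\xi}\cdot\bs\Phi=0$, $\mathcal{F}^{\rm div}$ preserves vanishing normal traces so $\bs n\cdot\tilde{\bs\Phi}|_{\partial\Omega}=\bs 0$, and $\mathcal{F}^{\rm div}$ is an isomorphism, hence the pushed-forward functions form a basis of a subspace of $\bs H_0({\rm div}0;\Omega)$; its approximation property is inherited from the standard Raviart--Thomas estimates on $\Lambda^3$ composed with $\mathcal{F}$. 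I expect the only genuine difficulty to be the third step: keeping the index ranges, the boundary-condition reduction and the degenerate face/edge cases straight, and --- more subtly --- using the exactness of the discrete de Rham sequence to certify that no divergence-free mode has been overlooked, rather than merely checking that the listed functions happen to be divergence-free.
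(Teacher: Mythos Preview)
Your proposal is correct and follows essentially the same approach as the paper's proof in Appendix~\ref{ap: b}: expand a generic element of $\bs H_N({\rm div},\Lambda^3)\cap\bs H_0({\rm div},\Lambda^3)$ in the Raviart--Thomas basis, compute its divergence via the derivative relation \eqref{eq: derivativep}, invoke Legendre orthogonality to decouple the constraint into the scalar equations $m\hat v^1+n\hat v^2+l\hat v^3=0$ (interior), $n\hat v^1_{f}+l\hat v^2_{f}=0$ (face), with edge coefficients forced to zero, then read off the null spaces and apply the Piola map. The only cosmetic difference is that the paper decomposes into interior/face/edge modes from the outset (equation \eqref{eq:3dBexp}), whereas you phrase the same case split as ``how many tangential indices vanish''; the content is identical.
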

\begin{proof}
See detailed derivations in Appendix \ref{ap: b}.
\end{proof}


\begin{prop}\label{prop: prop4}
The conforming divergence-free approximation space for $\bs H^1_0({\rm div}0;\Omega)$ in three dimensions reads
\begin{equation}\label{eq: div0u3d}
{\bs H}^1_{N,0}({\rm div0},\Omega)= {\rm span}\Big\{ \tilde{\bs\chi}_{m,n,l}^1(\bs x),\,\tilde{\bs\chi}_{m,n,l}^2(\bs x),\, \tilde{\bs \chi}^{x}_{n,l}(\bs x),\, \tilde{\bs\chi}^{y}_{m,l}(\bs x),\,\tilde{\bs\chi}^{z}_{m,n}(\bs x) \Big\}_{m,n,l=1}^{N-3},
\end{equation}
where $\tilde{ \bs \chi}(\bs x)=\mathcal{F}^{\rm div}[{\bs \chi}](\bs x)$ for $ \bs \chi=\bs \chi_{m,n,l}^1,\,\bs \chi_{m,n,l}^2,\,  \bs \chi^{x}_{n,l},\, \bs \chi^{y}_{m,l},\, \bs \chi^{z}_{m,n}$ and $\mathcal{F}^{\rm div}$ is the contravariant Piola transformation.  Here, $\big\{\bs \chi_{m,n,l}^1(\bs \xi),\bs \chi_{m,n,l}^2(\bs \xi) \big\}$ stand for the interior modal basis functions defined by 
	\begin{equation}
	\label{eq:div0u_in}
	\begin{aligned}
	& \bs \chi_{m,n,l}^1(\bs \xi)
	=\big( \varphi_{m+3}(\xi_1)\psi_{n+2}(\xi_2)\psi_{l+2}(\xi_3),\;\;  -\psi_{m+2}(\xi_1)\varphi_{n+3}(\xi_2)\psi_{l+2}(\xi_3),\;\; 0\big)^{\intercal},
	\\
	&\bs \chi_{m,n,l}^2(\bs \xi)
	=\big( \varphi_{m+3}(\xi_1)\psi_{n+2}(\xi_2)\psi_{l+2}(\xi_3),\;\; 0,\;\;   -\psi_{m+2}(\xi_1)\psi_{n+2}(\xi_2)\varphi_{l+3}(\xi_3)\big)^{\intercal},
	\end{aligned}
	\end{equation}
	and $\big\{\bs \chi_{n,l}^x (\bs \xi), \bs \chi_{m,l}^y (\bs \xi), \bs \chi_{m,n}^z (\bs \xi) \big\}$ 
	stand for the face modal basis functions of type $x$, $y$, and $z$, respectively, defined by
	\begin{equation}
	\label{eq:div0u_fa}
	\begin{aligned}
	&  \bs \chi_{n,l}^x(\bs \xi)
	=\big( 0, \; \psi_2(\xi_1)\varphi_{n+3}(\xi_2) \psi_{l+2}(\xi_3),\; - \psi_2(\xi_1)\psi_{n+2}(\xi_2) \varphi_{l+3}(\xi_3)  \big)^{\intercal},  
	\\
	&  \bs \chi_{m,l}^y(\bs \xi)
	=\big(  \varphi_{m+3}(\xi_1) \psi_2(\xi_2) \psi_{l+2}(\xi_3),\;0,\; -\psi_{m+2}(\xi_1) \psi_2(\xi_2) \varphi_{l+3}(\xi_3)  \big)^{\intercal}, 
	\\
	&  \bs \chi_{m,n}^z(\bs \xi)
	=\big(  \varphi_{m+3}(\xi_1) \psi_{n+2}(\xi_2)\psi_2(\xi_3),\; -\psi_{m+2}(\xi_1) \varphi_{n+3}(\xi_2)\psi_2(\xi_3) ,\;0 \big)^{\intercal},
	\end{aligned}
	\end{equation}
	where $\big\{\varphi_m(\xi) \big\}_{m=4}^{N}$ and $\big\{\psi_m(\xi)\big\}_{m=2}^{N-1}$ are given by equation \eqref{eq:varphipsi} and \eqref{eq: psiphi}, respectively.
\end{prop}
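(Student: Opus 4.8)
The plan is to mirror the two-dimensional argument of Proposition \ref{prop: h12d}, promoting it to three dimensions by working through the 3D Stokes complex \eqref{eq: stokescom3d}, and then to transfer the result to $\Omega$ via the contravariant Piola transformation. First I would fix notation on the reference cube $\Lambda^3$ and record the scalar derivative identity that underlies everything: from the derivative relations \eqref{eq: derivativep} together with the normalisations \eqref{eq: psiphi} and \eqref{eq:varphipsi}, one has $\varphi_{m+1}'(\xi)=\psi_m(\xi)$ and $\psi_{m+1}'(\xi)=\phi_m(\xi)$. These two identities are the only analytic facts needed; the rest is bookkeeping. I would then define the candidate interior and face fields exactly as in \eqref{eq:div0u_in}--\eqref{eq:div0u_fa} and verify, by a direct computation using the product rule and the identity $\varphi_{m+1}'=\psi_m$, that $\nabla_{\bs\xi}\cdot\bs\chi=0$ for each of the five families. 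For instance, for $\bs\chi^1_{m,n,l}$ the first component contributes $\varphi_{m+3}'(\xi_1)\psi_{n+2}(\xi_2)\psi_{l+2}(\xi_3)=\psi_{m+2}(\xi_1)\psi_{n+2}(\xi_2)\psi_{l+2}(\xi_3)$, the second contributes its negative, and the third component is identically zero; the face fields cancel in the same pairwise fashion. This step is routine but must be done for each of the five templates.

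The substantive part is to justify that the listed functions actually \emph{span} a conforming discretisation of $\bs H^1_0(\mathrm{div}0;\Lambda^3)$, i.e. that one has not merely produced divergence-free fields but has captured the whole discrete kernel. Here I would invoke the 3D Stokes complex \eqref{eq: stokescom3d} and its commuting diagram: the discrete kernel of $\nabla\cdot$ acting on the chosen $\bs H^1$-conforming vector space $\bs H^1_N(\Lambda^3)$ equals the range of $\nabla\times$ acting on the discrete $\bs H^1(\mathrm{curl})$ space, which in turn is the image under $\nabla\times$ of the $H^2$-conforming scalar space $H^2_N(\Lambda^3)$ built from tensor products of the $P^{(-2,-2)}_k$. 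Thus every element of the discrete divergence-free space is $\nabla\times$ of a potential, and expanding that potential in the generalised-Jacobi tensor basis and applying $\nabla\times$ (using $\varphi_{m+1}'=\psi_m$ once more) produces precisely the interior fields $\bs\chi^{1},\bs\chi^{2}$ together with the lower-dimensional face contributions $\bs\chi^{x},\bs\chi^{y},\bs\chi^{z}$ that arise when one index of the potential sits at its boundary value; the homogeneous boundary conditions $\bs v|_{\partial\Lambda^3}=\bs 0$ kill the remaining edge/vertex terms. The index ranges $1\le m,n,l\le N-3$ fall out of the ranges $4\le m\le N$ in \eqref{eq:varphipsi} shifted by the two derivative relations. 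A dimension count — comparing the number of listed basis functions with $\dim\bigl(\nabla\times H^2_N\bigr)$ computed from the exactness of the discrete complex — would confirm that the family is exactly a basis, with no redundancy.

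Finally, transferring to the physical domain: since $\mathcal{F}:\Lambda^3\to\Omega$ is a diffeomorphism and $\Omega$ is simply connected, the Stokes complex is exact on $\Omega$, and the contravariant Piola transformation \eqref{eq: piola} maps $\bs H_0(\mathrm{div})$ to $\bs H_0(\mathrm{div})$ preserving the normal trace. By the identity \eqref{eq: piolaid}, $\mathrm{det}(\partial_{\bs\xi}\bs x)\,\nabla_{\bs x}\cdot\tilde{\bs\chi}=\nabla_{\bs\xi}\cdot\bs\chi=0$, so each $\tilde{\bs\chi}=\mathcal{F}^{\mathrm{div}}[\bs\chi]$ is pointwise divergence-free on $\Omega$; and since $\mathcal{F}^{\mathrm{div}}$ is a linear isomorphism on the relevant spaces, the span is carried over faithfully, giving \eqref{eq: div0u3d}. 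I expect the main obstacle to be the second paragraph — carefully identifying which boundary-reduced terms survive the homogeneous boundary conditions and organising them into the three face families without double-counting — rather than any hard estimate; this is exactly why the authors relegate the full derivation to the appendix. One subtlety worth flagging is that $\bs H^1$-conformity (as opposed to mere $\bs H(\mathrm{div})$-conformity) of the Piola-transformed fields requires the Jacobian of $\mathcal{F}$ to be smooth enough, which is part of the standing assumption that $\mathcal{F}$ is a diffeomorphism.
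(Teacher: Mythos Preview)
Your direct verification that each $\bs\chi$ is divergence-free via $\varphi_{m+1}'=\psi_m$ is correct, and the Piola step is fine. The gap is in your spanning argument. You write that the discrete divergence-free space is ``the image under $\nabla\times$ of the $H^2$-conforming scalar space $H^2_N(\Lambda^3)$'' and then propose to expand a \emph{scalar} potential in the tensor Jacobi basis. That is the two-dimensional mechanism (the complex \eqref{eq: stokescom2d}, in which $\nabla\times$ sends scalars to vectors); in three dimensions the Stokes complex \eqref{eq: stokescom3d} has \emph{four} nodes, and the curl potential for a divergence-free $\bs H^1$ field lives in the \emph{vectorial} space $\bs H^1(\mathrm{curl})$, not in $H^2$. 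Applying $\nabla\times$ to a scalar is undefined in 3D, and the composite $\nabla\times\circ\nabla$ vanishes identically, so the potential argument collapses as written. Even if repaired to use a vector potential in $\bs H^1(\mathrm{curl})_N$, you would then have to quotient by the gauge freedom $\ker(\nabla\times)=\nabla H^2_N$, which reinstates exactly the bookkeeping you were hoping to avoid.

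The paper's derivation (Appendix~\ref{ap:d}) bypasses potentials entirely. It posits directly the $\bs H^1$-conforming tensor space $\mathbb{X}_N$ whose $i$-th component is a product $P^{(-2,-2)}\cdot(P^{(-2,-2)})'\cdot(P^{(-2,-2)})'$ with the undifferentiated factor in the $i$-th slot, decomposes $\mathbb{X}_N$ into interior, face, edge and vertex modes, and uses two concrete one-variable facts to prune: $\mathbb{P}_2\cap H^1_0(\Lambda)=\mathrm{span}\,P_2^{(-1,-1)}$ pins the low-degree face factor to $\psi_2$, and the observation that no $f\in\mathbb{P}_3\cap H^1_0(\Lambda)$ satisfies $f'=P_2^{(-1,-1)}$ shows that edge and vertex modes cannot be simultaneously Dirichlet and solenoidal, eliminating them outright. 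On the surviving interior and face modes, the divergence-free constraint is imposed by expanding, differentiating via \eqref{eq: derivativep}, and invoking the weighted orthogonality \eqref{eq: J1orth} of $\{P^{(-1,-1)}_m\}$; this yields decoupled linear constraints $m\hat v^1+n\hat v^2+l\hat v^3=0$ (interior) and $n\hat v^1+l\hat v^2=0$ (face), whose null-space bases are precisely the $\bs\chi^1,\bs\chi^2$ and $\bs\chi^x,\bs\chi^y,\bs\chi^z$. The argument is linear-algebraic rather than cohomological, and the step you correctly anticipated as the ``main obstacle'' --- killing the edge and vertex contributions --- is handled by the $\mathbb{P}_3$ observation above, which has no analogue in a scalar-potential framework.
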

\begin{proof}
We postpone the derivations in Appendix \ref{ap:d}.
\end{proof}

\subsection{Efficient solution algorithms}
In order to facilitate the development of efficient solution algorithms for the MHD system \eqref{eq: mhd}, we need to resort to the divergence-free spectral discretizations of two model problems on reference domain $\Omega=\Lambda^{d}$ ($d=2,3$), i.e. the curl-curl problem of the magnetic field 
 \begin{subequations}\label{eq:Bmodelsystem}
\begin{align}
&\nabla\times\nabla\times {\bs B}+\kappa_0 \, {\bs B}={\bs g},\;\;\;\;\;\;\qquad {\rm in}\;\;\Omega, \label{eq: 2dsys1Bmodel}\\
& {\bs n}\cdot \bs B=0,\;\; \bs n\times (\nabla\times {\bs B})=\bs 0,\;\quad {\rm at}\;\;\partial\Omega, \label{eq: 2dsys1bcBmodel}
\end{align}
\end{subequations}
and the Oseen problem of the velocity field
 \begin{subequations}\label{eq:umodelsystem}
\begin{align}
&-\nabla^2 {\bs u}+\kappa_1 \, {\bs u}+\nabla p={\bs f},\;\;\;\; {\rm in}\;\;\Omega, \label{eq: 2dsys1umodel}\\
& \nabla \cdot \bs u=0,  \qquad \qquad \qquad \qquad  {\rm in}\;\;\Omega, \label{eq: 2dsys1udiv0}    \\
& \bs u= \bs 0,\qquad \qquad\qquad \qquad \quad \;\; {\rm at}\;\;\partial\Omega, \label{eq: 2dsys1bcumodel}
\end{align}
\end{subequations}
where $\kappa_0,\kappa_1$ are given constants, and ${\bs g}$ is assumed to be solenoidal. 

Let us employ the $\bs H_0({\rm div})$- and $\bs H_0^1$-conforming divergence-free bases to discretize these two model problems, respectively, and the related approximation schemes take the form
\begin{itemize}
\item find $\bs B_N \in {\bs H}_{N,0}({\rm div0}; \Lambda^d)$ such that
\begin{equation}\label{eq: Bwkbih1}
(\nabla \times \bs B_N, \nabla \times \bs \Phi)+ \kappa_0 (\bs B_N, \bs \Phi)  = (\bs g, \bs \Phi)    ,\;\; \forall \bs \Phi \in {\bs H}_{N,0}({\rm div0}; \Lambda^d), 
\end{equation}
\item and find $\bs u_{N}\in {\bs H}^1_{N,0}({\rm div0},\Lambda^d)$ such that
\begin{equation}\label{eq: VelwkLap1}
(\nabla \bs u_N, \nabla \bs \chi) + \kappa_1 (\bs u_N, \bs \chi)=(\bs f,\bs \chi),\;\; \forall \bs \chi \in {\bs H}^1_{N,0}({\rm div0},\Lambda^d).
\end{equation}
\end{itemize}

The model problem \eqref{eq: Bwkbih1} can be solved efficiently by a matrix diagonalisation algorithm, which has been reported by the authors in a separate paper (see a detailed description of the algorithm in \cite{qin2023highly}). Thus, we focus on the solution algorithm for \eqref{eq: VelwkLap1}. The following proposition is the key to the efficiency of our algorithms. 
\begin{prop}
\label{Prop5-GHKMat}
Denote symmetric matrices $\mathbf{G}=(G_{mn})$, $\mathbf{H}=(H_{mn})$ and $\mathbf{I}=(I_{mn})$, 
where $1\leq{m,n}\leq{N-3}$ and the entries are given by
\begin{equation}
\label{eq:GHKdef}
G_{mn} = \big( \varphi_{n+3}, \varphi_{m+3} \big)_{\Lambda}, \;\; 
H_{mn} =  \big( \psi_{n+2}, \psi_{m+2} \big)_{\Lambda} , \;\;
I_{mn} = \big( \psi'_{n+2},\psi'_{m+2} \big)_{\Lambda}
= \big( \phi_{n+1},\phi_{m+1} \big)_{\Lambda}.
\end{equation}
Then there hold
\begin{align}
& G_{mn} = 
\begin{cases}
\dfrac{1}{2n+3}\Big( \dfrac{1}{(2n-1)(2n+1)^2} + \dfrac{2(2n+3)}{(2n+1)^2(2n+5)^2} + \dfrac{1}{(2n+5)^2(2n+7)}  \Big), \; & m=n,  
\\[6pt]
- \dfrac{1}{\sqrt{2n+3}} \dfrac{1}{\sqrt{2n+7}}\dfrac{2}{(2n+5)^2}\Big( \dfrac{1}{2n+1} + \dfrac{1}{2n+9} \Big) , \; & m=n+2, 
\\[6pt]
\dfrac{1}{\sqrt{2n+3}}\dfrac{1}{\sqrt{2n+11}}\dfrac{1}{2n+5}\dfrac{1}{2n+7}\dfrac{1}{2n+9},
\; & m=n+4,
\\[6pt]
0, \; & \text{otherwise};
\end{cases}
\label{eq:Gmn}
\\
& H_{mn} =
\begin{cases}
\dfrac{1}{2n+3}\big( \dfrac{1}{2n+1}+\dfrac{1}{2n+5}  \big),       \quad & m=n,
\\[6pt]
- \dfrac{1}{\sqrt{2n+3}}\dfrac{1}{\sqrt{2n+7}}\dfrac{1}{2n+5} ,  \quad & m=n+2, 
\\[6pt]
0,\quad & {\rm otherwise};
\end{cases}
\qquad \; 
I_{mn} = \delta_{{m}{n}}.
\label{eq:MmnKmn}
\end{align}
In addition, denote the column vector $\widetilde{\bs H}=(\widetilde{H}_m)$ with $1\leq{m}\leq{N-3}$, given by
\begin{equation}\label{eq: tHm}
\widetilde{H}_m=(\psi_2,\psi_{m+2})_{\Lambda}=
\begin{cases}
 -\dfrac{\sqrt{3}}{15\sqrt{7}}, & m=2,\\
 0, & {\rm otherwise}.
\end{cases}
\end{equation}

\begin{proof}
Denote $c_{n}=\sqrt{2(2n+1)}/2.$ It is known that the generalised Jacobi polynomials with negative integer parameters  can be expressed as compact combination of Legendre polynomials. Indeed, one has (see \cite[p.~203]{ShenTangWang2011})
\begin{align}
& \varphi_{m+3}(\xi)=\frac{1}{4c_{m+1}}\Big(\frac{1}{c_m^2}L_{m-1}(\xi) -2\Big( \frac{c_{m+1}}{c_m c_{m+2}} \Big)^2 L_{m+1}+\frac{1}{c_{m+2}^2}L_{m+3}(\xi)  \Big),\\
& \psi_{m+2}(\xi)=\frac{1}{2c_{m+1}} \Big( L_{m+2}(\xi)-L_m(\xi)  \Big).
\end{align}
Moreover, $\psi_2(\xi)$ can be expressed in terms of Legendre polynomials as $\psi_2(\xi)= \big(L_2(\xi)-L_0(\xi)\big)/\sqrt{6}.$
By the analytic expressions of $\varphi_{m+3},$ $\psi_{m+2}$ and $\psi_2$ and the orthogonality of Legendre polynomials in equation \eqref{eq: L1orth}, we can derive equations \eqref{eq:Gmn}-\eqref{eq: tHm} with a direct calculation.
\end{proof}
\end{prop}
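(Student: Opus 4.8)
The plan is to reduce everything to the explicit Legendre expansions of $\varphi_{m+3}$, $\psi_{m+2}$ and $\psi_2$, and then invoke orthogonality of Legendre polynomials. First I would record, as in the proof sketch, the known compact formulas
\begin{equation}\label{eq: plan-expan}
\varphi_{m+3}=\frac{1}{4c_{m+1}}\Big(\frac{1}{c_m^2}L_{m-1}-2\Big(\frac{c_{m+1}}{c_m c_{m+2}}\Big)^2 L_{m+1}+\frac{1}{c_{m+2}^2}L_{m+3}\Big),\quad \psi_{m+2}=\frac{1}{2c_{m+1}}\big(L_{m+2}-L_m\big),
\end{equation}
with $c_n=\sqrt{2(2n+1)}/2$, and check that these are consistent with the normalizations \eqref{eq:varphipsi} and \eqref{eq: psiphi} (this only requires comparing leading coefficients and the explicit low-index cases in \eqref{eq: gJp}, and matches the reference \cite[p.~203]{ShenTangWang2011}). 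I would also note the special case $\psi_2=(L_2-L_0)/\sqrt{6}$, obtained directly from \eqref{eq: psiphi} and \eqref{eq: gJp}. These three identities are the only structural inputs; everything else is bookkeeping.

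Next I would compute $H_{mn}=(\psi_{n+2},\psi_{m+2})_\Lambda$. Writing each factor via \eqref{eq: plan-expan}, $(\psi_{n+2},\psi_{m+2})_\Lambda$ is a linear combination of the four inner products $(L_{n+2},L_{m+2}),(L_{n+2},L_m),(L_n,L_{m+2}),(L_n,L_m)$, each of which is $\frac{2}{2k+1}\delta$ by \eqref{eq: L1orth}. Since the indices involved are $m,m+2$ and $n,n+2$, a nonzero contribution forces $m=n$ (giving two surviving terms, $\|L_{n+2}\|^2$ and $\|L_n\|^2$) or $m=n+2$ (giving one surviving term, $-(L_{n+2},L_{n+2})$ since $m=n+2$); all other separations give zero. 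Substituting $\|L_k\|^2=2/(2k+1)$ and the prefactors $1/(4c_{n+1}c_{m+1})$ yields exactly \eqref{eq:MmnKmn}. The identity $I_{mn}=\delta_{mn}$ is immediate from $\psi'_{m+2}=\phi_{m+1}$ (equation \eqref{eq:psirelaphi}, shifted) together with the fact that $\{\phi_k\}$ is $L^2$-orthonormal by construction \eqref{eq: psiphi} and \eqref{eq: L1orth}. Likewise $\widetilde H_m=(\psi_2,\psi_{m+2})_\Lambda$: expanding $\psi_2=(L_2-L_0)/\sqrt 6$ against $\psi_{m+2}=\frac{1}{2c_{m+1}}(L_{m+2}-L_m)$, the only overlap of index sets $\{0,2\}$ and $\{m,m+2\}$ occurs when $m=2$ (so that $L_2$ appears in both), giving a single term $-\|L_2\|^2/(2\sqrt6\,c_3)$, which simplifies to $-\sqrt3/(15\sqrt7)$.

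For $G_{mn}=(\varphi_{n+3},\varphi_{m+3})_\Lambda$ the same mechanism applies but with each factor a three-term combination supported on $\{m-1,m+1,m+3\}$ and $\{n-1,n+1,n+3\}$. The supports intersect precisely when $m-n\in\{-4,-2,0,2,4\}$; by symmetry it suffices to treat $m=n,\,n+2,\,n+4$. For $m=n$ all three diagonal overlaps survive ($L_{n-1},L_{n+1},L_{n+3}$), giving three terms; for $m=n+2$ two overlaps survive ($L_{n+1}$ shared as the ``$+1$'' mode of $n$ and the ``$-1$'' mode of $n+2$, and $L_{n+3}$ shared as the ``$+3$'' of $n$ and the ``$+1$'' of $n+2$); for $m=n+4$ exactly one overlap survives ($L_{n+3}$, the ``$+3$'' mode of $n$ meeting the ``$-1$'' mode of $n+4$). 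Plugging $\|L_k\|^2=2/(2k+1)$ and the coefficients from \eqref{eq: plan-expan}, and translating $c_k^2=(2k+1)/2$ into the $(2n+\cdot)$ shorthand, produces \eqref{eq:Gmn} after simplification.

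I do not expect a genuine obstacle here — the proposition is a computational lemma — but the one place demanding care is the index-shift accounting in $G_{mn}$: the expansion index in \eqref{eq: plan-expan} is $m$, not $m+3$, so the Legendre modes present in $\varphi_{m+3}$ are $L_{m-1},L_{m+1},L_{m+3}$, and it is easy to be off by two when matching these across $m$ and $n$ and when converting the resulting $1/(2k+1)$ factors into the stated form with arguments $2n+1,\dots,2n+11$. I would therefore carry out the $G$ computation by first listing, for each of the cases $m=n,n+2,n+4$, exactly which pair(s) of Legendre indices coincide, write the surviving sum symbolically, and only then substitute numbers; the normalization constants $c_{m+1},c_{m+2}$ should be kept as $c$'s until the very end to avoid sign and factor errors. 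The remaining entries ($H$, $I$, $\widetilde H$) are short enough that a direct substitution suffices.
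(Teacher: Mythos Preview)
Your proposal is correct and follows essentially the same approach as the paper: write $\varphi_{m+3}$, $\psi_{m+2}$, and $\psi_2$ as short Legendre combinations and then use the $L^2$-orthogonality \eqref{eq: L1orth} to read off which index overlaps survive. The paper's proof simply states the expansions and invokes ``direct calculation,'' while you have spelled out the case analysis (which Legendre modes coincide for each $m-n$ offset) that constitutes that calculation.
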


 Let us start with the algorithm for two dimensional case.  Expanding the numerical solution $\bs u_N(\bs x)$ of the approximation scheme \eqref{eq: VelwkLap1} in terms of ${\bs H}^1_{N,0}({\rm div0},\Lambda^2)$ in proposition \ref{prop: h12d} as $\bs u_N=\sum_{m,n=1}^{N-3} u_{mn}\, \bs \chi_{mn},$
and denote
\begin{equation*}\label{eq: umatrix}
f_{mn}=\big(\bs f,\bs \chi_{mn}\big),\quad \mathbf{F}=(f_{mn})_{m,n=1,\cdots, N-3},\quad \mathbf{U}=(u_{mn})_{m,n=1,\cdots, N-3}.
\end{equation*}
One can take $\bs \chi=\bs \chi_{\hat m \hat n}$ in equation \eqref{eq: VelwkLap1} for $\hat m,\hat n=1,\cdots, N-1$,  and aware of the definitions of matrices $\mathbf{G}$, $\mathbf{H}$ and $\mathbf{I}$ in equations \eqref{eq:GHKdef}-\eqref{eq:MmnKmn} to obtain the following matrix equation 
\begin{equation}\label{eq: 2dvelaxb}
2\mathbf{H} \mathbf{U}  \mathbf{H} +   \mathbf{G}\mathbf{U} +   \mathbf{U} \mathbf{G}+   \kappa_1 \big( \mathbf{G}\mathbf{U} \mathbf{H}+\mathbf{H}\mathbf{U} \mathbf{G}  \big)=\mathbf{F}.
\end{equation}
Since $\mathbf{G}$ and $\mathbf{H}$ are respectively  nona-diagonal and penta-diagonal, the coefficient matrix of the above linear algebraic system \eqref{eq: 2dvelaxb}  is highly sparse. Thus, efficient solvers based on factorization \cite{golub2013matrix, trefethen2022numerical} or matrix-free preconditioned conjugate gradient method (cf. \cite{kelley1995iterative, saad2003iterative}) can be adopted to solve this system.

Next, we turn to the solution algorithm for the three dimensional case.  We employ the divergence-free basis functions of ${\bs H}^1_{N,0}({\rm div0},\Lambda^3)$   in proposition \ref{prop: prop4} to expand $\bs u_N$  as
\begin{equation}\label{eq:uN_3d}
\bs u_N = \sum_{m,n,l=1}^{N-3}
\Big\{  u_{mnl}^1 {\bs\chi}_{m,n,l}^1 + u_{mnl}^2 {\bs\chi}_{m,n,l}^2 \Big\} +\sum_{n,l=1}^{N-3}  u_{nl}^x {\bs\chi}_{n,l}^x + \sum_{m,l=1}^{N-3} u_{ml}^y {\bs\chi}_{m,l}^z  + \sum_{m,n=1}^{N-3} u_{mn}^z {\bs\chi}_{m,n}^z.
\end{equation}
and denote the source terms by
\begin{align}
& f_{mnl}^i = \big( \bs{f},{\bs\chi}_{m,n,l}^i \big),\;\; i=1,2,\;\;  f_{nl}^x = \big( \bs{f},{\bs \chi}_{n,l}^x \big),\;\; 
f_{ml}^y = \big( \bs{f}, {\bs\chi}_{m,l}^y \big),\;\; f_{mn}^z = \big( \bs{f},{\bs\chi}_{m,n}^z \big).
\label{eq:Hmatrix3d}
\end{align}
Let us remap the multidimensional variables $\{u^1_{mnl}, u^2_{mnl}, u^x_{nl},u^y_{ml}, u^z_{mn}  \}$ and the source terms $\{F^1_{mnl},$ $F^2_{mnl},$ $F^x_{nl},$ $F^y_{ml}, F^z_{mn}\}$ into vector forms $\{ \vec{u}^1, \vec{u}^2,\vec{u}^3, \vec{u}^x,\vec{u}^y,\vec{u}^z   \}$ and $\{ \vec{f}^1, \vec{f}^2,\vec{f}^3, \vec{f}^x,\vec{f}^y,\vec{f}^z   \}$ following the column-wise linear index order.
%
Taking $\bs \chi=\big\{{\bs\chi}_{\hat{m},\hat{n},\hat{l}}^1\,,\; {\bs\chi}_{\hat{m},\hat{n},\hat{l}}^2\,,\; {\bs\chi}_{\hat{n},\hat{l}}^x,\; {\bs\chi}_{\hat{m},\hat{l}}^y,\; {\bs\chi}_{\hat{m},\hat{n}}^z   \big \}$ in the approximation scheme \eqref{eq: VelwkLap1}, one arrives at
\begin{equation}\label{eq:StokesEq3d}
\begin{aligned}
& 
\mathbf{A}_{11} \vec{u}^1 + \mathbf{A}_{12} \vec{u}^2 + \mathbf{A}_{13} \vec{u}^x + \mathbf{A}_{14} \vec{u}^y + \mathbf{A}_{15} \vec{u}^z  = \vec{f}^1, 
\\
&
\mathbf{A}_{21} \vec{u}^1 + \mathbf{A}_{22} \vec{u}^2 + \mathbf{A}_{23} \vec{u}^x + \mathbf{A}_{24} \vec{u}^y + \mathbf{A}_{25} \vec{u}^z  = \vec{f}^2, 
\\
&
\mathbf{A}_{31} \vec{u}^1 + \mathbf{A}_{32} \vec{u}^2 + \mathbf{A}_{33} \vec{u}^x + \mathbf{A}_{34} \vec{u}^y + \mathbf{A}_{35} \vec{u}^z  = \vec{f}^x, 
\\
&
\mathbf{A}_{41} \vec{u}^1 + \mathbf{A}_{42} \vec{u}^2 + \mathbf{A}_{43} \vec{u}^x + \mathbf{A}_{44} \vec{u}^y + \mathbf{A}_{45} \vec{u}^z  = \vec{f}^y, 
\\
&
\mathbf{A}_{51} \vec{u}^1 + \mathbf{A}_{52} \vec{u}^2 + \mathbf{A}_{53} \vec{u}^x + \mathbf{A}_{54} \vec{u}^y + \mathbf{A}_{55} \vec{u}^z  = \vec{f}^z,
\end{aligned}
\end{equation}
where the coefficient matrices $\mathbf{A}_{11}-\mathbf{A}_{55}$ are given by
\begin{subequations}
\mathleft
\begin{equation}\label{eq:Stokes3d_A11} 
\begin{aligned}
\mathbf{A}_{11} :=& 2 \mathbf{H}\otimes{ \mathbf{H}}\otimes{\mathbf{H}} + \mathbf{I}\otimes{\mathbf{H}}\otimes{\mathbf{G}} +  \mathbf{H}\otimes{\mathbf{I}}\otimes{\mathbf{G}} + \mathbf{I}\otimes{\mathbf{G}}\otimes{\mathbf{H}} + \mathbf{H}\otimes{\mathbf{G}}\otimes{\mathbf{I}}\\
& + \kappa_1 \big( \mathbf{H}\otimes{\mathbf{H}}\otimes{\mathbf{G}} + \mathbf{H}\otimes{\mathbf{G}}\otimes{\mathbf{H}} \big),
\end{aligned}
\end{equation}
\begin{equation}\label{eq:Stokes3d_A22} 
\begin{aligned}
\mathbf{A}_{22} :=& 2 \mathbf{H}\otimes{\mathbf{H}}\otimes{\mathbf{H}} + \mathbf{I}\otimes{\mathbf{H}}\otimes{\mathbf{G}} +  \mathbf{H}\otimes{\mathbf{I}}\otimes{\mathbf{G}} + \mathbf{G}\otimes{\mathbf{I}}\otimes{\mathbf{H}} + \mathbf{G}\otimes{\mathbf{H}}\otimes{\mathbf{I}}\\
&+ \kappa_1 \big( \mathbf{H}\otimes{\mathbf{H}}\otimes{\mathbf{G}} + \mathbf{G}\otimes{\mathbf{H}}\otimes{\mathbf{H}} \big),
\end{aligned}
\end{equation}
\begin{equation}\label{eq:Stokes3d_A334455} 
\begin{aligned}
\mathbf{A}_{33}=\mathbf{A}_{44}=\mathbf{A}_{55} :=& \dfrac{4}{5} \mathbf{H}\otimes{\mathbf{H}} +  \big({\mathbf{H}}\otimes{\mathbf{G}} + {\mathbf{G}}\otimes{\mathbf{H}}\big)+  \dfrac{2}{5}\big(  {\mathbf{I}}\otimes{\mathbf{G}}  +{\mathbf{G}}\otimes{\mathbf{I}} \big) \\
&+ \dfrac{2}{5}\kappa_1 \big( {\mathbf{H}}\otimes{\mathbf{G}} + {\mathbf{G}}\otimes{\mathbf{H}} \big),
\end{aligned}
\end{equation}
\begin{equation}\label{eq:Stokes3d_A12} 
\mathbf{A}_{12} = \mathbf{A}_{21}^{\intercal} := \mathbf{H}\otimes{\mathbf{H}}\otimes{\mathbf{H}} + \mathbf{I}\otimes{\mathbf{H}}\otimes{\mathbf{G}} + \mathbf{H}\otimes{\mathbf{I}}\otimes{\mathbf{G}}+ \kappa_1 \mathbf{H}\otimes{\mathbf{H}}\otimes{\mathbf{G}},
\end{equation}
\begin{equation}\label{eq:Stokes3d_A13}
\mathbf{A}_{13} = \mathbf{A}_{31}^{\intercal} := - \big( \mathbf{H}\otimes{\mathbf{H}}\otimes{\widetilde{\bs H}} + \mathbf{I}\otimes{\mathbf{G}}\otimes{\widetilde{\bs H}}+ \kappa_1  \mathbf{H}\otimes{\mathbf{G}}\otimes{\widetilde{\bs H}} \big), 
\end{equation}
\begin{equation}\label{eq:Stokes3d_A14}
\mathbf{A}_{14} = \mathbf{A}_{41}^{\intercal} :=  \mathbf{H}\otimes{\widetilde{\bs H}}\otimes{\mathbf{H}} + \mathbf{I}\otimes{\widetilde{\bs H}}\otimes{\mathbf{G}}+ \kappa_1  \mathbf{H}\otimes{\widetilde{\bs H}}\otimes{\mathbf{G}}, 
\end{equation}
\begin{equation}\label{eq:Stokes3d_A15}
\mathbf{A}_{15} = \mathbf{A}_{51}^{\intercal} := 2 \widetilde{\bs H}\otimes{\mathbf{H}}\otimes{\mathbf{H}} + \widetilde{\bs H}\otimes{\mathbf{I}}\otimes{\mathbf{G}} + \widetilde{\bs H}\otimes{\mathbf{G}}\otimes{\mathbf{I}}+ \kappa_1 \big( \widetilde{\bs H}\otimes{\mathbf{H}}\otimes{\mathbf{G}} + \widetilde{\bs H}\otimes{\mathbf{G}}\otimes{\mathbf{H}} \big) , 
\end{equation}
\begin{equation}\label{eq:Stokes3d_A23}
\mathbf{A}_{23} = \mathbf{A}_{32}^{\intercal} :=  \mathbf{H}\otimes{\mathbf{H}}\otimes\widetilde{\bs H} + \mathbf{G}\otimes{\mathbf{I}}\otimes\widetilde{\bs H}+ \kappa_1 \mathbf{G}\otimes{\mathbf{H}}\otimes\widetilde{\bs H},
\end{equation}
\begin{equation}\label{eq:Stokes3d_A24}
\mathbf{A}_{24} =\mathbf{A}_{42}^{\intercal} := 2 \mathbf{H}\otimes{\widetilde{\bs H}}\otimes{\mathbf{H}} + \mathbf{I}\otimes{\widetilde{\bs H}}\otimes{\mathbf{G}} + \mathbf{G}\otimes{\widetilde{\bs H}}\otimes{\mathbf{I}}+ \kappa_1 \big( \mathbf{H}\otimes{\widetilde{\bs H}}\otimes{\mathbf{G}} + \mathbf{G}\otimes{\widetilde{\bs H}}\otimes{\mathbf{H}} \big) , 
\end{equation}
\begin{equation}\label{eq:Stokes3d_A25}
\mathbf{A}_{25} = \mathbf{A}_{52}^{\intercal} :=  \widetilde{\bs H}\otimes{\mathbf{H}}\otimes{\mathbf{H}} + \widetilde{\bs H}\otimes{\mathbf{I}}\otimes{\mathbf{G}}+ \kappa_1 \widetilde{\bs H}\otimes{\mathbf{H}}\otimes{\mathbf{G}} , 
\end{equation}
\begin{equation}
\mathbf{A}_{34} = \mathbf{A}_{43}^{\intercal} := \mathbf{H}\otimes{\widetilde{\bs H}}\otimes{\widetilde{\bs H}^{\intercal}}+ \kappa_1 \mathbf{G}\otimes\widetilde{\bs H}\otimes{\widetilde{\bs H}^{\intercal}},
\end{equation}
\begin{equation}
\mathbf{A}_{35} = \mathbf{A}_{53}^{\intercal} := -\big( {\widetilde{\bs H}}\otimes{\mathbf{H}}\otimes{\widetilde{\bs H}^{\intercal}}+ \kappa_1 {\widetilde{\bs H}}\otimes{\mathbf{G}}\otimes{\widetilde{\bs H}^{\intercal}} \big),
\end{equation}
\begin{equation}
\mathbf{A}_{45} = \mathbf{A}_{54}^{\intercal} := {\widetilde{\bs H}}\otimes{\widetilde{\bs H}^{\intercal}}\otimes{\mathbf{H}}+ \kappa_1 {\widetilde{\bs H}}\otimes{\widetilde{\bs H}^{\intercal}}\otimes{\mathbf{G}}.
\end{equation}
\end{subequations}
Here, $\mathbf{G}$, $\mathbf{H}$, $\widetilde{\bs H}$ and $\mathbf{I}$ are defined in proposition \ref{Prop5-GHKMat} and $\otimes$ is the Kronecker product. In addition to the sparsity of $\mathbf{G}$, $\mathbf{H}$ and $\mathbf{I}$, $ \widetilde{\bs H}$ has only one nonzero entry, thus the coefficient matrix of the linear system \eqref{eq:StokesEq3d} is highly sparse, which can be solved efficiently as the two dimensional case in a similar manner.

\begin{rem}
Although we focus on the solution algorithms of the model problems \eqref{eq:Bmodelsystem} and \eqref{eq:umodelsystem}  on $\Lambda^d$,  general computational domain diffeomorphic to $\Lambda^d$ can be solved via Richardson iteration \cite{concus1973use} or matrix-free preconditioned Krylov subspace iterative method, with the help of efficient solution algorithms developed for $\Omega=\Lambda^d$ as preconditioners. 
\end{rem}

\section{DF-BDF-$k$ fully-discretized scheme}\label{sect: scheme}

In this section, we employ the proposed divergence-free bases to discretize the MHD system \eqref{eq: mhd}-\eqref{eq: compatible}. We emphasize that with the aid of these bases, pressure field is fully decoupled with the velocity field and unconditionally energy-stable schemes can be easily obtained.

\subsection{Formulation of numerical scheme}
Let us partition the time interval $[0,T]$ into $N_t$ equispaced sub intervals with $\tau=T/N_t$ being the time step size.  Denote $ t_n=n\tau,\, n=0,1,\cdots,N_t$ and $\chi^n$ the approximation of a generic variable $\chi$ at time $t_n$. The initial fields $\bs u^0$ and $\bs B^0$ are defined by
\begin{equation}\label{eq: inischeme}
\bs u^0(\bs x)=\bs u_{\rm in}(\bs x),\quad \bs B^0(\bs x)=\bs B_{\rm in}(\bs x).
\end{equation}
Given $(\bs u^n, \bs B^n)\in \bs H^1_{N,0}({\rm div}0; \Omega)\times \bs H_{N,0}({\rm div}0; \Omega)$, we seek  $(\bs u^{n+1}, \bs B^{n+1})\in \bs H^1_{N,0}({\rm div}0; \Omega)\times \bs H_{N,0}({\rm div}0; \Omega)$ such that
\begin{subequations}\label{eq: scheme}
\mathleft
\begin{equation} \label{eq: scheme01}
\qquad
\begin{aligned}
 \frac{1}{\tau} \big({\gamma {\bs u}^{n+1}-\hat{\bs u}}, \bs \chi \big)+\big( \tilde{\bs u}^{n+1}\cdot \nabla \bs u^{n+1},\bs \chi  \big)&+\nu \big(\nabla \bs u^{n+1},\nabla \bs \chi  \big)\\
&= \frac{1}{{ \mu \rho}}\big( {\nabla \times \bs B^{n+1} \times \tilde {\bs B}^{n+1}}, \bs \chi \big) +(\bs f^{n+1}, \bs \chi),
\end{aligned}
\end{equation}
\begin{equation}\label{eq: scheme02}
\qquad
\frac{1}{\tau} \big({\gamma \bs B^{n+1} -\hat {\bs B}},\bs \Phi \big)+\eta \big(\nabla \times \bs B^{n+1}, \nabla \times \bs \Phi \big)-\big(\nabla \times (\bs u^{n+1}\times \tilde{\bs B}^{n+1}),\bs \Phi \big)=0, 
\end{equation}
\end{subequations}
for  $\forall\; (\bs \chi, \bs \Phi) \in \bs H^1_{N,0}({\rm div}0; \Omega)\times \bs H_{N,0}({\rm div}0; \Omega).$ 

In the above, the notation $({\gamma \chi^{n+1}-\hat \chi})/{\tau}$ represents a $k$-th order backward differentiation formula and ${\tilde \chi}^{n+1} $ represents the $k$-th order explicit approximation of $\chi$ at time $t_{n+1}$. For $k=1,2$, $\hat \chi$, $\gamma$ and ${\tilde \chi}^{n+1} $ are given by
\begin{equation}
\hat \chi=
\begin{cases}
 \chi^n,   & k=1\\[5pt]
 2\chi^n-\chi^{n-1}/2, & k=2 
 \end{cases},\quad 
\gamma=
\begin{cases}
1, & k=1\\[5pt]
{3}/{2}, & k=2
\end{cases}, \quad 
\tilde{\chi}^{n+1}=
\begin{cases}
 \chi^n, & k=1\\
 2\chi^n-  \chi^{n-1}, & k=2
\end{cases} \,.
\end{equation}

\subsection{Discrete energy dissipation law}
The proposed numerical scheme satisfies a discrete energy dissipative law, thus are suitable for long time simulations with large time step sizes. We present the discrete energy law as follows.

\begin{thm}
In the absence of the external force $\bs f^{n+1}$, the numerical scheme consisting of equations \eqref{eq: scheme01}-\eqref{eq: scheme02} satisfies the following property:
\begin{equation}\label{eq: dislaw}
\mathcal{E}^{n+1}-\mathcal{E}^n=-\mathcal{D}^{n+1} -\nu \| \nabla \bs u^{n+1}\| -\frac{\eta}{\mu \rho}\|\nabla\times \bs B^{n+1} \|^2,
\end{equation}
where $\mathcal{E}^n$ is the discrete energy
\begin{equation}\label{eq: Elaw}
\mathcal{E}^n=
\begin{cases}
\dfrac{1}{2\tau}\Big(  \|\bs u^{n} \|^2 +\dfrac{1}{\mu\rho}\|\bs B^{n} \|^2    \Big), & k=1,\\[5pt]
\dfrac{1}{2\tau}\Big( { \|\bs u^{n} \|^2 +\| 2\bs u^{n}-\bs u^{n-1}\|^2}  +\dfrac{1}{\mu\rho}{ \|\bs B^{n} \|^2 +\dfrac{1}{\mu\rho} \| 2\bs B^{n}-\bs B^{n-1}\|^2}    \Big), & k=2;
\end{cases}
\end{equation}
and $\mathcal{D}^{n+1}$ is the artificial dissipation term
\begin{equation}\label{eq: Dlaw}
\mathcal{D}^{n+1}=
\begin{cases}
 \dfrac{1}{2\tau} \Big(    \|\bs u^{n+1}-\bs u^n \|^2 +  \dfrac{1}{\mu\rho} \|\bs u^{n+1}-\bs u^n \|^2 \Big), & k=1,\\
 \dfrac{1}{2\tau}\Big(   {\big\| \bs u^{n+1}-2\bs u^n+\bs u^{n-1} \big\|^2} +\dfrac{1}{\mu\rho }  {\big\| \bs B^{n+1}-2\bs B^n+\bs B^{n-1} \big\|^2}    \Big), & k=2.
\end{cases}
\end{equation}
\end{thm}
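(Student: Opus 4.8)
The plan is to run the standard energy-testing argument for the coupled system, exploiting the fact that the discrete solution and its explicit extrapolations all lie in the exactly divergence-free, homogeneous-boundary spectral spaces. First I would take $\bs\chi=\bs u^{n+1}$ in \eqref{eq: scheme01} and $\bs\Phi=(\mu\rho)^{-1}\bs B^{n+1}$ in \eqref{eq: scheme02} (both are admissible, since the approximation spaces are linear), set $\bs f^{n+1}=\bs 0$, and add the two identities. The viscous and resistive terms immediately produce $\nu\|\nabla\bs u^{n+1}\|^2$ and $(\eta/\mu\rho)\|\nabla\times\bs B^{n+1}\|^2$, so only three ``non-dissipative'' terms have to be dealt with.

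The convective term $(\tilde{\bs u}^{n+1}\cdot\nabla\bs u^{n+1},\bs u^{n+1})$ vanishes: $\tilde{\bs u}^{n+1}$ is a linear combination of $\bs u^n$ (and $\bs u^{n-1}$), hence lies in $\bs H^1_{N,0}({\rm div}0;\Omega)$, so $\nabla\cdot\tilde{\bs u}^{n+1}=0$ and $\tilde{\bs u}^{n+1}|_{\partial\Omega}=\bs 0$, and integration by parts rewrites the term as $-\tfrac12(\nabla\cdot\tilde{\bs u}^{n+1},|\bs u^{n+1}|^2)=0$. The Lorentz coupling term and the induction coupling term cancel each other: by the scalar triple-product identity the Lorentz term, once moved to the left side, equals $+(\mu\rho)^{-1}(\bs u^{n+1}\times\tilde{\bs B}^{n+1},\nabla\times\bs B^{n+1})$, while integration by parts for the curl operator — whose boundary integral $\int_{\partial\Omega}\big((\bs u^{n+1}\times\tilde{\bs B}^{n+1})\times\bs n\big)\cdot\bs B^{n+1}$ vanishes because $\bs u^{n+1}|_{\partial\Omega}=\bs 0$ — turns the induction term into $-(\mu\rho)^{-1}(\bs u^{n+1}\times\tilde{\bs B}^{n+1},\nabla\times\bs B^{n+1})$. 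Hence the summed identity collapses to
\[ \tfrac1\tau\big(\gamma\bs u^{n+1}-\hat{\bs u},\bs u^{n+1}\big)+\tfrac1{\mu\rho\,\tau}\big(\gamma\bs B^{n+1}-\hat{\bs B},\bs B^{n+1}\big)+\nu\|\nabla\bs u^{n+1}\|^2+\tfrac{\eta}{\mu\rho}\|\nabla\times\bs B^{n+1}\|^2=0 . \]

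Finally I would apply the backward-difference polarization identities to the two time-difference inner products: for $k=1$, $\langle a-b,a\rangle=\tfrac12(\|a\|^2-\|b\|^2+\|a-b\|^2)$; for $k=2$, $\langle\tfrac32 a-2b+\tfrac12 c,\,a\rangle=\tfrac14\big(\|a\|^2+\|2a-b\|^2-\|b\|^2-\|2b-c\|^2+\|a-2b+c\|^2\big)$, with $(a,b,c)=(\bs u^{n+1},\bs u^n,\bs u^{n-1})$ and, separately, $(\bs B^{n+1},\bs B^n,\bs B^{n-1})$. Regrouping the ``time-level $n+1$'' and ``time-level $n$'' contributions gives the telescoping difference $\mathcal E^{n+1}-\mathcal E^n$ with $\mathcal E^n$ as in \eqref{eq: Elaw}, and the squared-increment remainders assemble into $\mathcal D^{n+1}$ as in \eqref{eq: Dlaw}, which is exactly \eqref{eq: dislaw}. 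The only real obstacle is the bookkeeping in the coupling step: getting the sign in the vector triple-product identity and the orientation of the curl integration-by-parts boundary term right so that the Lorentz and induction contributions cancel rather than reinforce, together with a careful expansion to verify the BDF-2 polarization identity. Conceptually, the argument relies crucially on the \emph{exact} membership of $\bs u^{n+1}$ and $\tilde{\bs u}^{n+1}$ in the divergence-free, homogeneous-boundary space, which is what annihilates the convective term and the coupling boundary term without residue.
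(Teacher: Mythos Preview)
Your proposal is correct and follows essentially the same route as the paper: test with $\bs\chi=\bs u^{n+1}$ and $\bs\Phi=(\mu\rho)^{-1}\bs B^{n+1}$, kill the convective term via $\nabla\cdot\tilde{\bs u}^{n+1}=0$ and the boundary condition, cancel the Lorentz and induction contributions through the triple-product identity plus curl integration by parts, and then apply the BDF-$1$/BDF-$2$ polarization identities to extract the telescoping energy and the artificial dissipation. Your justification for the vanishing of the coupling boundary term (via $\bs u^{n+1}|_{\partial\Omega}=\bs 0$) is in fact slightly more explicit than the paper's statement of identity \eqref{eq: id2}, but the underlying argument is the same.
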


\begin{proof}
Taking $\bs \chi=\bs u^{n+1}$ and $\bs \Phi={(\mu \rho)}^{-1} \bs B^{n+1}$ in equations \eqref{eq: scheme}, summing up the resultant equations and using the following identities 
\begin{align}
& \big(  \tilde{\bs u}^{n+1}\cdot \nabla \bs u^{n+1}, \bs u^{n+1}   \big)=0,\;\;{\rm with}\;\; \nabla \cdot \tilde {\bs u}^{n+1}=0,\;\; \bs n \cdot \tilde{\bs u}^{n+1}\big|_{\partial \Omega}=0; \label{eq: id1}\\
&\big(\nabla \times \bs B^{n+1} \times \tilde{\bs B}^{n+1}, \bs u^{n+1} \big)+ \big(\nabla \times(\bs u^{n+1}\times \tilde {\bs B}^{n+1}  ), \bs B^{n+1} \big)=0, \;\; \bs n \cdot \tilde{\bs B}^{n+1}\big|_{\partial \Omega} =\bs n\cdot \bs u^{n+1}\big|_{\Omega}=0,  \label{eq: id2}
\end{align}
we arrives at
\begin{equation}\label{eq: en1}
 \big(\frac{\gamma {\bs u}^{n+1}-\hat{\bs u}}{\tau}, \bs u^{n+1} \big)+ \big(\frac{\gamma \bs B^{n+1} -\hat {\bs B}}{\mu\rho\tau},\bs B^{n+1} \big)+\nu \|\nabla \bs u^{n+1} \|^2 +\frac{\eta}{\mu\rho} \|\nabla \times \bs B^{n+1} \|^2=(\bs f^{n+1}, \bs u^{n+1}). 
\end{equation}
With the help of the identities for a generic variable $\chi$
\begin{subequations}
\begin{equation}\label{eq: id3}
\chi^{n+1}(\chi^{n+1}-\chi^n)=\frac{1}{2}(\chi^{n+1})^2-\frac{1}{2}(\chi^n)^2 +\frac{1}{2}(\chi^{n+1}-\chi^n)^2,
\end{equation}
\begin{equation}\label{eq: id4}
\begin{aligned}
\chi^{n+1}(3\chi^{n+1}-4\chi^n+\chi^{n-1})&=\frac{1}{2}\Big( {(\chi^{n+1})^2+(2\chi^{n+1}-\chi^n)^2}  \Big)-\frac{1}{2} \Big( {(\chi^n)^2+(2\chi^n-\chi^{n-1})^2}\Big) \\
&\quad+\frac{1}{2}  {\big(\chi^{n+1}-2\chi^n+\chi^{n-1} \big)^2},
\end{aligned}
\end{equation}
\end{subequations}
we can transform equation \eqref{eq: en1} into
\begin{equation}
\begin{aligned}
&\frac{1}{2\tau}\Big(  \|\bs u^{n+1} \|^2 +\frac{1}{\mu\rho}\|\bs B^{n+1} \|^2    \Big)-\frac{1}{2\tau}\Big(  \|\bs u^{n} \|^2 +\frac{1}{\mu\rho}\|\bs B^{n} \|^2    \Big)+\nu \| \nabla \bs u^{n+1}\| +\frac{\eta}{\mu \rho}\|\nabla\times \bs B^{n+1} \|^2  \\
&=- \frac{1}{2\tau} \Big(    \|\bs u^{n+1}-\bs u^n \|^2 +  \frac{1}{\mu\rho} \|\bs u^{n+1}-\bs u^n \|^2 \Big)+(\bs f^{n+1}, \bs u^{n+1}),\;\;\;\; k=1;
\end{aligned}
\end{equation}
and
\begin{equation}
\begin{aligned}
& \frac{1}{2\tau}\Big( { \|\bs u^{n+1} \|^2 +\| 2\bs u^{n+1}-\bs u^n\|^2}  +\frac{1}{\mu\rho}{ \|\bs B^{n+1} \|^2 +\frac{1}{\mu\rho} \| 2\bs B^{n+1}-\bs B^n\|^2}    \Big) +\nu \| \nabla \bs u^{n+1}\|\\
&-\frac{1}{2\tau}\Big( { \|\bs u^{n} \|^2 +\| 2\bs u^{n}-\bs u^{n-1}\|^2}  +\frac{1}{\mu\rho}{ \|\bs B^{n} \|^2 +\frac{1}{\mu\rho} \| 2\bs B^{n}-\bs B^{n-1}\|^2}    \Big) +\frac{\eta}{\mu \rho}\|\nabla\times \bs B^{n+1} \|^2   \\
&= -\frac{1}{2\tau}\Big(   {\big\| \bs u^{n+1}-2\bs u^n+\bs u^{n-1} \big\|^2} +\frac{1}{\mu\rho }  {\big\| \bs B^{n+1}-2\bs B^n+\bs B^{n-1} \big\|^2}    \Big)+(\bs f^{n+1}, \bs u^{n+1}),\;\;\;\; k=2.
\end{aligned}
\end{equation}
The discrete energy dissipation law \eqref{eq: dislaw} can be obtained directly from the above by omitting the boundary force term $(\bs f^{n+1}, \bs u^{n+1})$ and noting the discrete energy and artificial dissipation in equations \eqref{eq: Elaw}-\eqref{eq: Dlaw}.
\end{proof}

%
The linear-coupled system \eqref{eq: scheme} can be effectively solved using a sub-iteration method. In the initial iteration step, we employ the substitution $\bs u^{n+1,(0)}=\tilde{\bs u}^{n+1}, \,\bs B^{n+1,(0)}=\tilde{\bs B}^{n+1}$ to replace $\bs u^{n+1}$ and $\bs B^{n+1}$ in the non-linear terms. This allows us to decouple the problem and solve for $\bs u^{n+1,(1)}$ and $\bs B^{n+1,(1)}$. Subsequently, we update $\bs u^{n+1,(0)}$ and $\bs B^{n+1,(0)}$ by $\bs u^{n+1,(1)}$ and $\bs B^{n+1,(1)}$ respectively, and solve the resultant decoupled equations. In each iteration step, it suffices to solve the following two decoupled equations:
\begin{itemize}
\item Find $\bs u^{n+1,(r+1)}\in \bs H^1_{N,0}({\rm div}0; \Omega)$ such that
\begin{equation}\label{eq: usolve}
 \big(\nabla \bs u^{n+1,(r+1)},\nabla \bs \chi  \big)+\kappa_1 \big({\bs u}^{n+1,(r+1)}, \bs \chi \big)=(\bs F,\bs \chi), \;\; \forall \bs \chi \in \bs H^1_{N,0}({\rm div}0; \Omega),
\end{equation}
where $\kappa_1:={\gamma}/{(\nu\tau)}$ and $ \bs F={\nu}^{-1}\Big(\bs f+{\tau}^{-1} \hat{\bs u}- \tilde{\bs u}^{n+1}\cdot \nabla \bs u^{n+1,(r)}+ {{( \mu \rho)}^{-1}} {\nabla \times \bs B^{n+1,(r)} \times \tilde {\bs B}^{n+1}}  \Big).$

\item Find $\bs B^{n+1,(r+1)}\in \bs H_{N,0}({\rm div}0; \Omega)$ such that
\begin{equation}\label{eq: Bsolve}
 \big(\nabla \times \bs B^{n+1,(r+1)}, \nabla \times \bs \Phi \big)+\kappa_0 \big(\bs B^{n+1,(r+1)},\bs \Phi \big)=(\bs J,\bs \Phi), \;\; \bs \Phi \in \bs H_{N,0}({\rm div}0; \Omega),
\end{equation}
where $\kappa_0={\gamma}/{(\eta\tau)}$ and $ \bs J={(\eta \tau)}^{-1} \hat{\bs B} +\eta^{-1} \nabla \times (\bs u^{n+1,(r)}\times \tilde{\bs B}^{n+1})  .
$
\end{itemize}
These two decoupled equations can be solved efficiently by the solution algorithms described in the previous section. We repeat the iterations until the following stopping criterion is satisfied:
\begin{equation}
{\rm max}\Big\{ \frac{\| \bs u^{n+1,(r+1)}-\bs u^{n+1,(r)}\|^2}{\| \bs u^{n+1,(r+1)}\|^2 }, \frac{\| \bs B^{n+1,(r+1)}-\bs B^{n+1,(r)}\|^2}{  \| \bs B^{n+1,(r+1)}\|^2 } \Big
\}< \epsilon.
\end{equation}
This sub-iteration scheme is quite efficient as it usually takes a few iterations (1$\sim$3) to converge for each time step for the proposed DF-BDF-$k$ schemes for high order schemes, even for very large time step sizes (see the comprehensive numerical experiments in Section \ref{sect: numer}).  

\begin{rem}
The proposed scheme \eqref{eq: scheme} can be readily extended to $k$-step BDF schemes ($k=3,\cdots, 6$) by defining $\hat \chi$, $\gamma$ and ${\tilde \chi}^{n+1} $ in Appendix \ref{ap: bdfk}. Though the general DF-BDF-$k$ schemes are not unconditionally energy-stable for $k=3,\cdots, 6$, numerical experiments demonstrate that these schemes are quite stable for large time step sizes when equipped with the divergence-free spectral bases, especially for $k=3,4$. We refer the readers to Section \ref{sect: numer} for the supportive examples. 
\end{rem}

\section{Representative numerical experiments}\label{sect: numer}
In this section, we conduct several representative numerical experiments in two and three dimensions to verify the accuracy, efficiency and stability of the proposed DF-BDF-$k$ schemes. We first employ contrived analytic solutions to demonstrate the spatial and temporal convergence rates of the proposed methods.  Simulation results of DF-BDF-3 to DF-BDF-6 using large time steps are provided to show the enhanced numerical  stability and accuracy of these schemes,  when equipped with the exact divergence-free basis functions. The efficiency of the proposed methods is shown with the iteration numbers of the sub-iteration method in each time step against large time step sizes.  We then use the driven cavity flow problem as a benchmark to demonstrate that the proposed methods can produce stable and physically accurate results.

\subsection{Two dimensional case}

\begin{example}[\bf Convergence test for DF-BDF-1 and -2 schemes in 2D.] \label{ex: ex1}

 We start with the spatial and temporal convergence tests of the proposed DF-BDF-$k$ schemes developed herein using a contrived analytic solution. Let us fix the computational domain $\Omega=\Lambda^2.$ The manufactured solution of the MHD system is given by
\begin{equation}\label{examp1}
\begin{split}
\bs u(\bs x,t) &= \nabla \times \Big[\big(1-x_1^2 \big)^3 \big(1-x_2^2 \big)^3\big(1+\sin(\pi x_1)\cos(\pi x_2)\big)\exp\big(\sin(\pi x_1)\cos(\pi x_2)/10 \big)\cos(t) \Big],\\
\bs B(\bs x,t) &= \nabla \times \Big[\big(1-x_1^2 \big)^3 \big(1-x_2^2 \big)^3\big(1+\sin(\pi x_1)\cos(\pi x_2)\big)\exp\big(\sin(\pi x_1)\cos(\pi x_2)/10 \big)\cos(t) \Big],
\end{split}
\end{equation}
such that $\bs u(\bs x,t)$ and $\bs B(\bs x,t)$ satisfy the divergence-free constraints \eqref{eq: divu}-\eqref{eq: divB} and the boundary condition \eqref{eq: mhdbc}, and the initial condition $\bs u_{\rm in}(\bs x):=\bs u(\bs x,t=0)$ and $\bs B_{\rm in}(\bs x):=\bs B(\bs x,t=0)$ satisfy compatible requirements \eqref{eq: compatible}. The pressure field $p$ is set with
\begin{equation}\label{eq: pressure2d}
p(\bs x,t)=10\big[(x_1-1/2)^3 \,x_2^2+(1-x_1)^3\,(x_2-1/2)^3\big].
\end{equation}
To guarantee that the above contrived solution satisfy the MHD system, a body force term $\bs g(\bs x,t)$ is added to equation \eqref{eq: Bmom} and the forcing terms $\bs f$ and $\bs g$ are chosen such that the manufactured solution satisfy
 \begin{subequations}\label{eq: mhdnew}
\begin{align}
&\partial_t \bs u+\bs u\cdot \nabla \bs u -\nu \nabla^2 \bs u+{\rho}^{-1}\nabla p -(\mu \rho)^{-1}(\nabla \times \bs B) \times \bs B= {\bs f}, \label{eq: umomforce}\\
& \partial_t \bs B-\nabla \times (\bs u\times \bs B)+\eta \nabla \times(\nabla \times \bs B)=\bs g. \label{eq: Bmomforce}
\end{align}
\end{subequations}
The parameters are prescribed as  $\nu=\rho=\mu=\eta=1$. We employ the proposed DF-BDF-$k$ scheme to numerically integrate the MHD system from $t=0$ to $t=T$. We prescribe the stopping tolerance $\epsilon=10^{-10}$ throughout the paper. The numerical errors between the numerical solution and the analytic solution of $\bs u$ and $\bs B$ are respectively measured under $\bs H^{1}$- and $\bs H({\rm curl})$-norms at $t=T$. 

We first conduct convergence tests for DF-BDF-$1$-$2$ schemes. In Figure \ref{fig_BDF12err_N2d} (a), we plot the numerical errors as a function of polynomial order $N$ with a fixed time step size $\tau=3.125\times 10^{-3}.$ It can be observed that the errors decay exponentially first and then level off at around $5\times 10^{-4}$ and $5\times 10^{-8}$, respectively for DF-BDF-$1$ and DF-BDF-$2$ schemes, showing a saturation due to temporal discretization error. Next, we fix the polynomial order $N=50$ and depict the errors as a function of $\tau$ in Figure \ref{fig_BDF12err_N2d} (b). First- and second-order convergence rates in time respectively for DF-BDF-$1$ and DF-BDF-$2$ schemes can be clearly observed. 


\end{example}

\begin{example}[\bf Convergence test for DF-BDF-$3$-$6$ schemes in 2D.] \label{ex: ex2}

It is known that implicit-explicit BDF-$k$ schemes for the discretization of MHD system based on projection method will encounter severe stability issue and often blow up even for very small time step sizes when $k\geq 3$. In the forthcoming two numerical experiments, we show that with the aid of the exact divergence-free basis, the stability of high-order BDF schemes are greatly enhanced, especially for $k=3,4$.  In Figure \ref{fig_BDF36err_N2d} (a)-(b), we fix the polynomial order $N=50$ and plot the history of errors against time step size $\tau$ for DF-BDF-$k$ schemes with $k=3,\cdots, 6$. It can be seen that DF-BDF-$3$-$4$ schemes exhibit clearly the third- and forth-order convergence rate in time. However, for  DF-BDF-$5$ scheme, the convergence rate is fifth-order for $\tau\geq 1.25\times 10^{-2}$ and slows down afterwards. As for DF-BDF-$6$ scheme, the errors first increase with the decrease of step size before $\tau=6.25\times 10^{-3}$ and then decrease to $10^{-11}$ as step size decreases, due to numerical instability.  


\end{example}

\begin{example}[\bf Efficiency of the sub-iteration method for DF-BDF-$k$ schemes in 2D.]\label{ex: ex3} 

In this example, we look into the efficiency of the sub-iteration algorithm for DF-BDF-$k$ schemes. We depict in Figure \ref{fig_iterStep_N2d} the iteration numbers as a function of time steps with a fixed polynomial order $N=50$. We adopt a large time step size $\tau=0.1$ first.  It can be seen in Figure \ref{fig_iterStep_N2d} (a) that the iteration numbers decrease as $k$ increase from 1 to 4. It is worthwhile to note that it usually takes only 1 iteration for DF-BDF-4 scheme to converge at such a large time step, demonstrating an exceptional efficiency of this method.  As for $k=5,6$, the iteration numbers abruptly increase due to numerical instability. We slightly decrease the step size to $\tau=0.01$ and conduct the same numerical experiment. It can be seen in Figure \ref{fig_iterStep_N2d} (b) that the iteration numbers of the DF-BDF-1 scheme are around 4$\sim$6, due to the lack of accuracy of the BDF-1 temporal discretization. While the sub-iteration method for DF-BDF-2-5 schemes are quite efficient, as they usually take only 1 iteration to converge. As for DF-BDF-6 scheme, the iteration numbers increase gradually with the time step, implying that the numerical solution obtained by this scheme gradually deviates from the true solution due to stability issue. 


\end{example}

\begin{figure}[htbp]
\begin{center}
  \subfigure[ Errors vs $N$]{ \includegraphics[scale=.38]{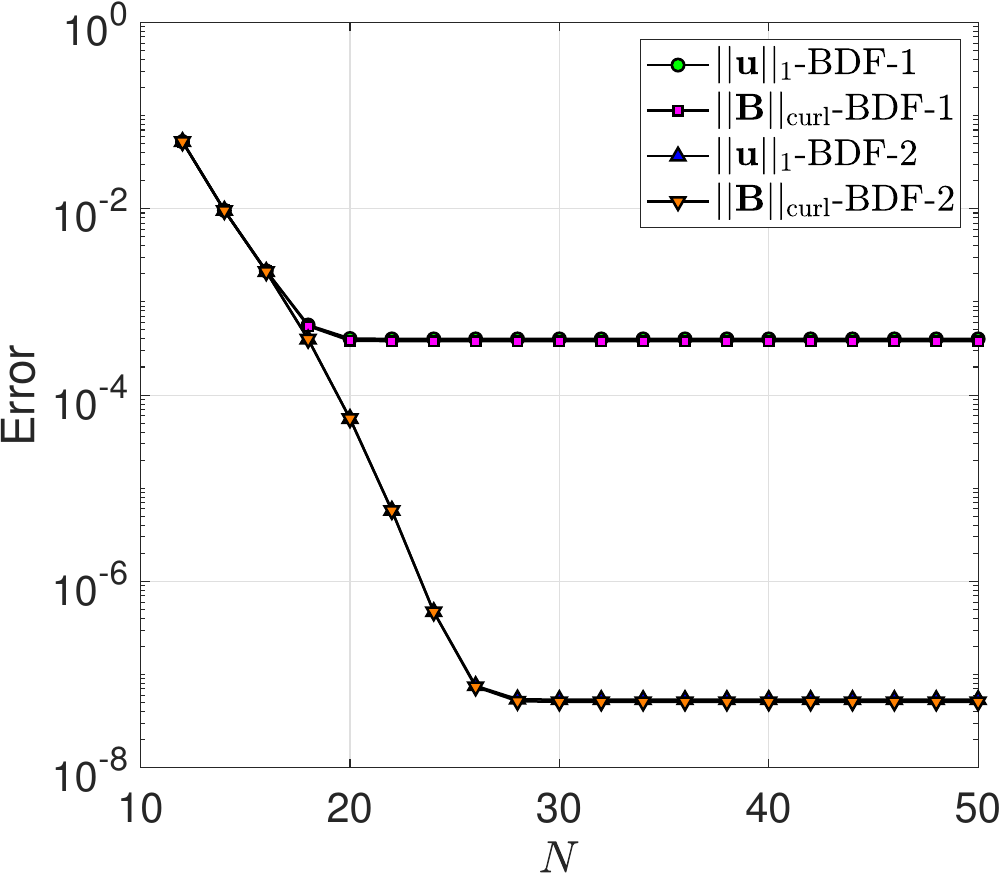}}\qquad \quad
  \subfigure[Errors vs $\tau$ ]{ \includegraphics[scale=.38]{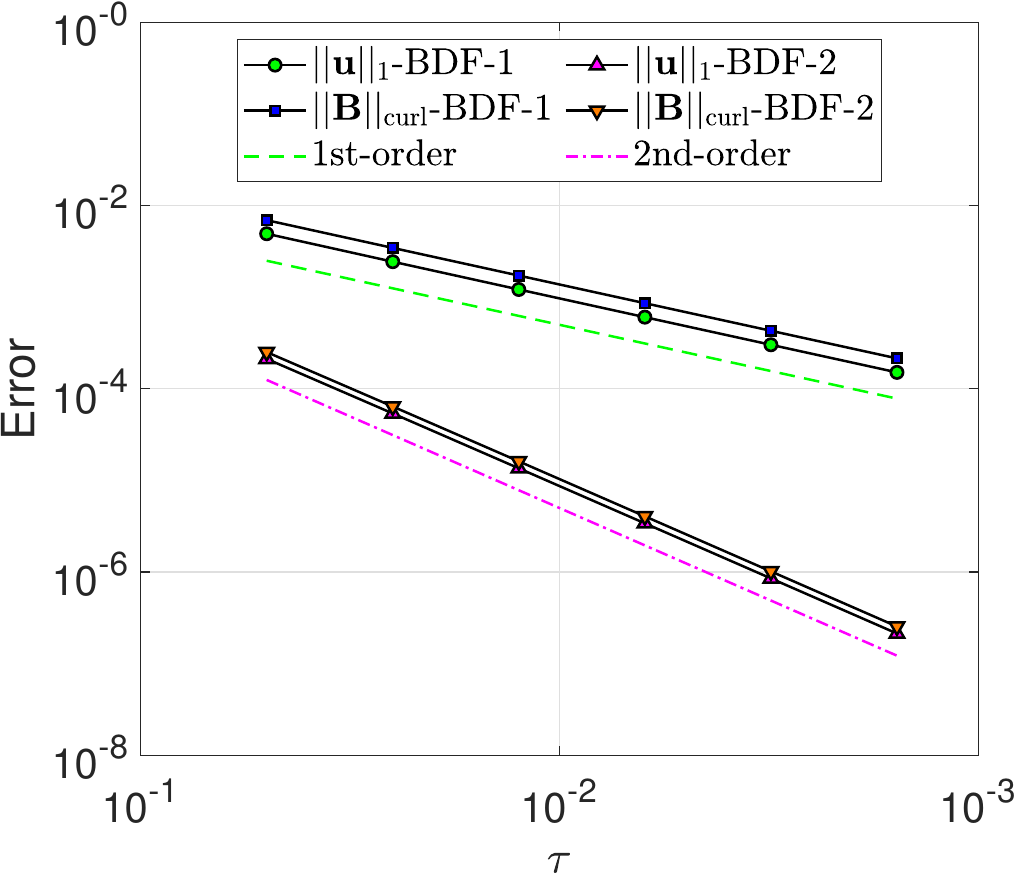}}  
 \caption{\small Spatial and temporal convergence test of DF-BDF-$k$ ($k=1,2$) schemes for solving the MHD system in 2D: (a) $\bs H^1$-errors of $\bs u$ and $\bs H({\rm curl})$-errors of $\bs B$ versus polynomial order $N$ (fixed $\tau=3.125\times 10^{-3}$ and $T=0.1$); (b) errors versus time step sizes $\tau$ (fixed polynomial order $N=50$ and $T=1$). } 
   \label{fig_BDF12err_N2d}
\end{center}
\end{figure}

 \begin{figure}[htbp]
\begin{center}
  \subfigure[Errors of $\bs u$ vs $\tau$]{ \includegraphics[scale=.38]{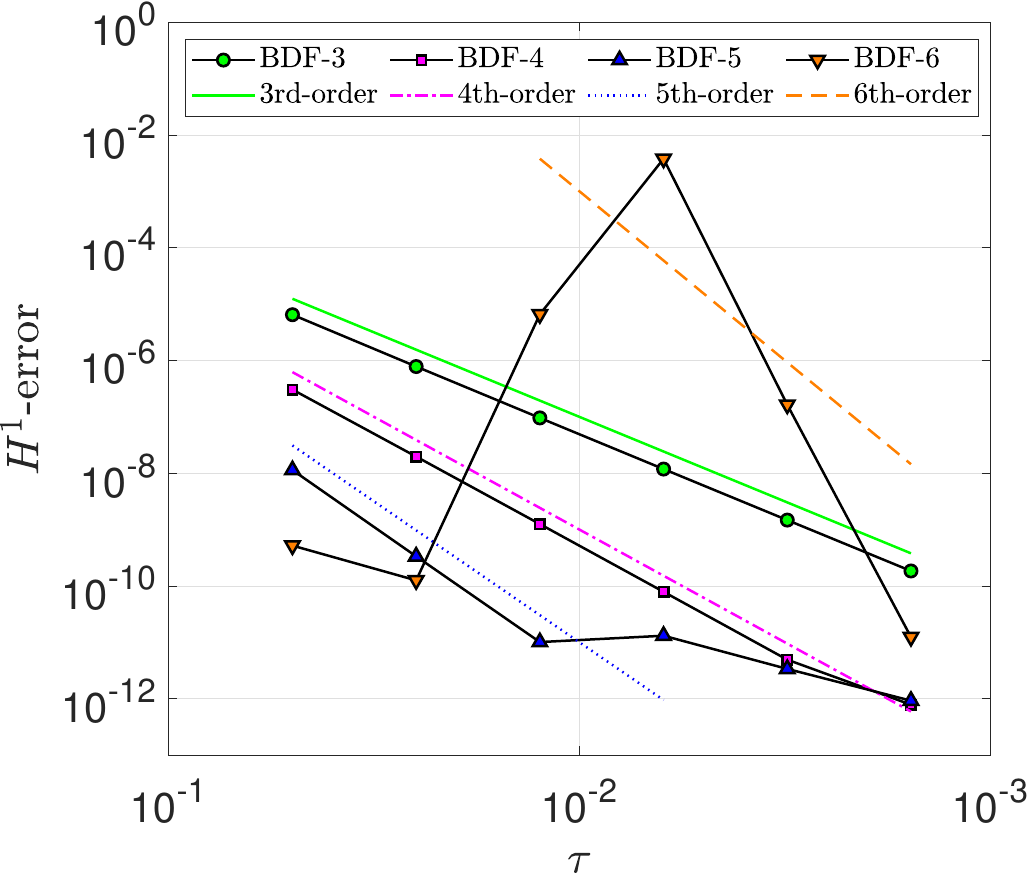}}\qquad \quad
  \subfigure[Errors of $\bs B$ vs $\tau$ ]{ \includegraphics[scale=.38]{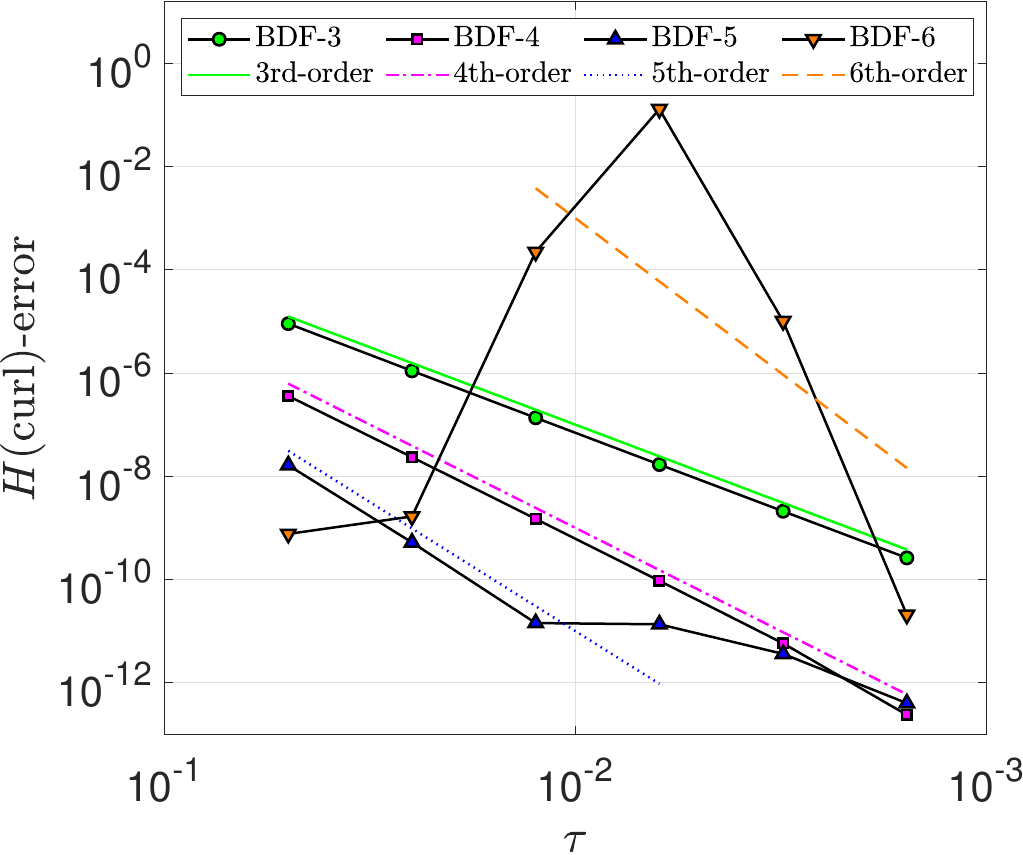}} 
\caption{\small Temporal convergence of DF-BDF-$k$ ($k=3,\cdots, 6$) schemes for solving the MHD system in 2D: (a) $\bs H^1$-errors of $\bs u$ and (b) $\bs H({\rm curl})$-errors of $\bs B$ versus time step sizes $\tau$ (fixed polynomial order $N=50$ and $T=1$.} 
   \label{fig_BDF36err_N2d}
\end{center}
\end{figure}

\begin{figure}[htbp]
\begin{center}
 \subfigure[$\tau=0.1$]{ \includegraphics[scale=.38]{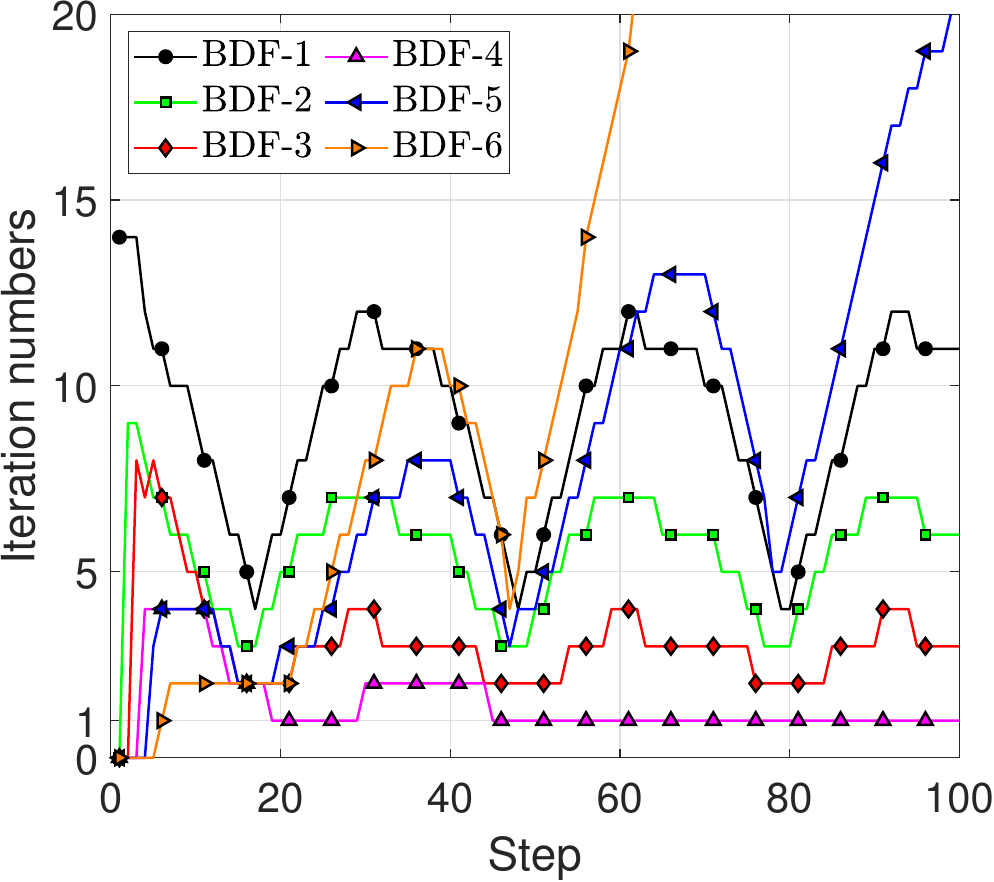}} \qquad
  \subfigure[$\tau=0.01$]{ \includegraphics[scale=.38]{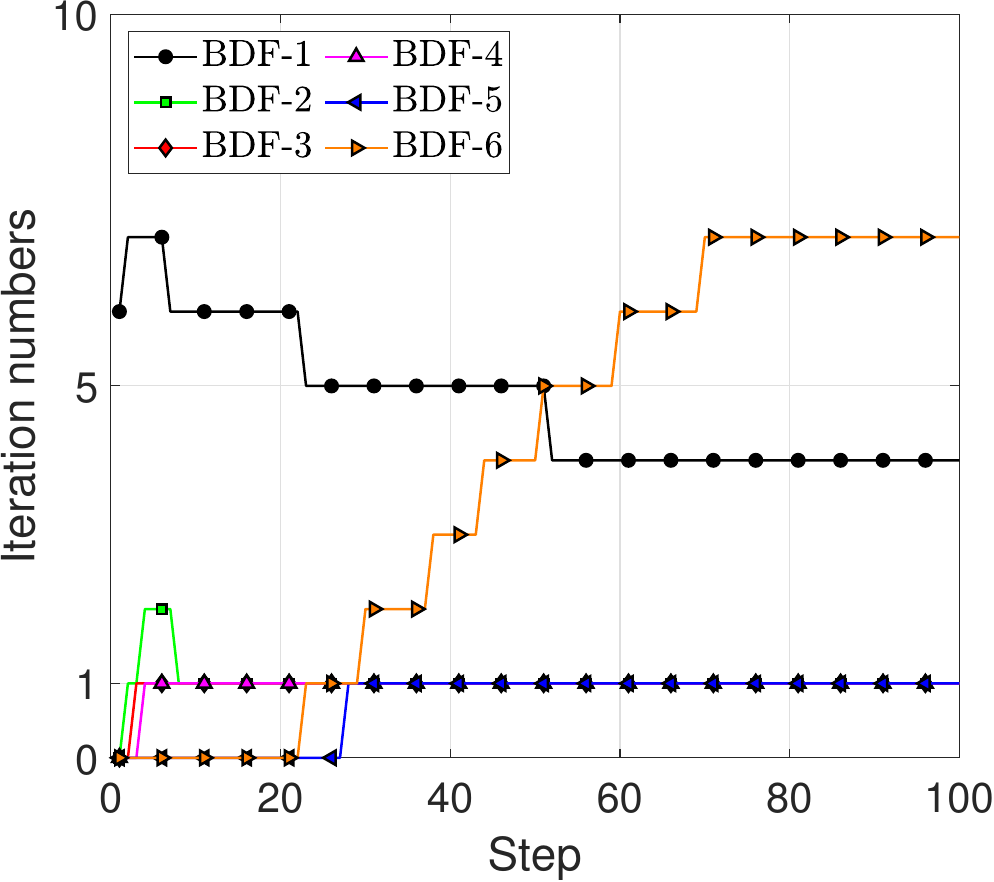}} 
\caption{\small Number of iterations of the sub-iteration method for DF-BDF-$k$ schemes as a function of time step in 2D: (a) $\tau=0.1$ and (b) $\tau=0.01$. The simulations are obtained with $N=50$ and 100 time steps.} 
   \label{fig_iterStep_N2d}
\end{center}
\end{figure}

\begin{example}[\bf Long time simulation for DF-BDF-3-6 schemes with large step size in 2D.]\label{ex: ex4} 

To further investigate the stability of high-order DF-BDF-$k$ schemes, we employ a large time step size $\tau=0.1$ with a fixed $N=50$ and conduct long time simulations to $T=100.$ It is observed in Figure \ref{fig_stable_N2d} that DF-BDF-5-6 schemes blows up before $t=13$ at such a large time step. In the meanwhile, DF-BDF-$3$-4 schemes produce results with numerical errors less than $10^{-4}$ and $10^{-5}$, respectively. In view of previous numerical experiments for the proposed DF-BDF-$k$ schemes, we advocate to adopt DF-BDF-$4$ scheme for numerical simulation, as it has distinctive feature of accuracy, efficiency and stability.  

\begin{figure}[!htbp]
\begin{center}
  \subfigure[$H^1$-error long time stability]{ \includegraphics[scale=.53]{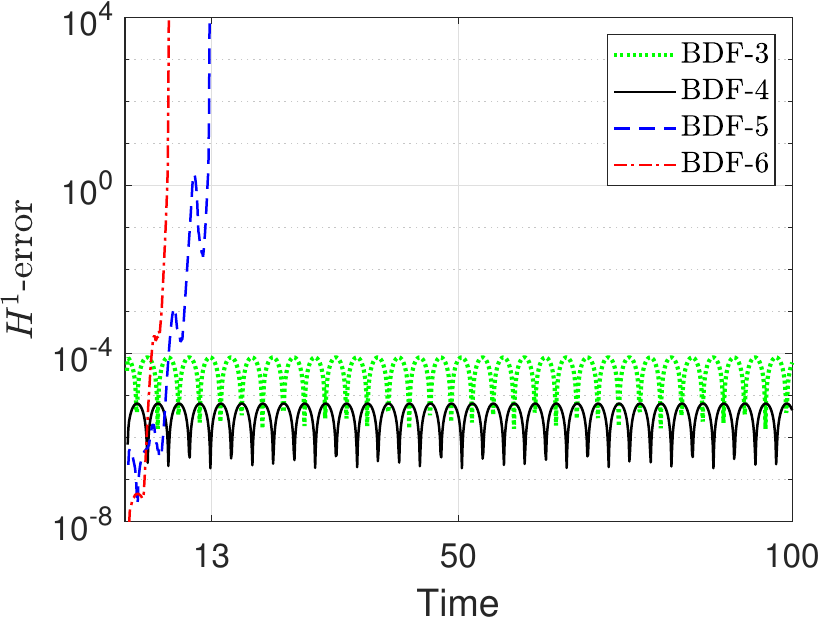}} \quad
  \subfigure[$H(\mathrm{curl})$-error long time stability]{ \includegraphics[scale=.523]{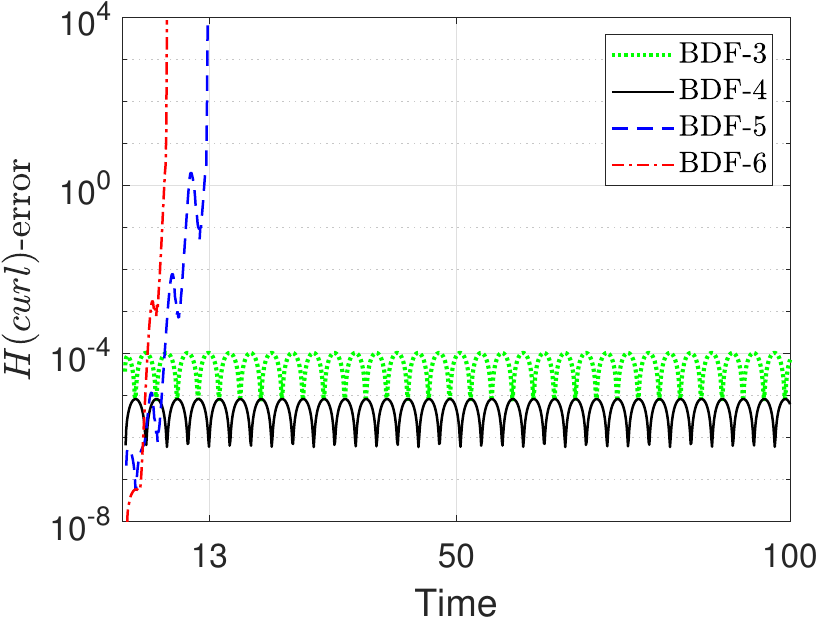}} 
\caption{\small (a)-(b) Long time stability tests of DF-BDF-$k$ schemes in 2D. Time history of errors with large time step size $\tau=0.1$ for (a) $\bs u$ and (b) $\bs B$, respectively. The simulations are obtained with polynomial order $N=50$. } 
   \label{fig_stable_N2d}
\end{center}
\end{figure}

\end{example}

\begin{example}[\bf Driven cavity flow problem in 2D.]\label{ex: ex5}

Next, we focus on the driven cavity flow problem in 2D to demonstrate that the proposed schemes can produce physically accurate simulation results. We fix the computational domain to be $\Omega = [0,1]^2$.  The boundary condition of this problem is given by
\begin{equation}\label{eq: Drivenbd}
 \bs u=
\begin{cases}
(1,0)^\intercal, & {\rm at}\; \Gamma_1, \\
(0,0)^{\intercal}, & {\rm at}\; \partial \Omega \backslash \Gamma_1,
\end{cases}
\quad
 \bs n \cdot \bs B=\bs n \cdot \bs B_0,\quad \bs n \times (\nabla \times \bs B)=0\;\;\;{\rm at}\;\;\partial \Omega,
\end{equation}
where $\Gamma_1:=\big\{(x_1,x_2)^{\intercal} \big| 0\leq x_1\leq 1, x_2=1\big \}$ and $\bs B_0 = (0,1)^\intercal$. The source term is set as $\bs f=\bs 0$ in system \eqref{eq: mhd}. We also denote ${\rm Re}$, ${\rm Rem}$ and ${\rm Ha}$ as the Renolds number, magnetic Renolds number and Hartmann number, respectively given by
\begin{equation}\label{eq: hartmann}
{\rm Re}=\frac{1}{\nu}, \quad {\rm Rem}=\frac{1}{\eta},\quad {\rm Ha}=\sqrt{\frac{{\rm Re}\times {\rm Rem}}{\mu \rho}}.
\end{equation}
We fix ${\rm Rem}=100$, ${\rm Ha}=\sqrt{10}$ and vary ${\rm Re}=100,500,1500,10000.$ We employ the proposed DF-BDF-4 scheme with polynomial order $N=100$ and time step size $\tau=10^{-2}$  to numerical integrate the MHD system until steady state solutions are obtained.  In Figure \ref{figs: Velocity2D} (a)-(d), we plot the streamline of velocity field with the contour of the vorticity field ($\nabla \times \bs u$) as background.  We observe that the center of the streamline is close to the upper right corner with relatively small Renolds numbers. As ${\rm Re}$ increases, the center of the streamline gradually moves to the center of the region and there forms new eddies at the lower corner. Moreover, we can find that the contour of vorticity field  become sharper as ${\rm Re}$ increases. Meanwhile, we depict in Figure \ref{figs: Velocity2D} (e)-(h) the associated streamlines of magnetic field. These figures indicate that the magnetic fields remain quite close to each other as Reynolds number changes. These expected phenomena demonstrate that the proposed DF-BDF-4 scheme can produce physically accurate simulation results with relatively large time step size and coarse grids. 

%

\end{example}

\begin{figure}[htbp]
\begin{center}
    \subfigure[Re=100]{ \includegraphics[scale=.19]{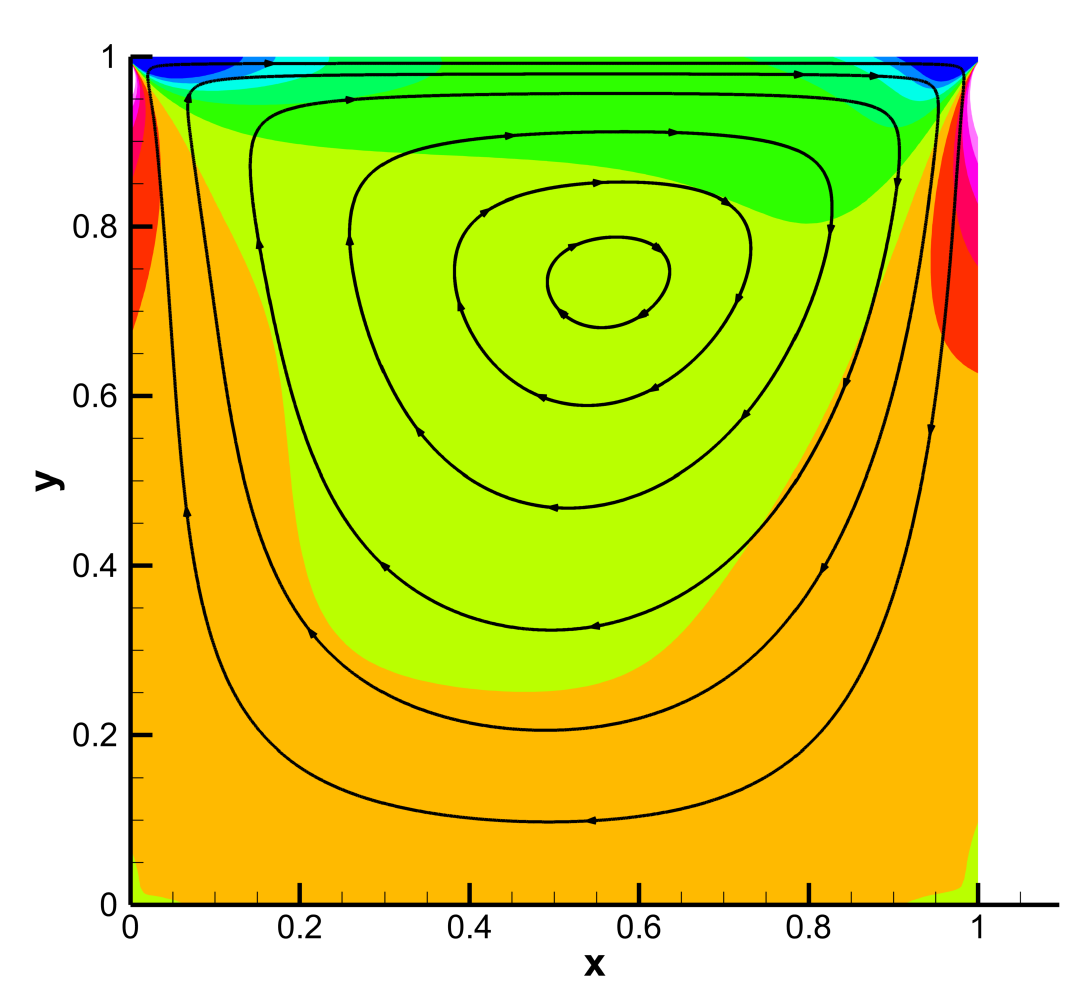}}
      \subfigure[Re=500]{ \includegraphics[scale=.19]{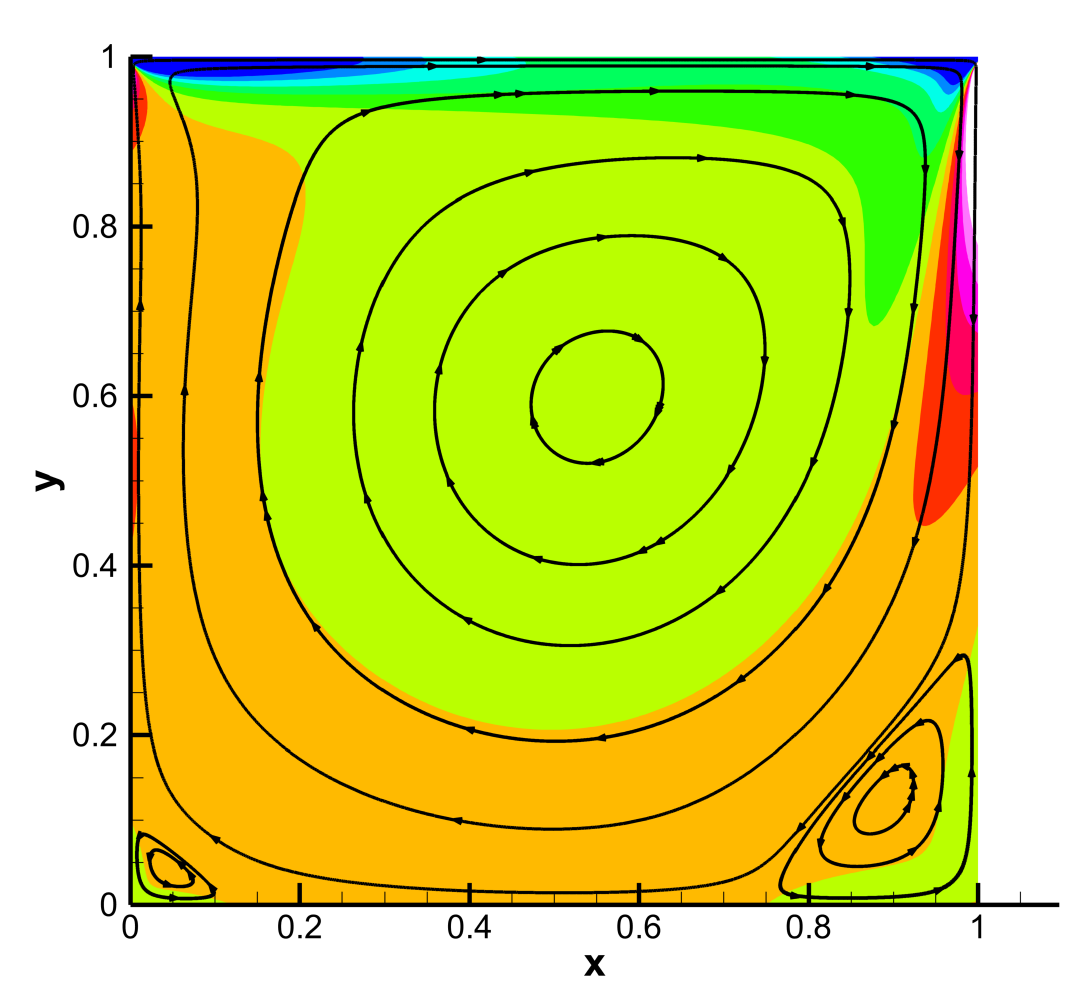}}
       \subfigure[Re=1500]{ \includegraphics[scale=.19]{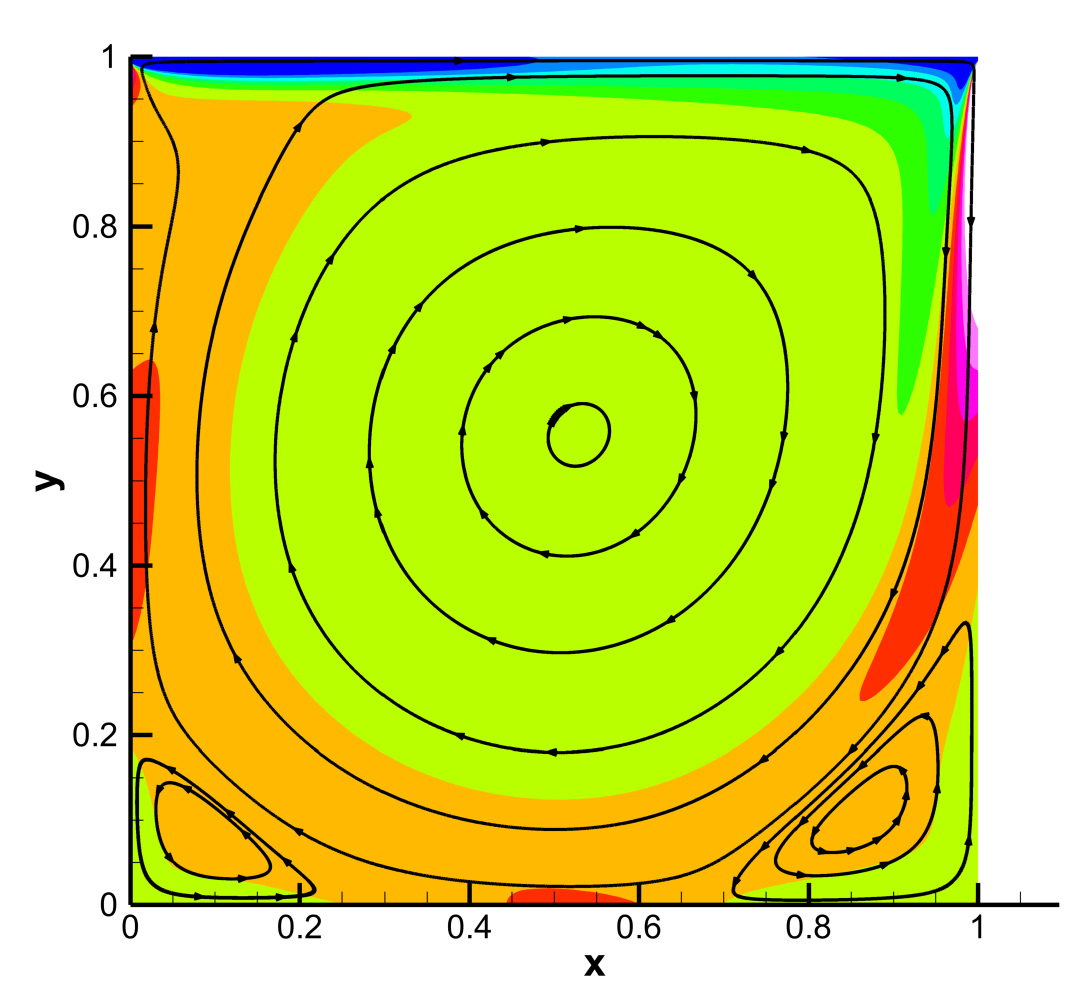}}
         \subfigure[Re=10000]{ \includegraphics[scale=.19]{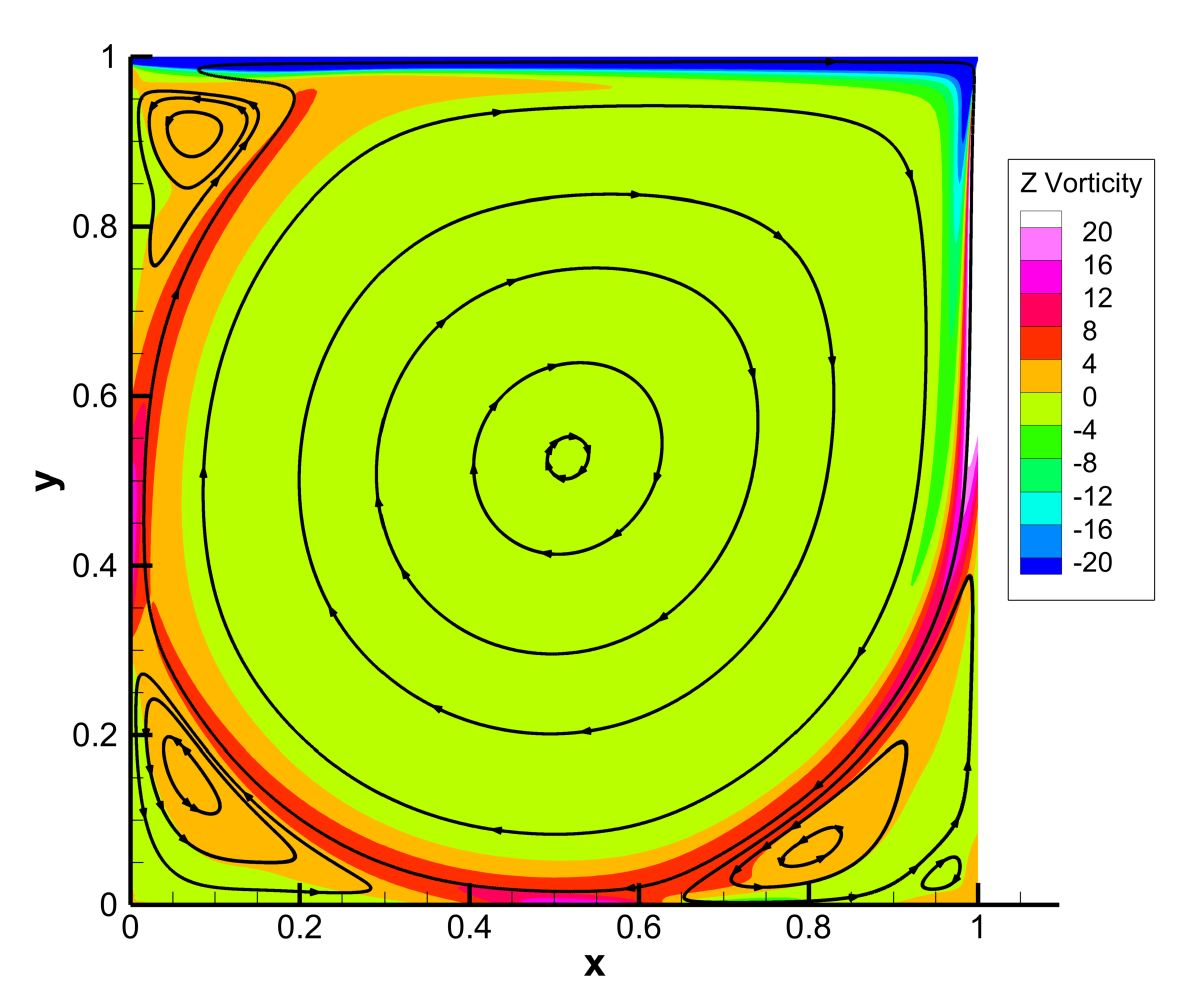}}\\
     \subfigure[Re=100]{ \includegraphics[scale=.2]{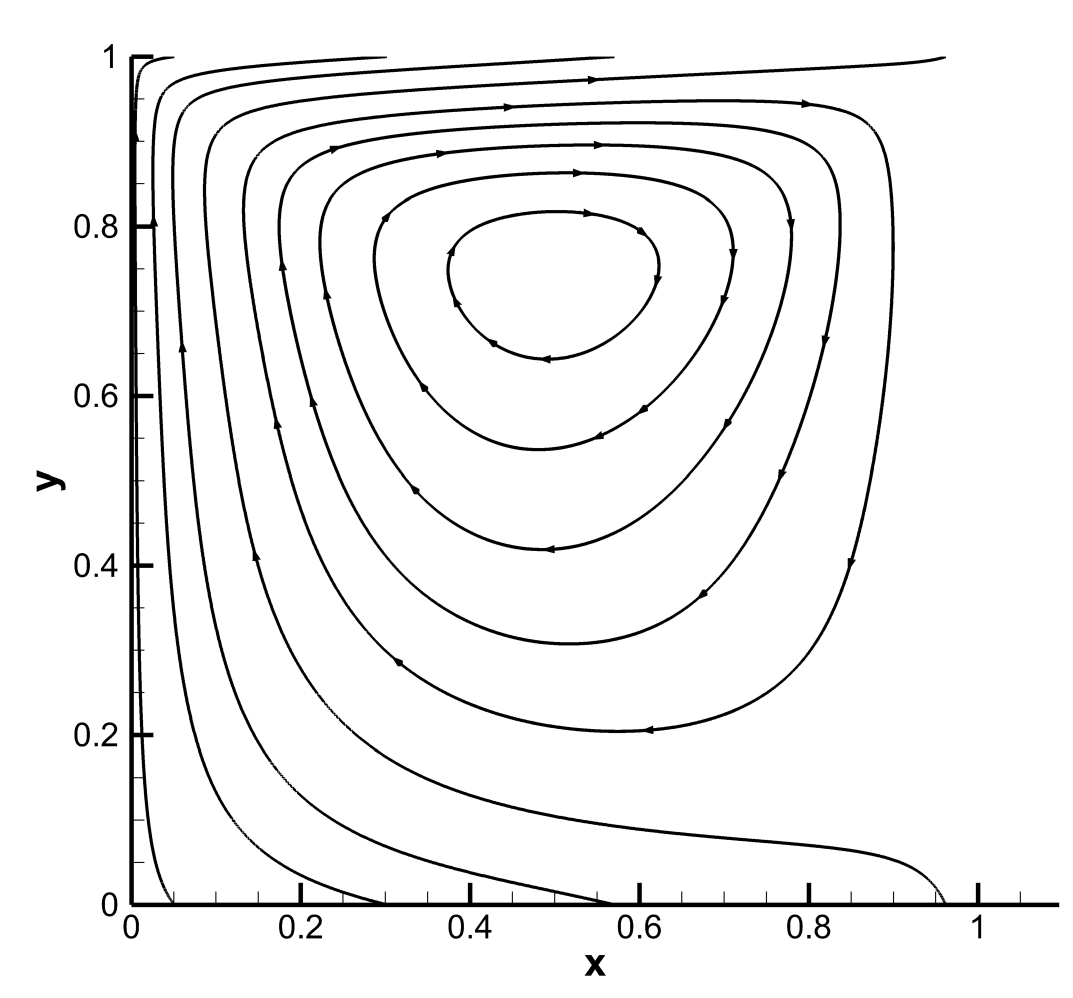}}
    \subfigure[Re=500]{ \includegraphics[scale=.2]{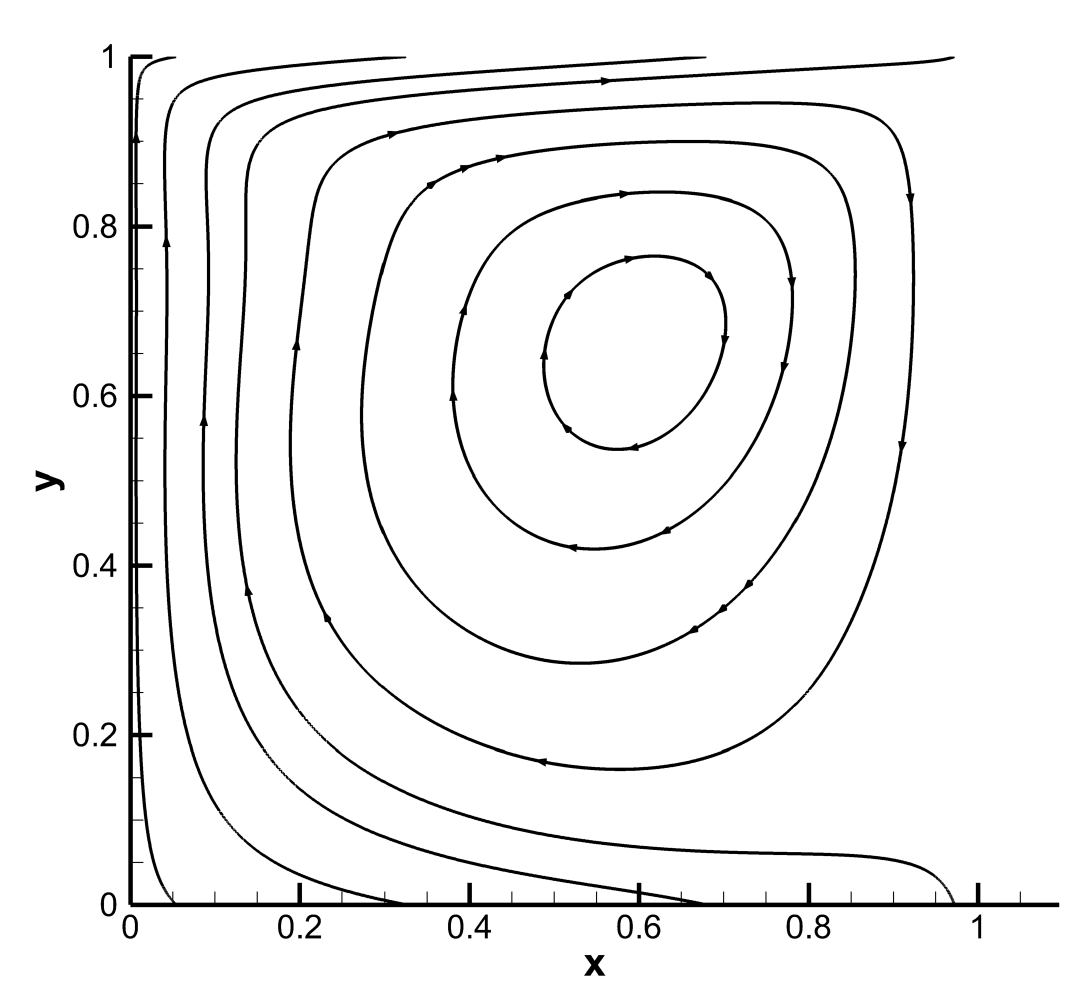}}
  \subfigure[Re=1500]{ \includegraphics[scale=.2]{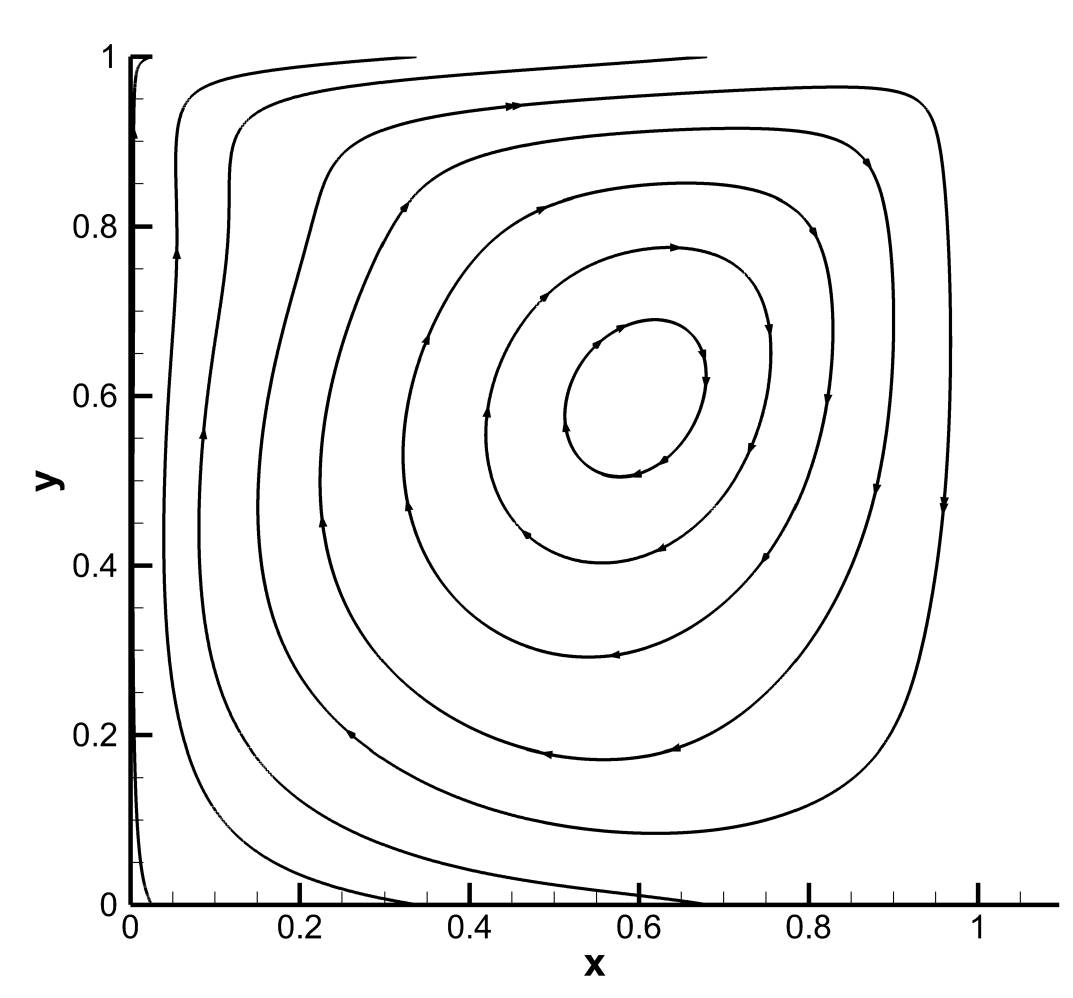}} 
    \subfigure[Re=10000]{ \includegraphics[scale=.2]{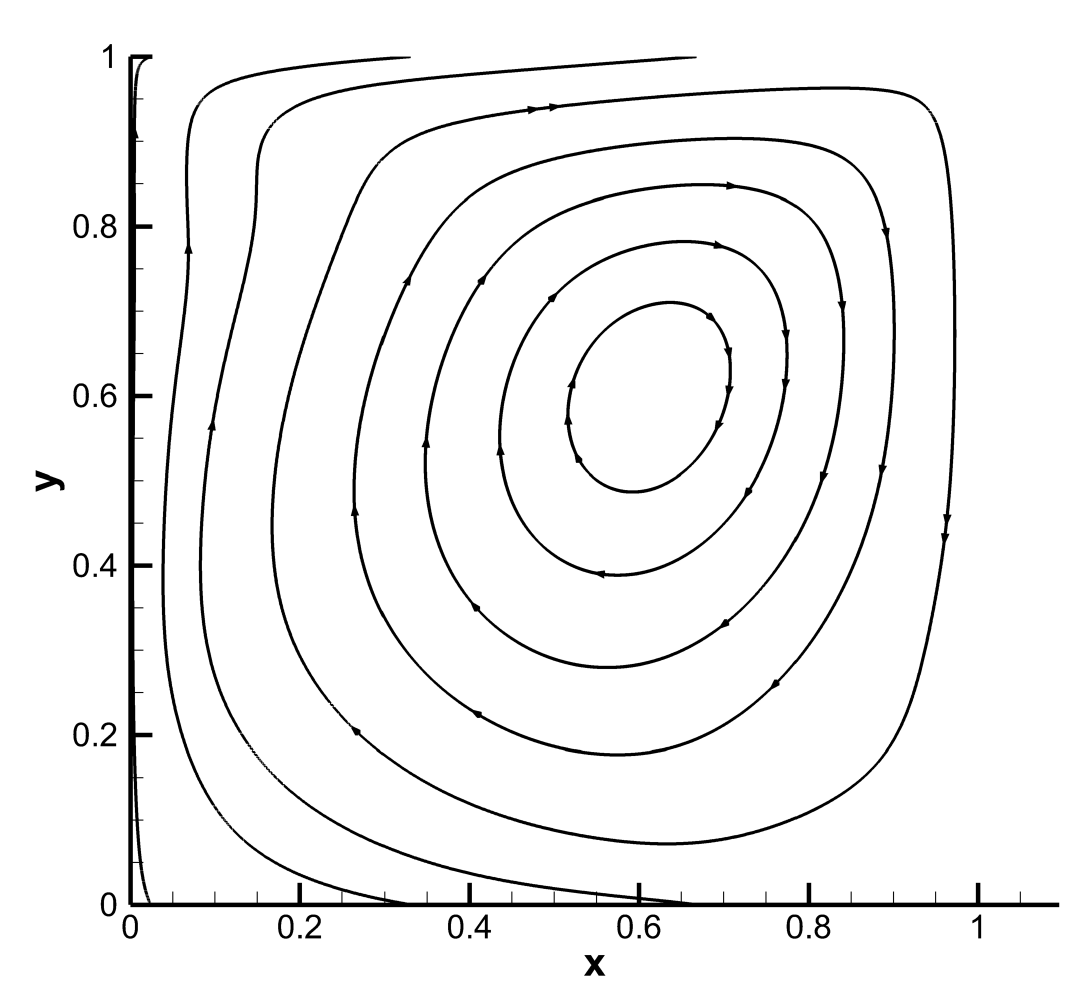}} 
    \caption{ Steady state solutions of driven cavity flow problem in 2D. The streamline of velocity field with the contour of vorticity as background  for (a) ${\rm Re}=100$ (b) ${\rm Re}=500$ (c) ${\rm Re}=1500$ and (d) ${\rm Re}=10000$ and the corresponding streamline of magnet fields in (e)-(h), respectively. The simulation results are obtained with DF-BDF-4 scheme with $N=100$, $\tau=10^{-2}$, ${\rm Rem}=100$ and ${\rm Ha}=\sqrt{10}$. The parameters are obtained by $\nu = {1}/{\rm Re}$,  $\eta=1/{\rm Rem}=0.01$, $\mu \rho =({\rm Re}\times {\rm Rem})/{\rm Ha}^2= 10{\rm Re}$. 
    } 
\label{figs: Velocity2D}
\end{center}
\end{figure}

\subsection{Three dimensional case}

\begin{example}[\bf Convergence test for DF-BDF-$k$ schemes in 3D.]\label{ex: ex6}

Next, we turn to numerical experiments in three dimensions. We conduct convergence tests of the proposed DF-BDF-$k$ schemes using the following manufactured solution in $\Omega=\Lambda^3$:
\begin{equation}\label{eq: analytic3d}
\begin{aligned}
&\beta_i(x_i)=(1-x_i^2)\sin\big({\pi}(x_i+1)/2 \big), \quad \gamma_i(x_i)=\sin({\pi}(x_i+1)/{2}),  \;\; i=1,\cdots,3,\\
& \bs u(\bs x,t)=\Big( 2\beta_1(x_1) \beta_{2}'(x_2) \beta_{3}'(x_3), -\beta_1'(x_1)\beta_2(x_2) \beta'_{3}(x_3),  -\beta_{1}'(x_1)\beta_{2}'(x_2)\beta_3(x_3) \Big)^{\intercal} \cos(t),\\
& \bs B(\bs x,t)=\Big(\frac{2}{\pi} \Big)^2  \Big( 2\gamma_1(x_1) \gamma_{2}'(x_2) \gamma_{3}'(x_3), -\gamma_1'(x_1)\gamma_2(x_2) \gamma'_{3}(x_3),  -\gamma_{1}'(x_1)\gamma_{2}'(x_2)\gamma_3(x_3) \Big)^{\intercal} \cos(2t),\\
&p(\bs x,t)=10\big[ (x_1-1/2)^3\, x_2^2+(1-x_1)^3\,(x_2-1/2)^3+(1-x_1)^2x_2(x_3-1/2)^3 \big],
\end{aligned}
\end{equation}
which satisfy the boundary condition \eqref{eq: mhdbc}. Again, the initial conditions are set as $\bs u_{\rm in}(\bs x)=\bs u(\bs x,t=0)$ and $\bs B_{\rm in}(\bs x)=\bs B(\bs x,t=0)$ to be compatible with the divergence-free constraints and boundary conditions.
Accordingly, the forcing terms $\bs f$, $\bs g$ are chosen such that the analytic expression \eqref{eq: analytic3d} satisfies equation \eqref{eq: mhdnew}. The parameters are chosen as  $\nu=\rho=\mu=\eta=1$. 

The convergence rates are quite similar with the 2D case. We find exponential convergence of numerical errors in space before saturation caused by the temporal discretization error in Figure \ref{fig_BDFerr_N3d} (a). We also observe clearly $k$-th order convergence rate in time for DF-BDF-$k$ schemes with $k=1,\cdots,4$ in Figure \ref{fig_BDFerr_N3d} (b) and Figure \ref{fig_BDF36err_N3d}. As for $k=5,6$, the convergence rates in time are not as expected due to numerical instability, see Figure \ref{fig_BDF36err_N3d}.

\begin{figure}[!htbp]
\begin{center}
  \subfigure[ Errors vs $N$]{ \includegraphics[scale=.38]{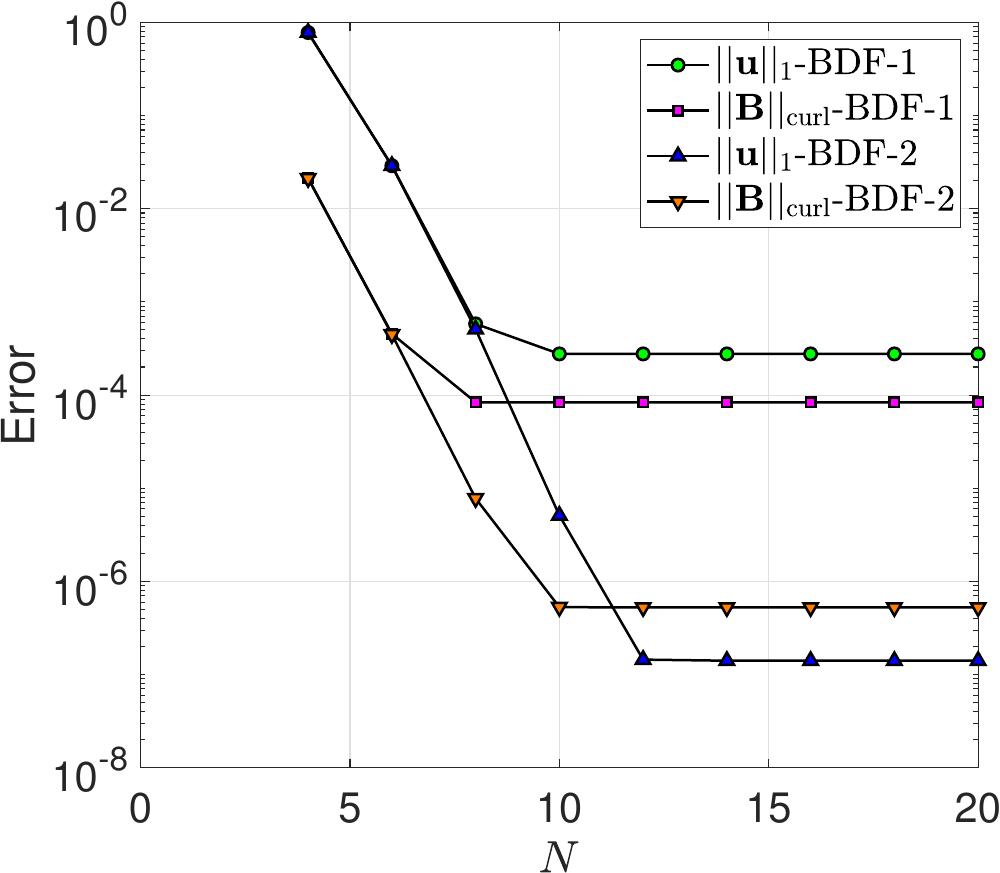}}\qquad
  \subfigure[Errors vs $\tau$ ]{ \includegraphics[scale=.38]{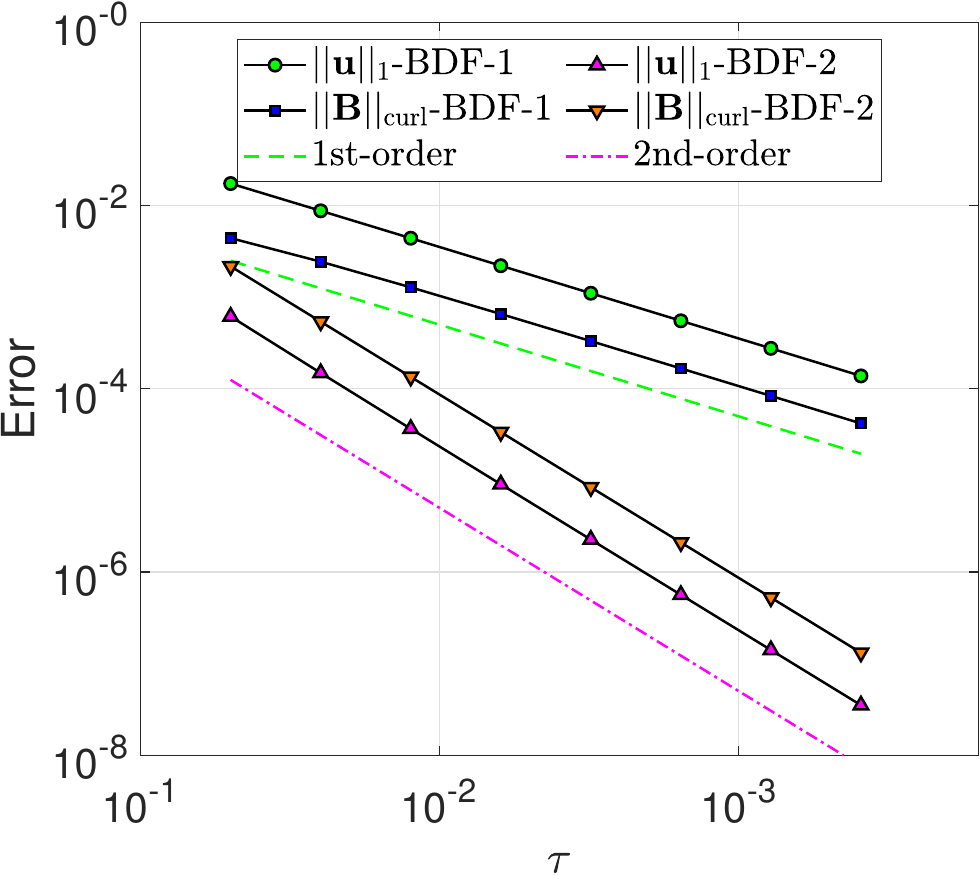}}
 \caption{\small Convergence test of DF-BDF-1-2 schemes in 3D: (a) errors versus polynomial order $N$ (fixed $\tau=3.125\times 10^{-3}$ and $T=0.1$); (b) errors versus time step sizes $\tau$ (fixed polynomial order $N=16$ and $T=1$).
 } 
   \label{fig_BDFerr_N3d}
\end{center}
\end{figure}

\begin{figure}[!htbp]
\begin{center}
    \subfigure[Errors of $\bs u$ vs $\tau$]{ \includegraphics[scale=.38]{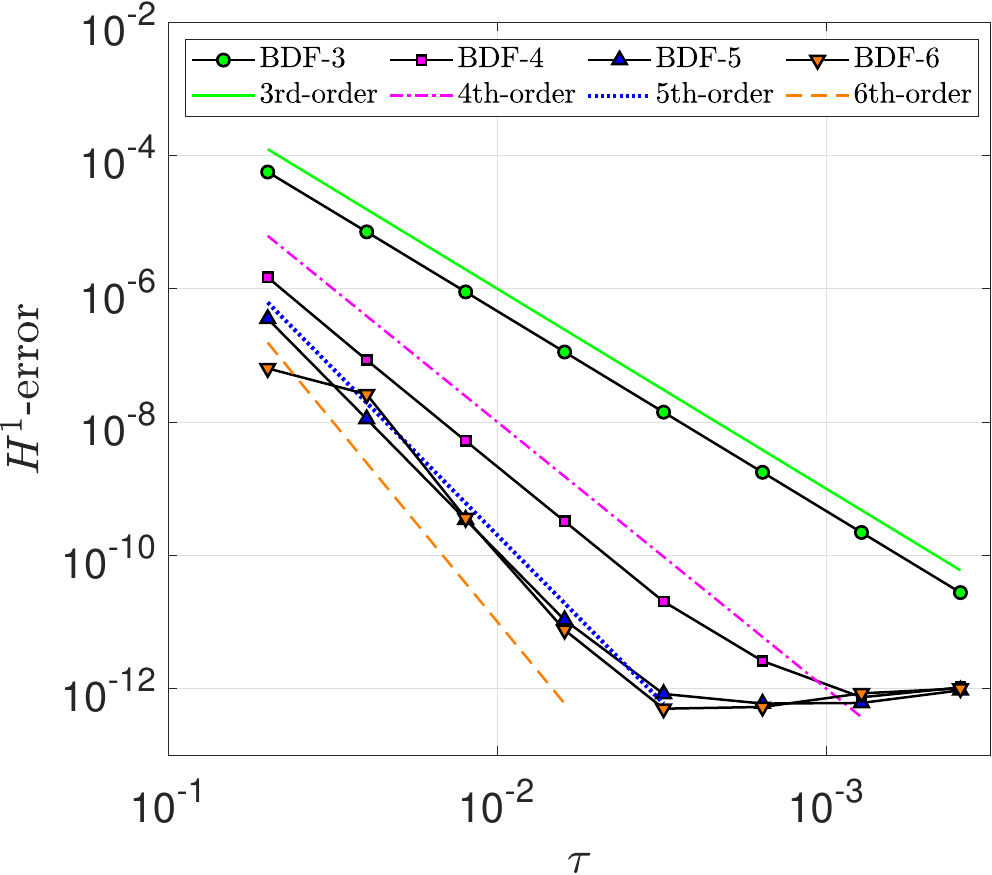}}\qquad
  \subfigure[Errors of $\bs B$ vs $\tau$ ]{ \includegraphics[scale=.38]{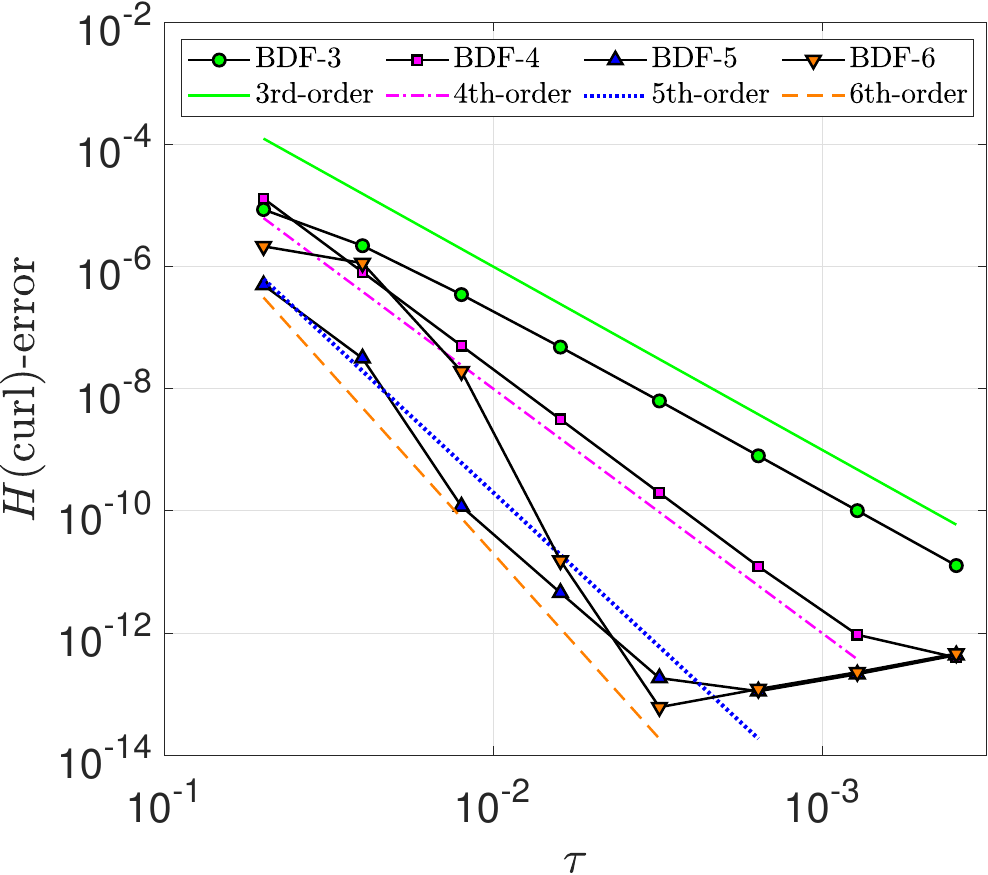}} 
\caption{\small Temporal convergence of DF-BDF-$k$ ($k=3,\cdots, 6$) schemes in 3D: (a) $\bs H^1$-errors of $\bs u$ and (b) $\bs H({\rm curl})$-errors of $\bs B$ versus time step sizes $\tau$ (fixed polynomial order $N=16$ and $T=1$.} 
   \label{fig_BDF36err_N3d}
\end{center}
\end{figure}

 \begin{figure}[!htbp]
\begin{center}
  \subfigure[Iteration numbers]{ \includegraphics[scale=.38]{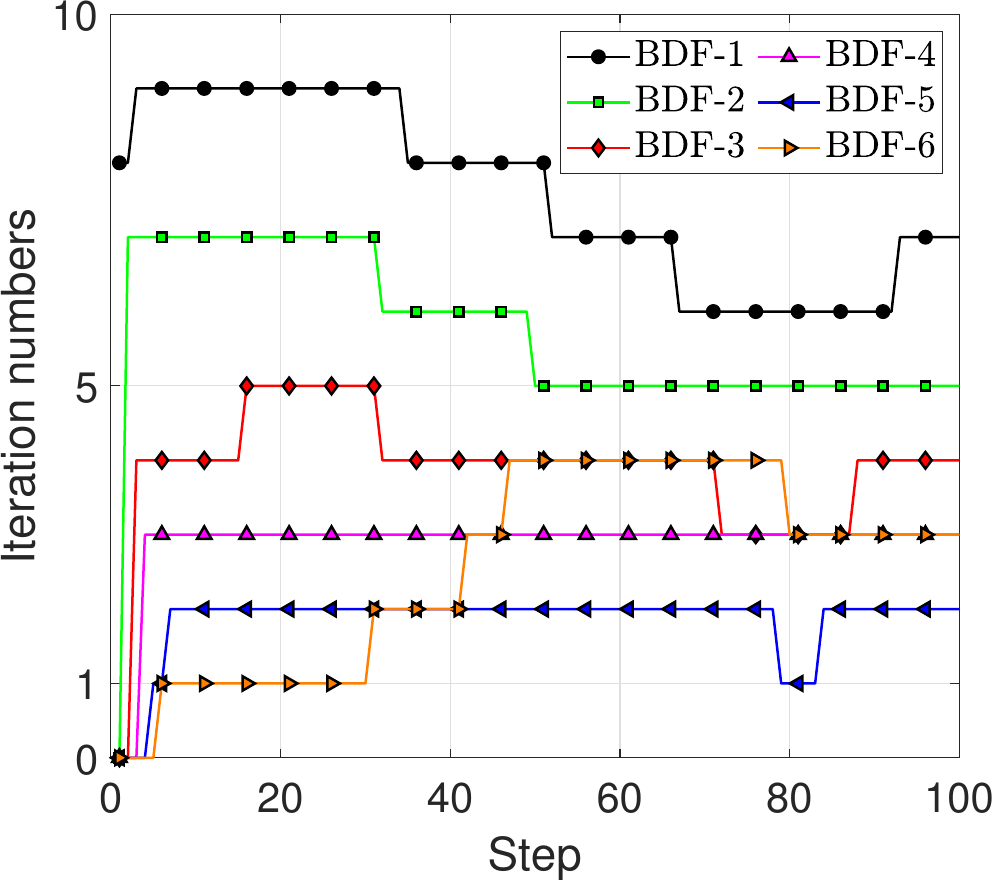}} \qquad
  \subfigure[$H^1$-error long time stability]{ \includegraphics[scale=.38]{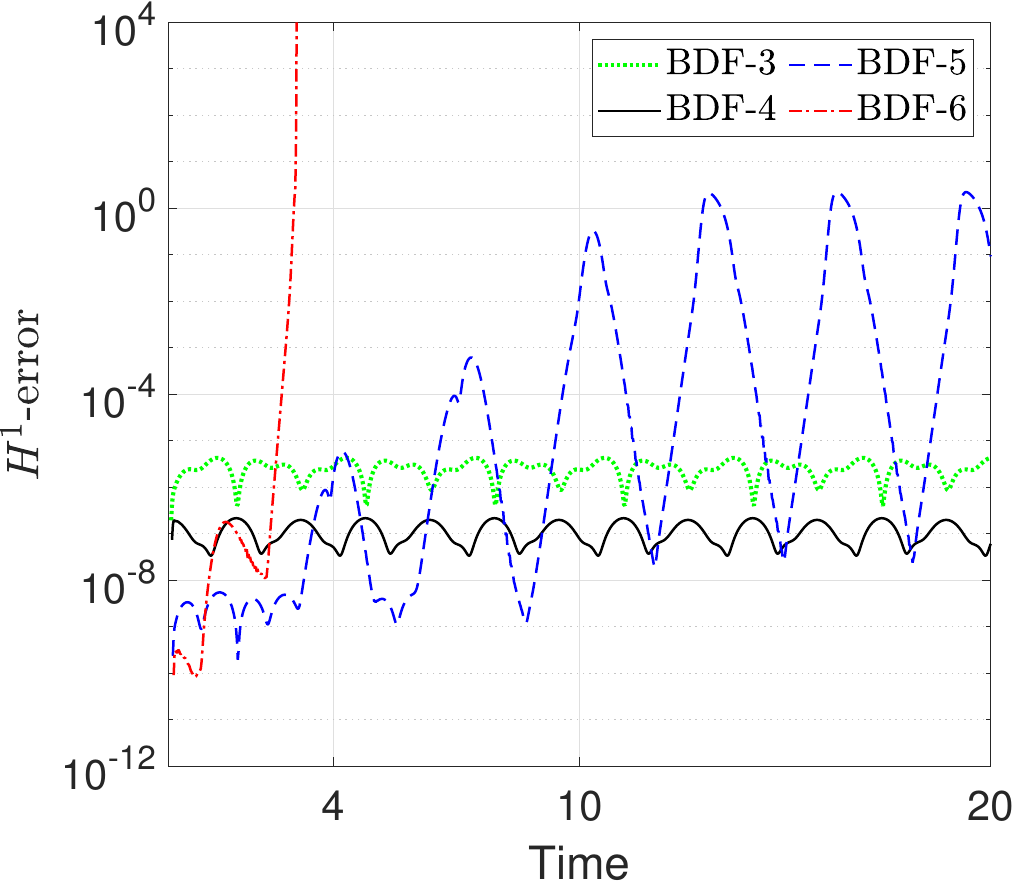}} 
\caption{\small Efficiency and stability of DF-BDF-$k$ schemes in 3D: (a) number of iterations of the sub-iteration method ; (b) time history of errors of $\bs u$. Results are obtained with $\tau=0.02$ and $N=16$.} 
   \label{fig_iterStep_N3d}
\end{center}
\end{figure}

\begin{figure}[!htbp]
\begin{center}
    \subfigure[Re=100]{ \includegraphics[scale=.28]{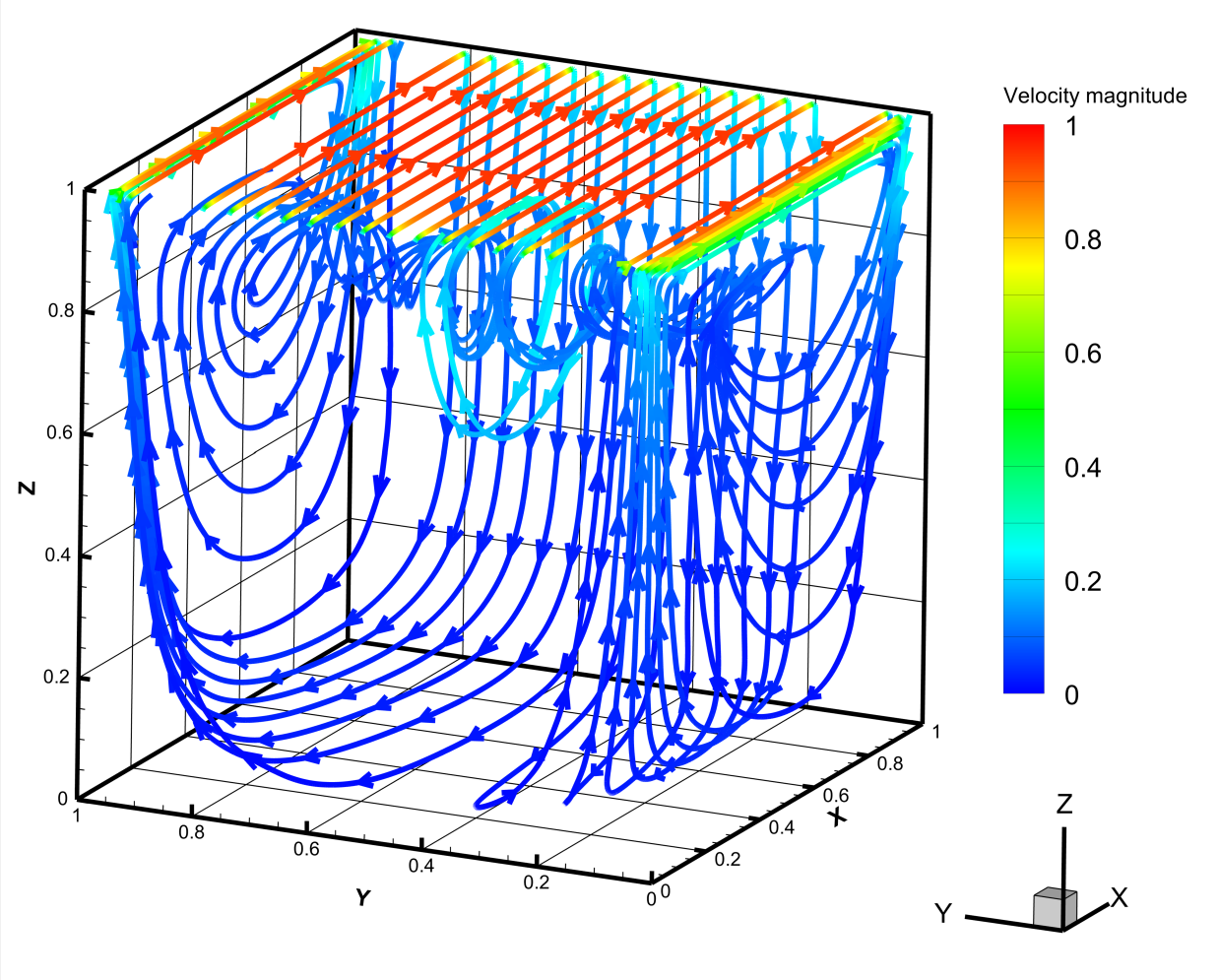}},\quad    
     \subfigure[Re=100]{ \includegraphics[scale=.28]{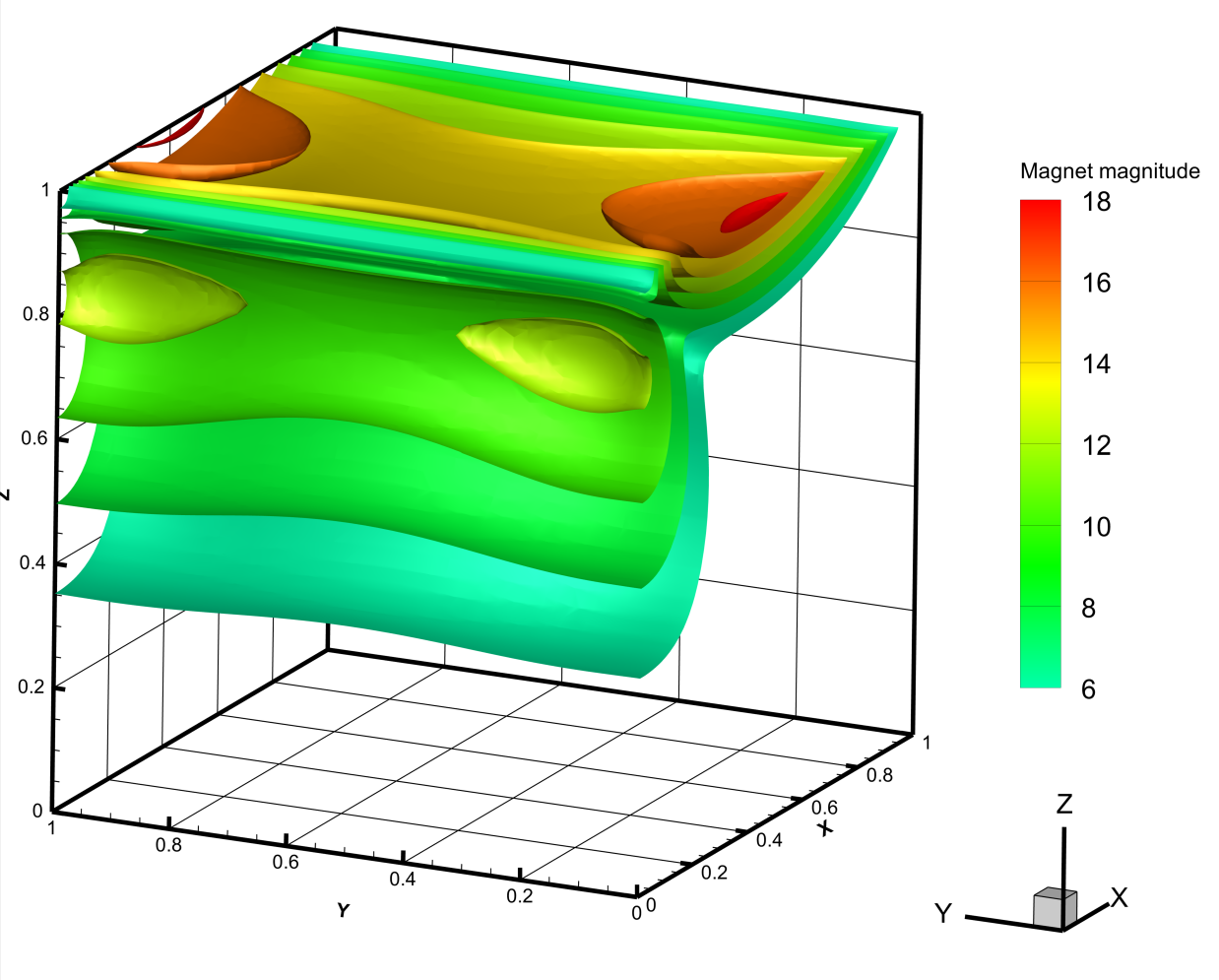}}\\
      \subfigure[Re=1000]{ \includegraphics[scale=.28]{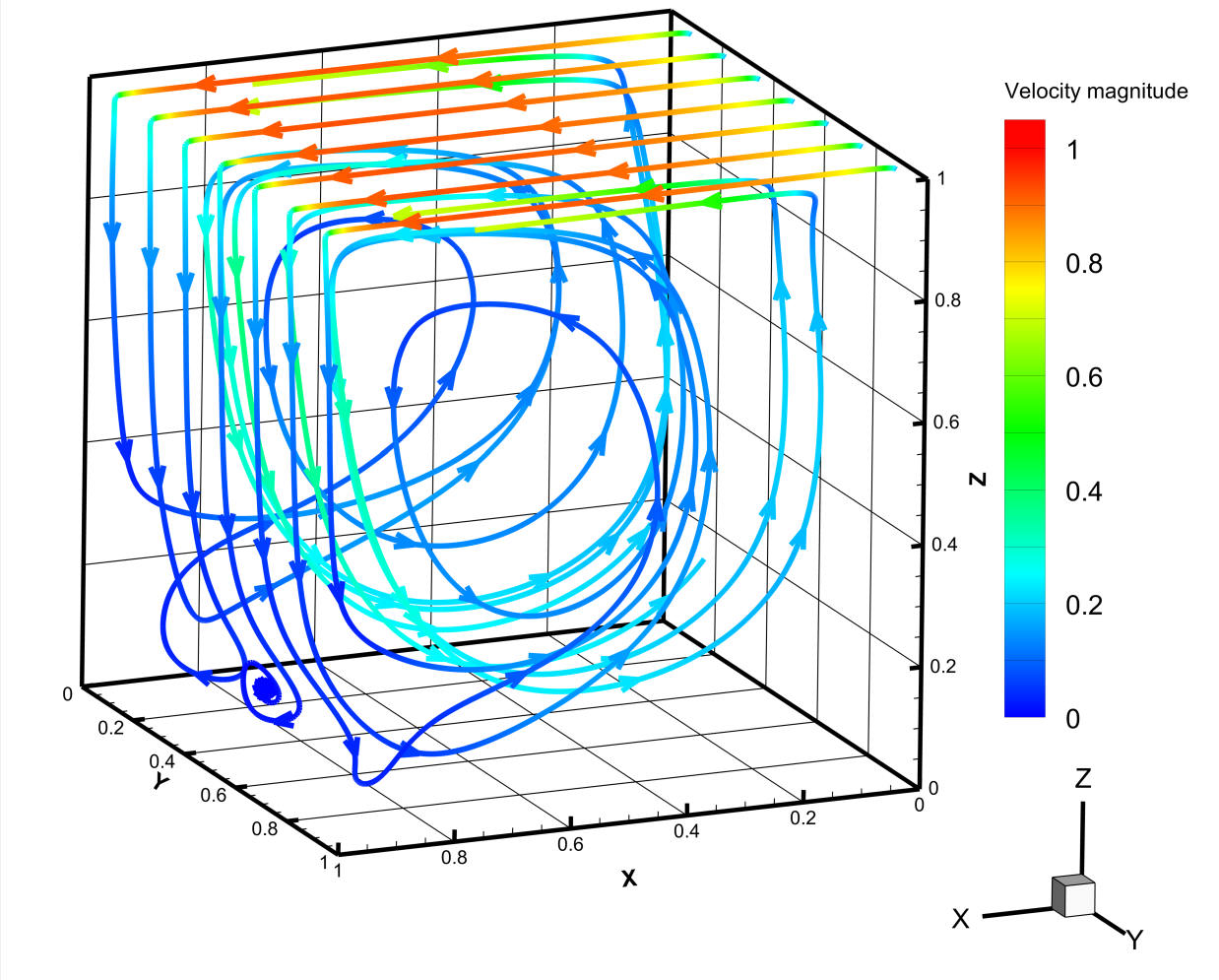}}\quad
  \subfigure[Re=1000]{ \includegraphics[scale=.28]{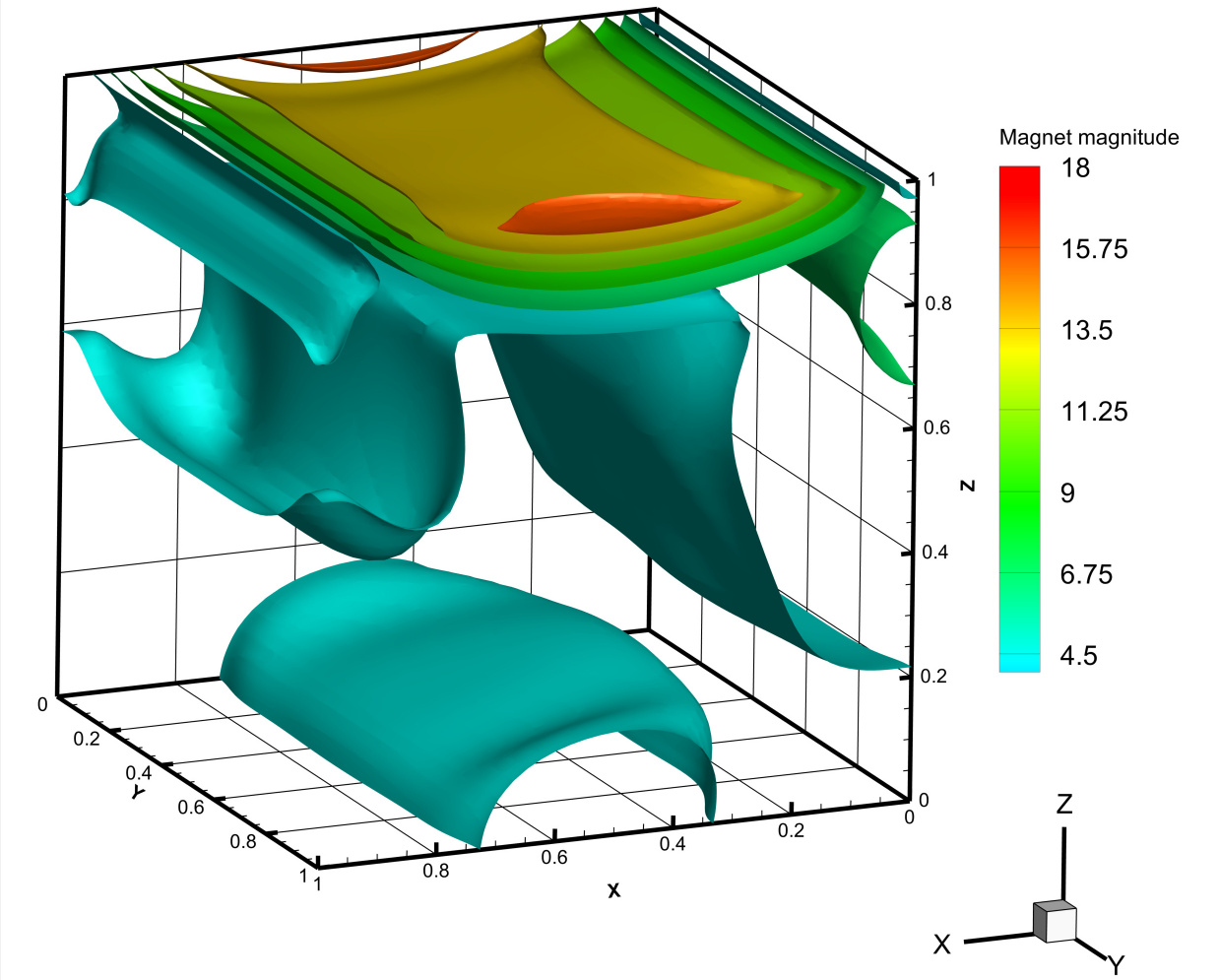}}
    \caption{ Steady state solutions of driven cavity flow problem in 3D. The streamline of velocity field  and the iso-surface of magnetic intensity for (a)-(b) ${\rm Re}=100$ and (c)-(d) ${\rm Re}=1000$, respetively.  The simulation results are obtained with DF-BDF-4 scheme with $N=50$, $\tau=10^{-2}$, ${\rm Rem}=100$ and ${\rm Ha}=\sqrt{10}$.} 
   \label{figs: Cavity3D}
\end{center}
\end{figure}

%
%

\end{example} 




\begin{example}[\bf Efficiency and stability of DF-BDF-$k$ schemes in 3D.]\label{ex: ex7} 

In order to find the efficiency and stability of the proposed DF-BDF-$k$ schemes in three dimensions, we adopt a large time step size $\tau=0.02$ with a fixed $N=16$ and depict in Figure \ref{fig_iterStep_N3d} (a)-(b) the iteration numbers of the sub-iteration method and the numerical errors of the velocity field for long time simulations. Similar with the 2D case, the iteration numbers decrease as $k$ increases from 1 to 4 and suddenly increase for $k=5$ and 6. It takes only 3 iterations for DF-BDF-$4$ to converge. Moreover, the accuracy remains $10^{-7}$ for this scheme, while DF-BDF-$5$-$6$ schemes blow up, as expected.


\end{example}

%

\begin{example}[\bf Driven cavity flow problem in 3D.]\label{ex: ex8}

We conclude the numerical experiments with the driven cavity flow problem in 3D as benchmark. The boundary condition is set with 
\begin{equation}\label{eq: Drivenbd}
\bs u(\bs x,t)=
\begin{cases}
(1,0,0)^{\intercal}, & {\rm at}\, \Gamma_z,\\
(0,0,0)^{\intercal}, & {\rm at}\, \partial \Omega \backslash \Gamma_z,
\end{cases}
\quad \bs n \cdot \bs B=\bs n \cdot \bs B_0,\quad \bs n \times (\nabla \times \bs B)=0\;{\rm at}\;\partial \Omega,
\end{equation}
where $\Gamma_z:=\{(x_1,x_2,x_3)^{\intercal}| 0\leq x_1,x_2\leq 1, x_3=1\}$ and $\bs B_0 = (0,0,1)^\intercal$. The body force term $\bs f(\bs x,t)=0$ in equation \eqref{eq: mhd}. Similar with the two dimensional case, let us fix ${\rm Rem}=100$ and ${\rm Ha}=\sqrt{10}$ and adjust the Renolds number ${\rm Re}=100,1000.$  DF-BDF-4 scheme is employed with $N=50$ and time step size $\tau=10^{-2}$ for the simulation until steady state solutions are obtained.  In Figure \ref{figs: Cavity3D} (a)-(b), we plot the streamline of velocity field and the iso-surface of magnetic intensity measured by $\sqrt{B_1^2+B_2^2+B_3^2}$ for ${\rm Re}=100$. We observe that the center of the streamline is close to the corner $x_1=x_3=1$ and the magnetic intensity is relative large at the faces $x_1=0$ and $x_3=1$. As ${\rm Re}$ increases to 1000, the center of the streamline approaches the center of the region and eddies occur in the lower corner of the domain. These phenomena are consistent with the two dimensional simulations. 


\end{example}

\section{Concluding Remarks}\label{sect: conclude}
In this paper, we presented a systematic approach of devising $\bs H_0({\rm div})$- and $\bs H^1_0$-conforming exact divergence-free spectral basis functions for computational domains diffeomorphic to square or cube in two and three dimensions. These are achieved by an interplay of the orthogonality and derivative property of generalised Jacobi polynomials, several de Rham complexes and the property of contravariant Piola transformation. We employed the proposed divergence-free bases to the approximation of the incompressible and resistive MHD equations based on Galerkin formulation, which decouples the velocity and pressure with the help of the divergence-free test function. Moreover, we proposed DF-BDF-$1$ and DF-BDF-$2$ fully-discretized schemes, which have the distinctive feature of provable unconditional energy stability and exact preservation of divergence-free constraints for both velocity and magnetic fields point-wisely at the same time. These schemes are linear-coupled and can be solved efficiently by a sub-iteration method. Under each iteration step, one only needs to solve two decoupled linear equation for velocity and magnetic fields, respectively. Numerical experiments demonstrate that  they usually take only a few iterations to converge for DF-BDF-$k$ schemes ($k=2,\cdots ,4$), even for large time step sizes. Furthermore, we proposed efficient solution algorithms by exploiting the decoupled nature of the governing equations and the sparsity of the linear algebraic systems resulting from the discretization with the divergence-free basis.  
The methodology presented herein can shed light on developing exact divergence-free spectral-element method for MHD equations in complex geometries, which we shall report in a separate work. It is also worth to explore the possibility to develop unconditionally energy-stable schemes for higher order temporal discretization (order larger than 2) with the help of the proposed divergence-free spectral bases. 
\vskip 10pt

\noindent\underline{\large\bf Acknowledgments}
\vskip 6pt
L. Qin and Z. Yang acknowledge the support from the National Natural Science Foundation of China (No. 12101399), the Shanghai Sailing Program (No. 21YF1421000), and the Strategic Priority Research Program of Chinese Academy of Sciences grant (No. XDA25010402) and the Key Laboratory of Scientific and Engineering Computing (Ministry of Education). H. Li acknowledges the support from the National Natural Science Foundation of China (Nos. 11871455 and 11971016).

\begin{appendix}

\section{  Proof of Proposition  \ref{prop: p3} }\label{ap: b}
\renewcommand{\theequation}{A.\arabic{equation}}
With the help of the de Rham complex \eqref{eq: 3dcompl2} in three dimensions, one can start from the conforming approximation $H_N^1(\Lambda^3)$ of $H^1(\Lambda^3)$ 
\begin{equation}\label{eq: 3dHN1}
H_{N}^1(\Lambda^3):={\rm span}\big( \big\{P_{m}^{(-1,-1)}(\xi_1)P_{n}^{(-1,-1)}(\xi_2)P_{l}^{(-1,-1)}(\xi_3),\;\; 0\leq m,n,l\leq N  \big \} \big),
\end{equation}
and the derivative relation in equation \eqref{eq: derivativep}, one arrives at the conforming approximation space $\bs H_N({\rm div},\Lambda^3)$ of $\bs H({\rm div},\Lambda^3)$:
\begin{equation}\label{eq: hdiv3d}
\begin{aligned}
\bs H_N({\rm div},\Lambda^3)={\rm span}\Big(& \Big\{ P_{m}^{(-1,-1)}(\xi_1)L_{n}(\xi_2)L_{l}(\xi_3)\bs e_1,\;\; 0\leq m\leq N,\; 0\leq n,l\leq N-1\Big\},\\
&\Big\{ L_{m}(\xi_1)P_{n}^{(-1,-1)}(\xi_2)L_{l}(\xi_3)\bs e_2,\;\; 0\leq m,l\leq N-1, \; 0\leq n\leq N \Big\}, \\
&\Big\{ L_{m}(\xi_1)L_{n}(\xi_2)P_{l}^{(-1,-1)}(\xi_3)\bs e_3,\;\; 0\leq m,n\leq N-1,\; 0\leq l\leq N \Big\} \Big).
\end{aligned}
\end{equation}
Once again, we impose the essential boundary condition and decompose $\bs H_N({\rm div},\Lambda^3)$ in terms of the interior modes, face modes, edge modes and vertex modes as follows:
\begin{equation*}
\begin{aligned}
& \bs H_N({\rm div},\Lambda^3)\cap \bs H_0({\rm div},\Lambda^3)={\rm span}\Big( \\
& \Big\{ P_{m+1}^{(-1,-1)}(\xi_1)L_{n}(\xi_2)L_{l}(\xi_3)\bs e_1,\;L_{m}(\xi_1)P_{n+1}^{(-1,-1)}(\xi_2)L_{l}(\xi_3)\bs e_2, \;\;L_{m}(\xi_1)L_{n}(\xi_2)P_{l+1}^{(-1,-1)}(\xi_3)\bs e_3\Big\}_{m,n,l=1}^{N-1},\\
                        & \Big\{ P_{n+1}^{(-1,-1)}(\xi_2) L_l(\xi_3)\bs e_2,\;\; L_n(\xi_2) P_{l+1}^{(-1,-1)}(\xi_3)\bs e_3 \Big\}_{n,l=1}^{N-1}, \\
                        & \Big\{ P_{m+1}^{(-1,-1)}(\xi_1) L_l(\xi_3)\bs e_1,\;\; L_m(\xi_1) P_{l+1}^{(-1,-1)}(\xi_3) \bs e_3 \Big\}_{m,l=1}^{N-1}, \\
                        & \Big\{ P_{m+1}^{(-1,-1)}(\xi_1) L_n(\xi_2)\bs e_1,\;\; L_m(\xi_1) P_{n+1}^{(-1,-1)}(\xi_2)\bs e_2    \Big\}_{m,n=1}^{N-1}, \\
                        & \Big\{  P_{m+1}^{(-1,-1)}(\xi_1)\bs e_1 \Big\}_{m=1}^{N-1},\;\;  \Big\{  P_{n+1}^{(-1,-1)}(\xi_2)\bs e_2 \Big\}_{n=1}^{N-1},\;\; \Big\{ P_{l+1}^{(-1,-1)}(\xi_3)\bs e_3 \Big\}_{l=1}^{N-1} \Big).
\end{aligned}
\end{equation*}
Expanding a function $\bs v(\bs \xi)$ in $\bs H_N({\rm div},\Lambda^3)\cap \bs H_0({\rm div},\Lambda^3)$, utilizing the derivative property \eqref{eq: derivativep} and imposing divergence-free condition, one has
\begin{equation}\label{eq:3dBexp}
\begin{aligned}
\bs v(\bs \xi)&=\sum_{m,n,l=1}^{N-1} \Big(  \hat v_{m,n,l}^1P_{m+1}^{(-1,-1)}(\xi_1)L_{n}(\xi_2)L_{l}(\xi_3)\bs e_1+ \hat v_{m,n,l}^2  L_{m}(\xi_1)P_{n+1}^{(-1,-1)}(\xi_2)L_{l}(\xi_3)\bs e_2 \\
& \qquad \qquad \qquad+\hat v_{m,n,l}^3 L_{m}(\xi_1)L_{n}(\xi_2)P_{l+1}^{(-1,-1)}(\xi_3)\bs e_3  \Big)\\
&+\sum_{n,l=1}^{N-1} \Big(   \hat v_{fx,n,l}^1 P_{n+1}^{(-1,-1)}(\xi_2) L_l(\xi_3) \bs e_2 +\hat v_{fx,n,l}^2 L_n(\xi_2) P_{l+1}^{(-1,-1)}(\xi_3)\bs e_3  \Big) \\
&+\sum_{m,l=1}^{N-1} \Big(   \hat v_{fy,m,l}^1 P_{m+1}^{(-1,-1)}(\xi_1) L_l(\xi_3) \bs e_1 +\hat v_{fy,m,l}^2 L_m(\xi_1) P_{l+1}^{(-1,-1)}(\xi_3)\bs e_3  \Big)\\
&+\sum_{m,n=1}^{N-1} \Big(   \hat v_{fz,m,n}^1 P_{m+1}^{(-1,-1)}(\xi_1) L_n(\xi_2) \bs e_1 +\hat v_{fz,m,n}^2 L_m(\xi_1) P_{n+1}^{(-1,-1)}(\xi_2)\bs e_2  \Big)\\
&+\sum_{m=1}^{N-1} \hat v_{e,m}^1 P_{m+1}^{(-1,-1)}(\xi_1)\bs e_1 +\sum_{n=1}^{N-1} \hat v_{e,n}^2 P_{n+1}^{(-1,-1)}(\xi_2)\bs e_2 +\sum_{l=1}^{N-1} \hat v_{e,l}^3 P_{l+1}^{(-1,-1)}(\xi_3)\bs e_3 ,
\end{aligned}
\end{equation}
and
\begin{equation}
\begin{aligned}
\nabla \cdot \bs v(\bs \xi)&=\frac{1}{2} \sum_{m,n,l=1}^{N-1}\big( m\hat v_{m,n,l}^1+ n\hat v_{m,n,l}^2+l\hat v_{m,n,l}^3 \big)L_{m}(\xi_1)L_{n}(\xi_2)L_{l}(\xi_3)\\
&+\frac{1}{2} \sum_{n,l=1}^{N-1} \big(n \hat v_{fx,n,l}^1 +l \hat v_{fx,n,l}^2 \big) L_n(\xi_2) L_l(\xi_3)\\
&+\frac{1}{2} \sum_{m,l=1}^{N-1} \big( m \hat v^1_{fy,m,l} +l \hat v^2_{fy,m,l} \big) L_m(\xi_1)L_l(\xi_3)\\
&+\frac{1}{2} \sum_{m,n=1}^{N-1} \big( m \hat v_{fz,m,n}^1 +n \hat v^2_{fz,m,n} \big)L_m(\xi_1) L_n(\xi_2)\\&+\frac{1}{2} \Big( \sum_{m=1}^{N-1} m \hat v_{e,m}^1 L_m(\xi_1)+ \sum_{n=1}^{N-1} n \hat v_{e,n}^2 L_n(\xi_2) +\sum_{l=1}^{N-1}  l \hat v_{e,l}^3 L_l(\xi_3)    \Big)=0.
\end{aligned}
\end{equation}
By the orthogonality of $\{L_m(\xi)\}_{m=0}^{\infty}$ in $L^2$- inner product, one arrives at the following linear system for the expansion coefficients 
\begin{align}
& m\hat v_{m,n,l}^1+ n\hat v_{m,n,l}^2+l\hat v_{m,n,l}^3=0, \quad 1\leq m,n,l\leq N-1, \label{eq: apphdivl1}\\
& n \hat v_{fx,n,l}^1 +l \hat v_{fx,n,l}^2=0,\quad 1\leq n,l \leq N-1,\label{eq: apphdivl2}\\
& m \hat v^1_{fy,m,l} +l \hat v^2_{fy,m,l}=0,\quad 1\leq m,l \leq N-1,\label{eq: apphdivl3}\\
& m \hat v_{fz,m,n}^1 +n \hat v^2_{fz,m,n}=0,\quad 1\leq m,n \leq N-1,\label{eq: apphdivl4}\\
& \hat v_{e,m}^1=\hat v_{e,m}^2=\hat v_{e,m}^3=0,\quad 1\leq m \leq N-1. \label{eq: apphdivl5}
\end{align}
Let us treat equation \eqref{eq: apphdivl1} first.  Since $(m,n,l)^{\intercal}\neq \bs 0\in \mathbb{R}^3$, the null space of $(m,n,l)^{\intercal}$ consists of two non trivial vectors $\bs \zeta_{m,n,l}^1$ and $\bs \zeta_{m,n,l}^2$ in $\mathbb{R}^3$. We properly scale $\bs \zeta_{m,n,l}^1$ and $\bs \zeta_{m,n,l}^2$ for the sake of numerical consideration as
\begin{equation}\label{eq: 3dzeta12}
\begin{aligned}
&\bs \zeta_{m,n,l}^1=\Big(\frac{\sqrt{2(2m+1)}}{m}  \sqrt{\frac{2n+1}{2}}  \sqrt{\frac{2l+1}{2}},-\sqrt{\frac{2m+1}{2}} \frac{\sqrt{2(2n+1)}}{n} \sqrt{\frac{2l+1}{2}},0\Big)^{\intercal},\\
&  \bs \zeta_{m,n,l}^2=\Big( \frac{\sqrt{2(2m+1)}}{m}  \sqrt{\frac{2n+1}{2}}  \sqrt{\frac{2l+1}{2}}, 0,   -\sqrt{\frac{2m+1}{2}} \sqrt{\frac{2n+1}{2}} \frac{\sqrt{2(2l+1)}}{l}  \Big)^{\intercal}.
\end{aligned}
\end{equation}
Thus, one can obtain that
\begin{equation}\label{eq:3dBcoe}
(\hat v_{m,n,l}^1,\hat v_{m,n,l}^2,\hat v_{m,n,l}^3)^{\intercal}=\tilde v_{m,n,l}^1  \bs \zeta_{m,n,l}^1 + \tilde v_{m,n,l}^2  \bs \zeta_{m,n,l}^2.
\end{equation}
Equations \eqref{eq: apphdivl2}-\eqref{eq: apphdivl4} can be treated similarly as the two-dimensional case, i.e. one has
\begin{align}
& \hat v^1_{fx,n,l}=  \frac{\sqrt{2(2n+1)}}{n} \sqrt{\frac{2l+1}{2}} \tilde v_{fx,n,l} , \quad \hat v^2_{fx,n,l}=- \sqrt{\frac{2n+1}{2}} \frac{\sqrt{2(2l+1)}}{l} \tilde v_{fx,n,l},  \label{eq: fa1}\\
& \hat v^1_{fy,m,l}=  \frac{\sqrt{2(2m+1)}}{m} \sqrt{\frac{2l+1}{2}} \tilde v_{fy,m,l},\quad \hat v^2_{fy,m,l}=- \sqrt{\frac{2m+1}{2}} \frac{\sqrt{2(2l+1)}}{l} \tilde v_{fy,m,l} , \label{eq: fa2}\\
& \hat v^1_{fz, m,n}=  \frac{\sqrt{2(2m+1)}}{m} \sqrt{\frac{2n+1}{2}} \tilde v_{fz,m,n},\quad  \hat v^2_{fz, m,n}=- \sqrt{\frac{2m+1}{2}} \frac{\sqrt{2(2n+1)}}{n} \tilde v_{fz,m,n}. \label{eq: fa3}
\end{align}

Inserting equations \eqref{eq:3dBcoe}-\eqref{eq: fa3} back into equation \eqref{eq:3dBexp} and utilizing the notations in equations \eqref{eq:divBb1}-\eqref{eq:hdive3}, one arrives at
\begin{equation*}
\begin{aligned}
\bs v(\bs \xi)=& \sum_{m,n,l=1}^{N-1} \Big\{      \tilde v_{m,n,l}^{1} \bs \Phi_{m,n,l}^1(\bs \xi)+ \tilde v_{m,n,l}^{2} \bs \Phi_{m,n,l}^2(\bs \xi)   \Big\} \\
+&\sum_{n,l=1}^{N-1}  \tilde v_{fx,n,l} \bs \Phi^{x}_{n,l}(\bs \xi) + \sum_{m,l=1}^{N-1}  \tilde v_{fy,m,l} \bs \Phi^{y}_{m,l}(\bs \xi)+ \sum_{m,n=1}^{N-1} \tilde v_{fz,m,n}  \bs \Phi^{z}_{m,n}(\bs \xi),
\end{aligned}
\end{equation*}
which leads to the desired divergence-free basis in Proposition \ref{prop: p3} with the help of Lemma \ref{eq: piola}.

\section{  Proof of Proposition  \ref{prop: prop4} }\label{ap:d}
\renewcommand{\theequation}{B.\arabic{equation}}

The procedure for deriving the divergence-free $\bs H_0^1$-conforming basis is similar with the above appendix, but much more involved. One starts by the conforming approximation $H^2_N(\Lambda^3)$ of $H^2(\Lambda^3)$ 
\begin{equation}\label{eq:h2N}
H^2_N(\Lambda^3)={\rm span}\big( \big\{P_m^{(-2,-2)}(\xi_1)P_n^{(-2,-2)}(\xi_2)P_l^{(-2,-2)}(\xi_3),\;\;0\leq m,n,l\leq N  \big\}  \big)
\end{equation}
and resort to the three-dimensional Stokes complex \eqref{eq: stokescom3d} to obtain $\mathbb{X}_N\in \bs H^1(\Lambda^3)$
\begin{equation*}
\begin{aligned}
\mathbb{X}_N={\rm span}\Big(&  P_{m}^{(-2,-2)}(\xi_1)(P_{n}^{(-2,-2)})'(\xi_2)(P_{l}^{(-2,-2)})'(\xi_3)\bs e_1, \\
& (P_{m}^{(-2,-2)})'(\xi_1)P_{n}^{(-2,-2)}(\xi_2)(P_{l}^{(-2,-2)})'(\xi_3)\bs e_2,  \\
 & (P_{m}^{(-2,-2)})'(\xi_1)(P_{n}^{(-2,-2)})'(\xi_2) P_{l}^{(-2,-2)}(\xi_3)\bs e_3,\quad  0\leq m,n,l\leq N \Big).
\end{aligned}
\end{equation*}
Equivalently, $\mathbb{X}_N$ can be decomposed into discrete subspaces $\mathbb{X}_{\rm in}$, $\big(\mathbb{X}_{\rm fx}, \mathbb{X}_{\rm fy}, \mathbb{X}_{\rm fz}\big)$, $\big(\mathbb{X}_{\rm ex}, \mathbb{X}_{\rm ey}, \mathbb{X}_{\rm ez}\big)$ and $\mathbb{X}_{\rm v}$,  accounting for the interior modes, face modes, edge modes and vertex modes, respectively as follows:
\begin{equation}
\mathbb{X}_N=\mathbb{X}_{\rm in}\oplus \mathbb{X}_{\rm fx}\oplus \mathbb{X}_{\rm fy}\oplus \mathbb{X}_{\rm fz}\oplus \mathbb{X}_{\rm ex} \oplus \mathbb{X}_{\rm ey} \oplus \mathbb{X}_{\rm ez} \oplus \mathbb{X}_{\rm v},
\end{equation}
where
\begin{subequations}
\mathleft
\begin{equation}\label{eq: u3din}
\begin{aligned}
\mathbb{X}_{\rm in}={\rm span}\Big( \Big\{  & P_{m+3}^{(-2,-2)}(\xi_1)P_{n+2}^{(-1,-1)}(\xi_2)P_{l+2}^{(-1,-1)}(\xi_3)\bs e_1, \\
& P_{m+2}^{(-1,-1)}(\xi_1)P_{n+3}^{(-2,-2)}(\xi_2)P_{l+2}^{(-1,-1)}(\xi_3)\bs e_2,  \\
 & P_{m+2}^{(-1,-1)}(\xi_1)P_{n+2}^{(-1,-1)}(\xi_2) P_{l+3}^{(-2,-2)}(\xi_3)\bs e_3  \Big\}_{m,n,l=1}^{N-3} \Big),
\end{aligned}
\end{equation}
\begin{equation}\label{eq: u3dfx}
\begin{aligned}
\mathbb{X}_{\rm fx}={\rm span}\Big( \Big\{ &  p_3(\xi_1)P_{n+2}^{(-1,-1)}(\xi_2)P_{l+2}^{(-1,-1)}(\xi_3)\bs e_1, \;\; p_2(\xi_1) P_{n+3}^{(-2,-2)}(\xi_2) P_{l+2}^{(-1,-1)}(\xi_3)\bs e_2,  \\
 & p_2(\xi_1)P_{n+2}^{(-1,-1)}(\xi_2) P_{l+3}^{(-2,-2)}(\xi_3)\bs e_3  \Big\}_{n,l=1}^{N-3} \Big),
\end{aligned}
\end{equation}
\begin{equation}\label{eq: u3dfy}
\begin{aligned}
\mathbb{X}_{\rm fy}={\rm span}\Big( \Big\{ &P_{m+3}^{(-2,-2)}(\xi_1)  p_2(\xi_2)P_{l+2}^{(-1,-1)}(\xi_3)\bs e_1, \;\; P_{m+2}^{(-1,-1)}(\xi_1) p_3(\xi_2) P_{l+2}^{(-1,-1)}(\xi_3)\bs e_2,  \\
 &P_{m+2}^{(-1,-1)}(\xi_1) p_2(\xi_2)  P_{l+3}^{(-2,-2)}(\xi_3) \bs e_3  \Big\}_{m,l=1}^{N-3} \Big),
\end{aligned}
\end{equation}
\begin{equation}\label{eq: u3dfz}
\begin{aligned}
\mathbb{X}_{\rm fz}={\rm span}\Big( \Big\{ &  P_{m+3}^{(-2,-2)}(\xi_1) P_{n+2}^{(-1,-1)}(\xi_2)p_2(
\xi_3)\bs e_1, \;\;  P_{m+2}^{(-1,-1)}(\xi_1) P_{n+3}^{(-2,-2)}(\xi_2) p_2(\xi_3) \bs e_2,  \\
 & P_{m+2}^{(-1,-1)}(\xi_1) P_{n+2}^{(-1,-1)}(\xi_2)p_3(\xi_3) \bs e_3  \Big\}_{m,n=1}^{N-3} \Big),
\end{aligned}
\end{equation}
\begin{equation}\label{eq: u3dex}
\begin{aligned}
\mathbb{X}_{\rm ex}={\rm span}\Big( \Big\{ &  P_{m+3}^{(-2,-2)}(\xi_1) p_2(\xi_2)p_2(\xi_3)\bs e_1, \;\;  P_{m+2}^{(-1,-1)}(\xi_1) p_3(\xi_2) p_2(\xi_3) \bs e_2,  \\
 & P_{m+2}^{(-1,-1)}(\xi_1) p_2(\xi_2)p_3(\xi_3) \bs e_3  \Big\}_{m=1}^{N-3} \Big),
\end{aligned}
\end{equation}
\begin{equation}\label{eq: u3dey}
\begin{aligned}
\mathbb{X}_{\rm ey}={\rm span}\Big( \Big\{ &  p_3(\xi_1)P_{n+2}^{(-1,-1)}(\xi_2)p_2(\xi_3)\bs e_1, \;\; p_2(\xi_1) P_{n+3}^{(-2,-2)}(\xi_2) p_2(\xi_3)\bs e_2,  \\
 & p_2(\xi_1)P_{n+2}^{(-1,-1)}(\xi_2) p_3(\xi_3)\bs e_3  \Big\}_{n=1}^{N-3} \Big),
\end{aligned}
\end{equation}
\begin{equation}\label{eq: u3dez}
\begin{aligned}
\mathbb{X}_{\rm ez}={\rm span}\Big( \Big\{ &p_3(\xi_1)  p_2(\xi_2)P_{l+2}^{(-1,-1)}(\xi_3)\bs e_1, \;\; p_2(\xi_1) p_3(\xi_2) P_{l+2}^{(-1,-1)}(\xi_3)\bs e_2,  \\
 &p_2(\xi_1) p_2(\xi_2)  P_{l+3}^{(-2,-2)}(\xi_3) \bs e_3  \Big\}_{l=1}^{N-3} \Big),
\end{aligned}
\end{equation}
\begin{equation}\label{eq: u3dv}
\mathbb{X}_{\rm v}={\rm span}\Big( \Big\{ p_3(\xi_1) p_2(\xi_2)p_2(\xi_3)\bs e_1, \;\; p_2(\xi_1) p_3(\xi_2) p_2(\xi_3)\bs e_2,  
 p_2(\xi_1) p_2(\xi_2)  p_3(\xi_3) \bs e_3  \Big\} \Big),
\end{equation}
\end{subequations}
where $p_k(\xi)$ is an arbitrary polynomial in $\mathbb{P}_k$,  with $\mathbb{P}_k$ be the polynomial space with polynomial degree less than or equal to $k$.

Observe that
\begin{equation}\label{eq: c2h01}
\mathbb{P}_2(\Lambda) \cap H_0^1(\Lambda)={\rm span}\big( P_2^{(-1,-1)}(\xi) \big).
\end{equation}
Thus, in order to obtain a divergence-free basis satisfying the Dirichlet boundary condition,  one needs to replace $p_2(\xi)$ with $P_2^{(-1,-1)}(\xi)$. Moreover, from the fact that
\begin{equation}\label{eq: c20}
\{0\}=\big\{f(\xi) |\; f(\xi)\in \mathbb{P}_3(\Lambda) \cap H_0^1(\Lambda) ,\;\; f'(\xi)=P_2^{(-1,-1)}(\xi) \big\},
\end{equation}
the terms involving $p_2$ and $p_3$ could not be solenoidal and satisfy the Dirichlet boundary condition at the same time. With the help of the observations in equations \eqref{eq: c2h01}-\eqref{eq: c20}, one verify that  
\begin{equation*}
\{\bs v(\bs \xi)\in \bs H_0^1(\Lambda^3),\;\; \nabla \cdot \bs v=0 \} \cap \mathbb{X}_{\rm ex} \oplus \mathbb{X}_{\rm ey} \oplus \mathbb{X}_{\rm ez} \oplus \mathbb{X}_{\rm v}=\{ 0\}.
\end{equation*} 
Thus the edge modes and vertex modes will not be included in the desired divergence-free basis. 
Consequently, one can expanding $\bs v(\xi)\in \mathbb{X}_N \cap \big\{ \bs v\in \bs H_0^1(\Lambda^3),\;\; \nabla \cdot \bs v=0 \big\}$ and evaluate its divergence using the derivation relation \eqref{eq: derivativep}  as follows:
\begin{equation}\label{eq: u3ddivexp}
\begin{aligned}
\bs v(\bs \xi)=\sum_{m,n,l=1}^{N-3} \Big( &  \hat v_{m,n,l}^1 P_{m+3}^{(-2,-2)}(\xi_1)P_{n+2}^{(-1,-1)}(\xi_2)P_{l+2}^{(-1,-1)}(\xi_3)\bs e_1\\
+&\hat v_{m,n,l}^2 P_{m+2}^{(-1,-1)}(\xi_1)P_{n+3}^{(-2,-2)}(\xi_2)P_{l+2}^{(-1,-1)}(\xi_3)\bs e_2\\
+&\hat v_{m,n,l}^3 P_{m+2}^{(-1,-1)}(\xi_1)P_{n+2}^{(-1,-1)}(\xi_2) P_{l+3}^{(-2,-2)}(\xi_3)\bs e_3\Big)\\
+\sum_{n,l=1}^{N-3}\Big(& \hat v_{fx,n,l}^1 P_2^{(-1,-1)}(\xi_1) P_{n+3}^{(-2,-2)}(\xi_2) P_{l+2}^{(-1,-1)}(\xi_3)\bs e_2\\
+& \hat v_{fx,n,l}^2 P_2^{(-1,-1)}(\xi_1)P_{n+2}^{(-1,-1)}(\xi_2) P_{l+3}^{(-2,-2)}(\xi_3)\bs e_3   \Big)\\
+\sum_{m,l=1}^{N-3}\Big(& \hat v_{fy,m,l}^1 P_{m+3}^{(-2,-2)}(\xi_1)  P_2^{(-1,-1)}(\xi_2)P_{l+2}^{(-1,-1)}(\xi_3)\bs e_1     \\
+& \hat v_{fy,m,l}^2  P_{m+2}^{(-1,-1)}(\xi_1) P_2^{(-1,-1)}(\xi_2)  P_{l+3}^{(-2,-2)}(\xi_3) \bs e_3     \Big)\\
+\sum_{n,l=1}^{N-3}\Big(& \hat v_{fz,n,l}^1  P_{m+3}^{(-2,-2)}(\xi_1) P_{n+2}^{(-1,-1)}(\xi_2)P_2^{(-1,-1)}(\xi_3)\bs e_1      \\
+& \hat v_{fz,n,l}^2  P_{m+2}^{(-1,-1)}(\xi_1) P_{n+3}^{(-2,-2)}(\xi_2) P_2^{(-1,-1)}(\xi_3) \bs e_2     \Big)
\end{aligned}
\end{equation} 
and
\begin{equation}
\begin{aligned}
\nabla \cdot \bs v(\bs \xi)&=\frac{1}{2}\sum_{m,n,l=1}^{N-3} \big( m\hat v_{m,n,l}^1 +n \hat v_{m,n,l}^2+ l\hat v_{m,n,l}^3  \big)P_{m+2}^{(-1,-1)}(\xi_1)P_{n+2}^{(-1,-1)}(\xi_2)P_{l+2}^{(-1,-1)}(\xi_3)\\
&+\frac{1}{2}\sum_{n,l=1}^{N-3} \big( n\hat v_{fx,n,l}^1+l \hat v_{fx,n,l}^2 \big)P_2^{(-1,-1)}(\xi_1) P_{n+2}^{(-1,-1)}(\xi_2) P_{l+2}^{(-1,-1)}(\xi_3)\\
&+\frac{1}{2}\sum_{m,l=1}^{N-3} \big( m\hat v_{fy,m,l}^1+l \hat v_{fy,m,l}^2 \big)  P_{m+2}^{(-1,-1)}(\xi_1)  P_2^{(-1,-1)}(\xi_2)P_{l+2}^{(-1,-1)}(\xi_3)\\
&+\frac{1}{2}\sum_{m,n=1}^{N-3} \big( m\hat v_{fz,m,n}^1+n \hat v_{fz,m,n}^2 \big)  P_{m+2}^{(-1,-1)}(\xi_1) P_{n+2}^{(-1,-1)}(\xi_2)P_2^{(-1,-1)}(\xi_3).
\end{aligned}
\end{equation}
Following the same procedure, one can utilize the property that $\{P^{(-1,-1)}_m(\xi)\}_{m=4}^{\infty}$ is orthogonal w.r.t the singular weight function $(1-\xi)^{-1}(1+\xi)^{-1}$ on $\Lambda$ and the fact that $\nabla \cdot \bs u=0$ to arrive at a linear system of expansion coefficients as in equations \eqref{eq: apphdivl1}-\eqref{eq: apphdivl4}. Thus, with the same derivation, we arrives at the desired divergence free basis in Proposition \ref{prop: prop4}.

\section{Backward differentiation formulas}\label{ap: bdfk}
\renewcommand{\theequation}{C.\arabic{equation}}

The $k$-step backward differentiation formula is give by
\begin{equation}\label{eq: BDFgeneral}
\frac{\partial \chi}{\partial t}\Big|_{t=t_{n+1}}=\frac{{\gamma \chi^{n+1}-\hat \chi}}{\tau},
\end{equation}
where
\begin{equation}\label{eq: BDF3}
\begin{aligned}
& \hat \chi=
\begin{cases}
 3\chi^n-\dfrac{3}{2} \chi^{n-1}+\dfrac{1}{3} \chi^{n-2}, & k=3\\[5pt]
 4\chi^n-3\chi^{n-1}+\dfrac{4}{3}\chi^{n-2}-\dfrac{1}{4}\chi^{n-3}, & k=4 \\[5pt]
 5\chi^n-5\chi^{n-1}+\dfrac{10}{3}\chi^{n-2} -\dfrac{15}{12}\chi^{n-3}+\dfrac{1}{5}\chi^{n-4}, & k=5\\
 6\chi^n-\dfrac{45}{6}\chi^{n-1}+\dfrac{20}{3}\chi^{n-2} -\dfrac{45}{12}\chi^{n-3}+\dfrac{6}{5}\chi^{n-4}-\dfrac{1}{6}\chi^{n-5}, & k=6
\end{cases},\quad 
\gamma=
\begin{cases}
\dfrac{11}{6}, & k=3\\[5pt]
\dfrac{25}{12}, & k=4\\[5pt]
\dfrac{137}{60}, & k=5\\[5pt]
\dfrac{147}{60}, & k=6
\end{cases}\,, \\
\end{aligned}
\end{equation}
and the $k$-th order explicit approximation of $\chi$ at time $t_{n+1}$ takes the form 
\begin{equation}\label{eq: app3}
\tilde{\chi}^{n+1}=
\begin{cases}
 3\chi^n-3 \chi^{n-1}+  \chi^{n-2}, & k=3\\
 4\chi^n-6 \chi^{n-1}+4 \chi^{n-2}-  \chi^{n-3}, & k=4\\
 5\chi^n-10\chi^{n-1}+10\chi^{n-2}- 5\chi^{n-3}+ \chi^{n-4}, & k=5\\
 6\chi^n-15\chi^{n-1}+20\chi^{n-2}-15\chi^{n-3}+6\chi^{n-4}-  \chi^{n-5}, & k=6
\end{cases}\;.
\end{equation}

\end{appendix}

\bibliography{refpapers}

\end{document}